\newcommand{\optional}[1]{\relax}
\begin{document}

\copyrightinfo{~2010}{Matthias Lesch}

\title[$\Psi$DO and Regularized Traces]
{Pseudodifferential Operators and Regularized Traces} 

\author{Matthias Lesch}
\address{Mathematisches Institut,
Universit\"at Bonn, Beringstr. 6, D-53115 Bonn, Germany}
\email{ml@matthiaslesch.de, lesch@math.uni-bonn.de}
\urladdr{www.matthiaslesch.de, www.math.uni-bonn.de/people/lesch}

\thanks{The author gratefully acknowledges the hospitality of the
Department of Mathematics at The University of Colorado at Boulder
where this paper was written.
\\The author was partially supported by 
the Hausdorff Center for Mathematics (Bonn). }

\subjclass[2000]{Primary 58J42; Secondary 58J40, 58J35, 58B34}

\keywords{pseudodifferential operator, noncommutative residue, Dixmier
trace}


\begin{abstract}
This is a survey on trace constructions on various operator
algebras with an emphasis on regularized traces on algebras of
pseudodifferential operators. 
For motivation our point of departure is the classical Hilbert space
trace which is the unique semifinite \emph{normal} trace on the algebra
of bounded operators on a separable Hilbert space. Dropping the normality
assumption leads to the celebrated Dixmier traces.

Then we give a leisurely introduction to pseudodifferential operators. 
The parameter dependent calculus is emphasized and
it is shown how this calculus leads naturally to the asymptotic expansion of the resolvent trace
of an elliptic differential operator. 

The Hadamard partie finie regularization of an integral is explained and
used to extend the Hilbert space trace 
to the Kontsevich-Vishik canonical trace on pseudodifferential operators
of non--integral order.

Then the stage is well prepared for the residue trace of Wodzicki-Guillemin
and its purely functional analytic interpretation as a Dixmier trace
by Alain Connes.

We also discuss existence and uniqueness of traces for the algebra of parameter dependent 
pseudodifferential operators; the results are surprisingly different.

Finally, we will discuss the analogue of the regularized traces on the
symbolic level and study the de Rham cohomology of $\R^n$ with coefficients
being symbol functions. This generalizes a recent result of S.
Paycha concerning the characterization of the Hadamard partie finie
integral and the residue integral in light of the Stokes property. 
\end{abstract}
\maketitle
\tableofcontents
\listoffigures
\section{Introduction} 
Traces on an algebra are important linear functionals which come up in
various incarnations in various branches of mathematics, e.g. group
characters, norm and trace in field extensions, many trace formulas,
to mention just a few. 

On a separable Hilbert space $\cH$ there is a canonical trace (tracial
weight, see Section \plref{s:HST}) $\Tr$ defined on non--negative
operators by
\begin{equation}
    \Tr(T):=\sum_{j=0}^\infty \scalar{T e_j}{e_j},
\end{equation}
where $(e_j)_{j\ge 0}$ is an orthonormal basis.
This is the unique semifinite normal trace on the algebra $\cB(\cH)$ of
bounded operators on $\cH$.
In the 1930's \textsc{Murray} and \textsc{von Neumann} \cite{MurNeu:ROI}, \cite{MurNeu:ROII},
\cite{Neu:ROIII}, \cite{MurNeu:ROIV} studied traces on weakly closed
$*$--subalgebras (now known as von Neumann algebras) of $\cB(\cH)$.
They showed that on a von Neumann \emph{factor} there is up to a
normalization a unique semifinite normal trace.

\begin{sloppy}
\textsc{Guillemin} \cite{Gui:NPW} and \textsc{Wodzicki} 
\cite{Wod:LIS}, \cite{Wod:NRF} discovered independently
that a similar uniqueness statement holds for the algebra
of pseudodifferential operators on a compact manifold. The \emph{residue
trace}, however, has nothing to do with the Hilbert space trace: it vanishes
on trace class operators.
\end{sloppy}

In the 60s \textsc{Dixmier} \cite{Dix:ETN} had already proved that the uniqueness
statement for the Hilbert space trace fails if one gives up the assumption
that the trace is normal. 

In the late 80's and early 90's then the Dixmier trace had a 
celebrated comeback when \textsc{Alain Connes} \cite{Con:AFN}
proved that in important cases the residue trace coincides with a Dixmier trace.

The aim of this note is to survey some of these results. We will not touch
von Neumann algebras, however, any further.

The paper is organized as follows:

In Section \plref{s:HST} our point of departure is the classical Hilbert space
trace. We give a short proof that it is up to a factor
the unique normal tracial weight on the algebra $\cB(\cH)$
of bounded operators on a separable Hilbert space $\cH$.

Then we reproduce Dixmier's very elegant construction which
shows that non--normal tracial weights are abundant. We do
confine ourselves however to those Dixmier traces which will
later turn out to be related to the residue trace.

Section \plref{s:POP} presents the basic calculus of
pseudodifferential operators with parameter on a closed manifold.

In Section \plref{s:EHS} we pause the discussion of pseudodifferential operators
and look at the problem of extending the Hilbert space trace to
pseudodifferential operators of higher order. A pseudodifferential operator
$A$ of order $<-\dim M$ on a closed manifold $M$ is of trace class
and its trace is given by integrating is Schwartz kernel $k_A(x,y)$ over
the diagonal
\begin{equation}\label{intro-2}
\Tr(A)=\int_M k_A(x,x) dx.
\end{equation}
We will show that the classical Hadamard partie finie regularization of
integrals allows to extend Eq. \eqref{intro-2} to all pseudodifferential
operators of non--integral order. This is the celebrated Kontsevich-Vishik
canonical trace.

Section \plref{s:POPAE} on asymptotic analysis 
then shows how the parameter dependent pseudodifferential
calculus leads naturally to the asymptotic expansion of the resolvent trace
of an elliptic differential operator. For the resolvent of elliptic pseudodifferential operators a refinement, 
due to Grubb and Seeley, of the parametric calculus is necessary. Without going into the details of this refined calculus
we will explain why additional $\log \lambda$ terms appear in the asymptotic expansion of $\Tr(B (P-\gl)^{-N})$ if $B$ or
$P$ are pseudodifferential rather than differential operators. These $\log \lambda$ terms are at the heart of
the noncommutative residue trace.
The straightforward relations between the resolvent expansion, the heat
trace expansion and the meromorphic continuation of the $\zeta$--function, which are
based on the Mellin transform respectively a contour integral method, are
also briefly discussed.

In Section \plref{s:RT} we state the main result about the existence and
uniqueness of the residue trace. We present it in a slightly generalized form
due to the author for $\log$--polyhomogeneous pseudodifferential operators. 
A formula for the relation between the residue trace of a power of the
Laplacian and the Einstein--Hilbert action due to \textsc{Kalau--Walze} \cite{KalWal:GNC}
and \textsc{Kastler} \cite{Kas:DOG} is proved in an example.

Then we give a proof of Connes' Trace Theorem which states that on
pseudodifferential operators of order minus $\dim M$ on a closed manifold $M$
the residue trace is proportional to the Dixmier trace. 

Having seen the significance of the parameter dependent calculus 
it is natural to ask whether the algebras of parameter dependent
pseudodifferential operators have an analogue of the residue trace. 
Somewhat surprisingly the results for these algebras are quite different:
there are many traces on this algebra, however, there is a unique symbol--valued trace from which many other traces
can be derived. This result resembles very much the 
center valued trace in von Neumann algebra theory.
Furthermore, in contrast to the non--parametric case the
$L^2$--Hilbert space trace extends to a trace on the whole algebra. 
This part of the paper surveys results from a joint paper with
\textsc{Markus J. Pflaum} \cite{LesPfl:TAP}.

Finally, in the short Section \plref{s:DFC} we will discuss 
the analogue of the regularized traces on the
symbolic level and announce a generalization of a recent result of S.
Paycha concerning the characterization of the Hadamard partie finie
integral and the residue integral in light of the Stokes property. 
The result presented here allows one to calculate de Rham cohomology 
groups of forms on $\R^n$ whose coefficients lie in a certain symbol space.
We will show that both the Hadamard partie finie integral and the residue
integral provide an integration along the fiber on the cone 
$\R_+^*\times M$ and as a consequence there is an analogue of
the Thom isomorphism.

\textsc{Acknowledgments.} 
I would like to thank the organizers of the conference on Motives, Quantum
Field Theory and Pseudodifferential Operators for inviting me to contribute these notes.
Also I would like to thank the anonymous referee
for taking his job very seriously and for making very detailed remarks
on how to improve the paper. I think the paper has benefited considerably
from those remarks.
\section{The Hilbert space trace (tracial weight)}
\label{s:HST}

\subsection{Basic definitions}
Let $\cH$ be a separable Hilbert space. Denote by $\cB(\cH)$ the algebra
of bounded operators on $\cH$. Let $\cA$ be a $C^*$--subalgebra, that is,
a norm closed self--adjoint ($a\in\cA\Rightarrow a^*\in\cA$) subalgebra.
It follows that $\cA$ is invariant under continuous functional calculus,
e.g. if $a\in\cA$ is non--negative then $\sqrt{a}\in\cA$.

Denote by $\cA_+\subset \cA$ the set of non--negative elements.
$\cA_+$ is a cone in the following sense:
\begin{enumerate}
\item $T\in\cA_+, \gl\in\R_+ \Rightarrow \gl T\in\cA_+,$
\item $S,T\in\cA_+, \gl,\mu\in\R_+\Rightarrow \gl S+\mu T\in\cA_+$.
\end{enumerate}

A \emph{weight} on $\cA$ is a map
\begin{equation}
\tau: \cA_+\longrightarrow \R_+\cup \{\infty\},\quad \R_+:=[0,\infty),
\end{equation}
such that 
\begin{equation}\label{eq:def-weight}
\tau (\gl S+\mu T)= \gl \tau(S)+\mu \tau(T),\quad \gl,\mu\ge 0,\;
S,T\in\cA_+.
\end{equation}
A weight is called \emph{tracial} if
\begin{equation}\label{eq:def-tracial-weight}
  \tau(TT^*)=\tau(T^* T), \quad T\in\cA_+.
\end{equation}

It follows from \eqref{eq:def-tracial-weight} that for a unitary $U\in\cA$ and $T\in\cA_+$
\begin{equation}
\begin{split}
    \tau(UTU^*)=
    \tau((UT^{1/2})(UT^{1/2})^*)=\tau((UT^{1/2})^*(UT^{1/2}))=\tau(T). 
\end{split}
\end{equation}
\eqref{eq:def-weight} implies that $\tau$ is monotone in the sense that if $0\le S\le T$ then 
\begin{equation}
  \tau(T)=\tau(S)+\tau(T-S)\ge \tau(S).
\end{equation}

\begin{remark}
In the literature tracial weights are often just called traces.
We adopt here the convention of \textsc{Kadison} and \textsc{Ringrose}
\cite[Chap. 8]{KadRin:FTOII}. 

We reserve the word trace for a linear functional $\tau:\cR\longrightarrow \C$ 
on a $\C$--algebra $\cR$ which satisfies $\tau(AB)=\tau(BA)$ for 
$A,B\in\cR$.
A priori a tracial weight $\tau$ is only defined on the positive cone
of $\cA$ and it may take the value $\infty$. Below we will see that
there is a natural ideal in $\cA$ on which $\tau$ is a trace.
\end{remark}

\subsubsection{The canonical tracial weight on bounded operators on a
Hilbert space}
Let $(e_j)_{j\in \Z_+}$ be an orthonormal basis of the Hilbert space $\cH$;
$\Z_+:=\{0,1,2,\ldots\}$. For $T\in\cB_+(\cH)$
put
\begin{equation}\label{eq:def-tr}
    \Tr(T):=\sum_{j=0}^\infty \scalar{T e_j}{e_j}.
\end{equation}
$\Tr(T)$ is indeed independent of the choice of the orthonormal basis
and it is a tracial weight on $\cB(\cH)$ (\textsc{Pedersen} \cite[Sec.
3.4]{Ped:AN}).

\subsubsection{Trace ideals}\label{ss:trace-ideals} We return to the general set--up of 
a tracial weight on a $C^*$--subalgebra $\cA\subset\cB(\cH)$. 
Put
\begin{equation}\label{eq:trace-ideal-1}
\cL_+^1(\cA,\tau):=\bigsetdef{T\in\cA_+}{ \tau(T)<\infty}
\end{equation}
and denote by $\cL^1(\cA,\tau)$ the linear span of $\cL^1_+(\cA,\tau)$.
Furthermore, let
\begin{equation}\label{eq:trace-ideal-2}
\cL^2(\cA,\tau):=\bigsetdef{T\in\cA}{\tau(T^*T)<\infty}.
\end{equation}

Using the inequality
\begin{equation}\label{eq:20090508-3}
\begin{split}
  (S+T)^*(S+T)&\le (S+T)^*(S+T)+(S-T)^*(S-T)\\
              &=2 (S^*S+T^*T)
\end{split}
\end{equation}
and the polarization identity
\begin{equation}\label{eq:polarization}
   4 T^* S=\sum_{k=0}^3 i^k (S+i^k T)^*(S+ i^k T)
\end{equation}
one proves exactly as for the tracial weight $\Tr$ in \cite[Sec. 3.4]{Ped:AN}:

\begin{prop}\label{p:20090511-1} $\cL^1(\cA,\tau)$ and $\cL^2(\cA,\tau)$ are two--sided
self--adjoint ideals in $\cA$. 

Moreover for $T,S\in \cL^2(\cA,\tau)$ one has $TS,ST\in\cL^1(\cA,\tau)$
and 
\[
\tau(ST)=\tau(TS).\]
The same formula holds for $T\in\cL^1(\cA,\tau)$ and $S\in\cB(\cH)$.

In particular $\tau\restriction \cL^p(\cA,\tau), p=1,2,$ is a trace.
\end{prop}

\subsection{Uniqueness of $\Tr$ on $\cB(\cH)$}

As for finite--dimensional matrix algebras one now shows
that up to a normalization there is a unique trace on the ideal
of finite rank operators.

\begin{lemma}\label{l:Tr-uniqueness-FR} Let $\FRH$ be the ideal of finite rank operators on $\cH$.
Any trace $\tau:\FRH\longrightarrow \C$ is proportional to
$\Tr\restriction\FRH$.
\end{lemma}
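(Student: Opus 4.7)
The plan is to mimic the classical $M_n(\C)$ uniqueness argument via matrix units and then reduce an arbitrary finite rank operator to the finite-dimensional situation by a unitary conjugation.

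Fix an orthonormal basis $(e_j)_{j\in\Z_+}$ of $\cH$ and introduce the matrix units $E_{ij}\in\FRH$ defined by $E_{ij}x:=\scalar{x}{e_j}\,e_i$; these satisfy $E_{ij}E_{kl}=\delta_{jk}E_{il}$. Two short computations pin down $\tau$ on each $E_{ij}$. For $i\ne j$ the relations $E_{ii}E_{ij}=E_{ij}$ and $E_{ij}E_{ii}=0$ give
\begin{equation*}
\tau(E_{ij})=\tau(E_{ii}E_{ij})=\tau(E_{ij}E_{ii})=0,
\end{equation*}
while for any $i,j$ the identities $E_{ii}=E_{ij}E_{ji}$ and $E_{jj}=E_{ji}E_{ij}$ force $\tau(E_{ii})=\tau(E_{jj})$, so all $\tau(E_{ii})$ share a common value $c$. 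By linearity, $\tau(S)=c\,\Tr(S)$ for every $S$ supported on $V_m=\mathrm{span}(e_0,\ldots,e_{m-1})$, since each such $S$ has a finite expansion $\sum_{0\le i,j<m}s_{ij}E_{ij}$ with $s_{ii}=\scalar{Se_i}{e_i}$.

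To promote this to arbitrary $T\in\FRH$, observe that $W:=(\ker T)^\perp+R(T)$ is finite-dimensional (both summands are, since $R(T^{*})=(\ker T)^\perp$ has the same finite dimension as $R(T)$). Choose an orthonormal basis of $W$, extend it to an orthonormal basis $(f_j)_{j\ge 1}$ of $\cH$, and let $U$ be the unitary determined by $Uf_j:=e_{j-1}$. Then $UTU^{*}$ is supported on $V_m$ with $m=\dim W$, so the previous step yields $\tau(UTU^{*})=c\,\Tr(UTU^{*})$. The trace property, applied to $\tau((UT)U^{*})=\tau(U^{*}(UT))$, gives $\tau(UTU^{*})=\tau(T)$, and the same identity holds for $\Tr$, so $\tau(T)=c\,\Tr(T)$ as required.

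The only step requiring real care is this unitary reduction, which rests on the finite-dimensionality of $W$; everything else is a direct transcription of the familiar proof that every linear trace on $M_n(\C)$ is a scalar multiple of the standard matrix trace.
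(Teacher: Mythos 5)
Your route is genuinely different from the paper's: the paper shows directly that $\tau$ is constant on rank-one orthogonal projections (by exhibiting a $T$ with $T^*T=P$, $TT^*=Q$) and then invokes the spectral decomposition of self-adjoint finite-rank operators, whereas you set up matrix units in a fixed basis and transport a general $T$ to the span of $e_0,\dots,e_{m-1}$ by a unitary. Both are reasonable incarnations of the $M_n(\C)$ argument; the paper's version stays entirely inside $\FRH$ and needs no coordinates, while yours is more hands-on.

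However, there is a gap in the unitary-conjugation step. The paper's definition of a trace on an algebra $\cR$ requires $\tau(AB)=\tau(BA)$ only for $A,B\in\cR$. When you write $\tau\bigl((UT)U^{*}\bigr)=\tau\bigl(U^{*}(UT)\bigr)$ you are applying this with $A=UT\in\FRH$ but $B=U^{*}$, which is a unitary on an infinite-dimensional space and hence not in $\FRH$. This identity is in fact true, but it is not an instance of the hypothesis and needs its own argument. Two clean fixes are available. (i) Replace $U$ by the finite-rank partial isometry $V$ that agrees with $U$ on $\operatorname{span}(f_1,\dots,f_m)$ and vanishes on its orthogonal complement; then $VTV^{*}=UTU^{*}$, $V^{*}VT=T$, and all products involved lie in $\FRH$, so $\tau(VTV^{*})=\tau(V^{*}VT)=\tau(T)$ is legitimate. (ii) Prove the auxiliary fact that a trace on the ideal $\FRH$ automatically satisfies $\tau(AB)=\tau(BA)$ for $A\in\FRH$ and $B\in\cB(\cH)$: choose a finite-rank orthogonal projection $P$ with $PA=A=AP$; then
\begin{equation*}
\tau(AB)=\tau\bigl(A(PB)\bigr)=\tau\bigl((PB)A\bigr)=\tau\bigl(P(BA)\bigr)=\tau\bigl((BA)P\bigr)=\tau\bigl(B(AP)\bigr)=\tau(BA),
\end{equation*}
each step using the trace identity for two genuinely finite-rank factors. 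With either fix your argument becomes complete; as written, the conjugation-invariance of $\tau$ is asserted rather than derived from the stated hypotheses.
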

\begin{proof}
Let $P,Q\in\cB(\cH)$ be rank one orthogonal projections. 
Choose $v\in\im P, w\in \im Q$ with $\|v\|=\|w\|=1$ and put
\begin{equation}
     T:= \scalar{v}{\cdot}\; w.
\end{equation}
Then $T\in\FRH$ and $T^*T=P, TT^*=Q$. Consequently $\tau$
takes the same value $\gl_\tau\ge 0$ on all orthogonal projections of
rank one.

If $T\in\FRH$ is self--adjoint then $T=\sum_{j=1}^N \mu_j P_j$
with rank one orthogonal projections $P_j$. Thus
\begin{equation}
    \tau(T)=\gl_\tau \sum_{j=1}^N\mu_j=\gl_\tau \Tr(T).
\end{equation}
Since each $T\in\FRH$ is a linear combination of self--adjoint
elements of $\FRH$ we reach the conclusion.
\end{proof}

The properties of $\Tr$ we have mentioned so far are not sufficient
to show that a tracial weight on $\cB(\cH)$ is proportional to $\Tr$.
The property which implies this is \emph{normality}:

\begin{prop}\label{p:tr-normal}
\textup{1.} $\Tr$ is \emph{normal}, that is, if $(T_n)_{n\in\Z_+}\subset\cB_+(\cH)$ is
an increasing sequence with $T_n\to T\in\cB_+(\cH)$ strongly then
$\Tr(T)=\sup_{n\in\Z_+} \Tr(T_n)$.

\textup{2.} Let $\tau$ be a normal tracial weight on $\cB(\cH)$. Then there
is a constant $\gl_\tau\in \R_+\cup \{\infty\}$ such that for $T\in\cB_+(\cH)$
we have $\tau(T)=\gl_\tau \Tr(T)$.
\end{prop}
\begin{remark} In the somewhat pathological case $\gl=\infty$ the tracial
weight $\tau_\infty$ is given by
\begin{equation}
    \tau_\infty(T)=\begin{cases} \infty,& T\in\cB_+(\cH)\setminus \{0\},\\
                                 0,& T=0.
				 \end{cases}
\end{equation}
In all other cases $\tau$ is \emph{semifinite}, that means for
$T\in\cB_+(\cH)$ there is an increasing sequence $(T_n)_{n\in \Z_+}$
with $\tau(T_n)<\infty$ and $T_n\nearrow T$ strongly. 
Here, $T_n$ may be chosen of finite rank.
\end{remark}
\begin{proof}
1. Let $(e_k)_{k\in\Z_+}$ be an orthonormal basis of $\cH$. Since $T_n \to
T$ strongly we have $\scalar{T_n e_k}{e_k}\nearrow \scalar{T e_k}{e_k}$.
The Monotone Convergence Theorem for the counting measure on $\Z_+$ then
implies
\begin{equation}
  \Tr(T)=\sum_{k=0}^ \infty \scalar{Te_k}{e_k}=\sup_{n\in\Z_+} \sum_{k=0}^\infty
\scalar{T_ne_k}{e_k}=\sup_{n\in\Z_+} \Tr(T_n).
\end{equation}

2. Let $\tau:\cB_+(\cH)\longrightarrow \R_+\cup \{\infty\}$ be a normal
tracial weight. As in the proof of Lemma \ref{l:Tr-uniqueness-FR} one shows
that $\tau\restriction \FRH=\gl_\tau \Tr\restriction\FRH$ for some
$\gl_\tau\in
\R_+\cup \{\infty\}$.

Choose an increasing sequence of orthogonal projections $(P_n)_{n\in
\Z_+}$, $\rank P_n=n$. Given $T\in\cB_+(\cH)$ the sequence of finite rank operators
$(T^{1/2}P_nT^{1/2})_{n\in\Z_+}$ is increasing and it converges strongly to
$T$. Since $\tau$ is assumed to be normal we thus find
\begin{equation}\begin{split}
\tau(T)&=\sup_{n\in\Z_+}  \tau(T^{1/2}P_nT^{1/2})\\
       &=\sup_{n\in\Z_+}  \gl_\tau \Tr(T^{1/2}P_nT^{1/2})=\gl_\tau\Tr(T).\qedhere
\end{split}
\end{equation}
\end{proof}

\begin{remark}
The uniqueness of the trace $\Tr$ we presented here is in fact a special
case of a rich theory of traces for weakly closed self--adjoint
subalgebras of $\cB(\cH)$ (von Neumann algebras) due to \textsc{Murray}
and \textsc{von Neumann} \cite{MurNeu:ROI}, \cite{MurNeu:ROII},
\cite{Neu:ROIII}, \cite{MurNeu:ROIV}.
\end{remark}

\subsection{The Dixmier Trace}
\label{ss:Dixmier-trace}

In view of Proposition \plref{p:tr-normal} it is natural to ask whether
there exist non--normal tracial weights on $\cB(\cH)$.  
A cheap answer to this question would be to define for $T\in\cB_+(\cH)$
\begin{equation}
   \tau(T):=\begin{cases} \Tr(T),& T\in\FRH,\\
                           \infty, & T\not\in\FRH.
\end{cases}
\end{equation}
Then $\tau$ is certainly a non--trivial non--normal tracial weight on
$\cB(\cH)$. 

To make the problem non--trivial, one should ask whether there exists a
non--trivial non--normal tracial weight on $\cB(\cH)$ which vanishes
on trace class operators.
This was answered affirmatively by \textsc{J. Dixmier} in the short note \cite{Dix:ETN}.
We briefly describe Dixmier's very elegant argument.

Denote by $\cK(\cH)$ the ideal
of compact operators. We abbreviate
\begin{equation}
\cL^p(\cH):=\cL^p(\cB(\cH),\Tr),
\end{equation}
see Section \plref{ss:trace-ideals}. 
A compact operator $T$ is in $\cL^1(\cH)$
if and only if $\sum\limits_{j=1}^\infty \mu_j(T)<\infty$. Here $\mu_j(T), j\ge 1,$ 
denotes the sequence of eigenvalues of $|T|$ counted with multiplicity.

By $\cL^{(1,\infty)}(\cH)\supset\cL^1(\cH)$ one denotes the space of 
$T\in\cK(\cH)$ for which 
\[\sum\limits_{j=1}^{N}\mu_j(T)=O(\log N),\quad  N\to\infty.\]

For an operator $T\in\cL^{(1,\infty)}(\cH)$ the sequence 
\[
\ga_N(T):=\frac{1}{\log (N+1)}\sum_{j=1}^{N} \mu_j(T),\quad  N\ge 1,\]
is thus bounded.
\begin{prop}[\textsc{J. Dixmier} {\cite{Dix:ETN}}]\label{p:Dixmier-Connes}
Let $\go\in l^\infty(\Z_+\setminus\{0\})^*$ be 
a  linear functional satisfying
\begin{enumerate}
\item[\textup{(1)}] $\go$ is a \emph{state}, that is, a 
positive linear functional with\\ $\go(1,1,\dots)=1$.
\item[\textup{(2)}] $\go((\ga_N)_{N\ge 1})=0$ if $\lim\limits_{N\to\infty} \ga_N=0$.
\item[\textup{(3)}]
  \begin{equation}\label{eq:20081114-3}
\go(\ga_1,\ga_2,\ga_3,\dots)=\go(\ga_1,\ga_1,\ga_2,\ga_2,\dots).
\end{equation}
\end{enumerate}
Put for non--negative $T\in\cL^{(1,\infty)}(\cH)$
\begin{equation}
  \begin{split}
   \Tr_\go(T)&:=\go\Bigl(\bigl(\frac{1}{\log (N+1)}\sum_{j=1}^{N}\mu_j(T)\bigr)_{N\ge 1}\Bigr)\\
              &=:\lim_\go \frac{1}{\log (N+1)}\sum_{j=1}^{N}\mu_j(T).
  \end{split}		
\end{equation}
Then $\Tr_\go$ extends by linearity to a trace on $\cL^{(1,\infty)}(\cH)$.
If $T\in\cL^1(\cH)$ is of trace class then $\Tr_\go(T)=0$ . Furthermore,
\begin{equation}
   \Tr_\go(T)=\lim_{N\to\infty}\frac{1}{\log (N+1)}\sum_{j=1}^{N}\mu_j(T),
\end{equation}
if the limit on the right hand side exists.

Finally, by putting $\Tr_\go(T)=\infty$
if $T\in\cB_+(\cH)\setminus \cL^{(1,\infty)}(\cH)$ 
one extends $\Tr_\go$ to $\cB_+(\cH)$ and hence one
obtains a non--normal
tracial weight on $\cB(\cH)$.
\end{prop}
\begin{proof}
Let us make a few comments on how this result is proved:
First the existence of a state $\go$ with the properties (1), (2), and (3)
can be shown by a fixed point argument; in this simple case even Schauder's
Fixed Point Theorem would suffice. Alternatively, the theory of Ces{\`a}ro
means leads to a more constructive proof of the existence of $\go$,
\textsc{Connes} \cite[Sec. 4.2.$\gamma$]{Con:NG}.

Next we note that (1) and (2) imply 
that if $(\ga_N)_{N\ge 1}$ is convergent then 
$\go((\ga_N)_{N\ge 1})=\lim\limits_{N\to\infty} \ga_N$. 
Thus changing finitely many terms of $(\ga_N)_{N\ge 1}$ 
(i.e. adding a sequence of limit $0$) does not change
its $\go$--limit. Together with the positivity of
$\go$ this implies
\begin{equation}\label{eq:20090511-2}
\text{if $\ga_N\le\gb_N$ for $N\ge N_0$ then $\go((\ga_N)_{N\ge 1})
\le \go((\gb_N)_{N\ge 1})$.}
\end{equation}


The previously mentioned facts imply furthermore
\begin{equation}\label{eq:20081114-2}
       \liminf_{N\to\infty}\ga_N\le \go((\ga_N)_{N\ge 1})\le \limsup_{N\to\infty}\ga_N.
\end{equation}

Now let $T_1,T_2\in\cL^{(1,\infty)}$ be non--negative operators and
put
\begin{equation}\begin{split}
      \ga_N&:=\frac{1}{\log (N+1)}\sum_{j=1}^{N}\mu_j(T_1),\quad 
              \gb_N:=\frac{1}{\log (N+1)}\sum_{j=1}^{N}\mu_j(T_2),\\
      \gamma_N&:=\frac{1}{\log (N+1)}\sum_{j=1}^{N}\mu_j(T_1+T_2).
  \end{split}
\end{equation}
Using the min-max principle one shows the inequalities
\begin{equation}\label{eq:maxmin-inequalities}
  \sum_{j=1}^N \mu_j(T_1+T_2)\le \sum_{j=1}^N \mu_j(T_1)+\mu_j(T_2)\le
  \sum_{j=1}^{2N} \mu_j(T_1+T_2),
\end{equation}
cf.  \textsc{Hersch} \cite{Her:CVS, Her:IVP}, thus 
\begin{align}
       \gamma_N&\le \ga_N+\gb_N,\label{eq:20090511-1}\\
       \ga_N+\gb_N &\le \frac{\log (2N+1)}{\log (N+1)}
       \gamma_{2N}.\label{eq:20081118-7}
\end{align}
\eqref{eq:20090511-1} gives $\go((\gamma_N)_{N\ge 1})\le
\go((\ga_N)_{N\ge 1})+\go((\gb_N)_{N\ge 1})$.

The proof of the converse inequality makes essential use
of the crucial assumption \eqref{eq:20081114-3}. Together with
\eqref{eq:20081118-7} and \eqref{eq:20090511-2} we find
\begin{equation}
\begin{split}
     \go((\ga_N)_{N\ge 1})+\go((\gb_N)_{N\ge 1}) & \le
\go(\gamma_2,\gamma_4,\gamma_6,\dots)\\
       &=\go(\gamma_2,\gamma_2,\gamma_4,\gamma_4,\dots),
\end{split}
\end{equation}
so, in view of \ref{p:Dixmier-Connes} (2),  it only remains to remark 
that \[\lim\limits_{N\to\infty} (\gamma_{2N}-\gamma_{2N-1})=0.\]

%

Thus $\Tr_\go$ is additive on the cone of positive operators. Since $\Tr_\go(T)$
depends only on the spectrum, it is certainly invariant under conjugation
by unitary operators. Now it is easy to see that $\Tr_\go$ extends by linearity
to a trace on $\cL^{(1,\infty)}(\cH)$. The other properties follow easily.
\end{proof}

\section{Pseudodifferential operators with parameter}
\label{s:POP}

\subsection{From differential operators to pseudodifferential operators}
Historically, pseudodifferential operators were invented to understand differential
operators. Suppose given a differential operator
\begin{equation}\label{eq:2-1}
    P=\sum_{|\ga|\le d} p_\ga(x)\; i^{-|\ga|} \frac{\pl^\ga}{\pl x^\ga}
\end{equation}
in an open set $U\subset\R^n$. Representing a function $u\in\cinfz{U}$
in terms of its Fourier transform
\begin{equation}
     u(x)=\int_{\R^n} e^{i \scalar{x}{\xi}} \hat u(\xi)\dbar\xi,\quad \dbar\xi=(2\pi)^{-n} d\xi,
     \label{eq:Fourier2-2}
\end{equation}
where $\hat u(\xi)=\int_{\R^n} e^{-i \scalar{x}{\xi}} u(x) dx$,
we find
\begin{equation}\label{eq:2.3}
   \begin{split}
      Pu(x)&= \int_{\R^n} e^{-i \scalar{x}{\xi}}p(x,\xi) \hat u(\xi)\dbar\xi \\
           &=\int_{\R^n}\Bigl(\int_U e^{i\scalar{x-y}{\xi}} p(x,\xi) u(y) dy \Bigr)\dbar\xi\\
           &=:\bigl(\Op(p) u\bigr)(x).
   \end{split}
\end{equation}
Here
\begin{equation}\label{eq:2.4}
   p(x,\xi)= \sum_{|\ga|\le d}p_\ga(x)\xi^\ga
\end{equation}
denotes the \emph{complete symbol} of $P$. The right hand side of \eqref{eq:2.3} shows
that $P$ is a pseudodifferential operator with complete symbol function
$p(x,\xi)$.

Note that $p(x,\xi)$ is a polynomial in $\xi$. One now considers 
pseudodifferential operators with more general symbol functions such that
inverses of differential operators are included into the calculus. E.g.
a first approximation to the resolvent $(P-\gl^d)\ii$ is given
by $\Op((p(\cdot,\cdot)-\gl^d)\ii)$. For constant coefficient differential
operators this is indeed the exact resolvent.

Let us now describe the most commonly used symbol spaces. In view of
the resolvent example above we are going to consider symbols with
an auxiliary parameter.

\pagebreak[3]
\subsection{Basic calculus with parameter}
We first recall the notion of conic manifolds and conic sets
from \textsc{Duistermaat} \cite[Sec. 2]{Dui:FIO}.
A conic manifold is a smooth principal fiber bundle $\Gamma \rightarrow B$
with structure group $\R_+^*:=(0,\infty)$. It is always trivializable. 
A subset $\Gamma \subset \R^N\setminus \{ 0 \} $ 
which is a conic manifold by the natural $\R_+^*$-action on $\R^N\setminus
\{0\}$ is called a conic set. 
The base manifold of a conic set $\Gamma \subset \R^N\setminus\{0\}$ is
diffeomorphic to $S \Gamma := \Gamma \cap S^{N -1}$. By a cone 
$\Gamma \subset \R^N$ we will always mean a conic set or the 
closure of a conic set in $\R^N$ such that $\Gamma$ has nonempty interior.
Thus $\R^N$ and $\R^N\setminus\{0\}$ are cones, but only the latter is a conic set. 
$\{0\}$ is a zero--dimensional cone.

\subsubsection{Symbols}
Let $U\subset \R^n$ be an open subset and $\Gamma\subset \R^N$ a cone. A typical
example we have in mind is $\Gamma=\R^n\times\Lambda$, where $\Lambda\subset\C$
is an open cone. 

We denote by $\sym^m(U;\Gamma)$, $m\in \R$, the space of symbols 
of H\"ormander type $(1,0)$ (\textsc{H\"ormander} \cite{Hor:FIOI}, 
\textsc{Grigis--Sj{\o}strand} \cite{GriSjo:MAD}). 
More precisely, $\sym^m(U;\Gamma)$ consists of those 
$a\in \CC^\infty(U\times \Gamma)$ such that for multi--indices 
$\alpha\in \Z_+^n,\gamma\in \Z_+^N$ and compact subsets $K\subset U, L\subset\Gamma$ 
we have an estimate
\begin{equation}\label{eq:3.1}
    \bigl|\partial_x^\alpha\partial_\xi^\gamma a(x,\xi)\bigr|
  \le C_{\alpha,\gamma,K,L} (1+|\xi|)^{m-|\gamma|}, \quad x\in K, \xi\in L^c.
\end{equation}
Here $L^c=\bigsetdef{t\xi}{\xi\in L, t\ge 1}$.
The best constants in \eqref{eq:3.1} provide a set of 
semi-norms which endow
$\sym^\infty (U;\Gamma):=\bigcup_{m\in\C}\sym^m(U;\Gamma)$ with the structure of a
Fr{\'e}chet algebra. 
We mention the following variants of the space $\sym^\bullet$:
\subsubsection{Classical symbols $\CS^m(U;\Gamma)$}
A symbol $a\in\sym^m(U;\Gamma)$ is called \emph{classical} if there are
$a_{m-j}\in \cinf{U\times\Gamma}$ with
\begin{equation}\label{eq:classical}
   a_{m-j}(x,r\xi)=r^{m-j} a_{m-j}(x,\xi),\quad r\ge 1, |\xi|\ge 1,
\end{equation}
such that for $N\in\Z_+$
\begin{equation}\label{eq:classical-a}
  a-\sum_{j=0}^{N-1} a_{m-j}\in\sym^{m-N}(U;\Gamma).
\end{equation}
The latter property is usually abbreviated $a\sim\sum\limits_{j=0}^\infty a_{m-j}$.

Many authors require the functions in \eqref{eq:classical} to be 
homogeneous everywhere on $\Gamma\setminus\{0\}$. Note however 
that if $\Gamma=\R^p$ and $f:\Gamma\to\C$ is a function which is homogeneous
of degree $\ga$ then $f$ cannot be smooth at $0$ unless $\ga\in\Z_+$. So
such a function is not a symbol in the strict sense. We prefer the
functions in the expansion \eqref{eq:classical-a} to be smooth everywhere
and homogeneous only for $r\ge 1$ and $|\xi|\ge 1$.

The space of classical symbols of order $m$ is denoted by $\CS^m(U;\Gamma)$.
In view of the asymptotic expansion \eqref{eq:classical-a} we have
$\CS^{m'}(U;\Gamma)\subset \CS^m(U;\Gamma)$ only if $m-m'\in\Z_+$ is a non--negative
integer. 
\subsubsection{$\log$--polyhomogeneous symbols $\CS^{m,k}(U;\Gamma)$}
$a\in \sym^m(U;\Gamma)$ is called
\emph{$\log$--polyhomogeneous} (cf.~\textsc{Lesch} \cite{Les:NRP}) of order
$(m,k)$ if it has an 
asymptotic expansion in $\sym^\infty (U;\Gamma)$ of the form
\begin{equation}\label{ML-G2.2}
    a\sim\sum\limits_{j=0}^\infty a_{m-j} \quad   
    \text{ with } a_{m-j}=\sum_{l=0}^{k} b_{m-j,l}, 
   \end{equation}
where $a_{m-j}\in \CC^\infty(U\times \Gamma)$ and 
$b_{m-j,l}(x,\xi)=\tilde b_{m-j,l}(x,\xi/|\xi|)|\xi|^{m-j}\log^l|\xi|$ for
$|\xi|\ge 1$. 

By $\CS^{m,k}(U;\Gamma)$ we denote 
the space of $\log$--polyhomogeneous symbols of order $(m,k)$.
Classical symbols are those of $\log$ degree $0$, i.e.
$\CS^m(U;\Gamma)=\CS^{m,k}(U;\Gamma)$.
\subsubsection{Symbols which are holomorphic in the parameter}
If $\Gamma=\R^n\times\Lambda$, where $\Lambda\subset\C$ is
a cone one may additionally require symbols to
be holomorphic in the $\Lambda$ variable. This aspect is
important if one deals with the resolvent of an elliptic differential
operator since the latter depends analytically on the resolvent parameter.
This class of symbols is not emphasized in this paper.

\subsubsection{Pseudodifferential operators with parameter}

Fix $a\in\sym^m(U;\R^n\times\Gamma)$ (respectively~$\in \CS^m(U;\R^n\times\Gamma)$).
For each fixed
$\mu_0\in\Gamma$ we have $a(\cdot, \cdot, \mu_0) \in \sym^m (U; \R^n)$
(respectively~$\in\CS^m(U;\R^n))$ and hence
we obtain a family of pseudodifferential operators
parametrized over $\Gamma$ by putting
\begin{equation}\label{eq:psido}
\begin{split}
 \big[ \Op( &a(\mu_0) ) \, u \big] \, (x):=  \big[ A(\mu_0) \, u \big] (x)\\
      &:= \int_{\R^n} \, e^{i \langle x,\xi \rangle} \, 
      a(x,\xi,\mu_0) \, \hat{u} (\xi ) \, \dbar \xi \\
      &= \int_{\R^n}\int_U \, e^{i \langle x-y,\xi \rangle} \, 
      a(x,\xi,\mu_0) \, u(y)  dy \dbar \xi .
\end{split}
\end{equation}

Note that the Schwartz kernel $K_{A(\mu_0)}$ of $A(\mu_0)=\Op(a(\mu_0))$
is given by
\begin{equation}\label{eq:Schwartz-kernel}
   K_{A(\mu_0)}(x,y,\mu_0)=\int_{\R^n}\, e^{i \scalar{x-y}{\xi} }\, 
      a(x,\xi,\mu_0) \, \dbar \xi .
\end{equation}
In general the integral is to be understood as an oscillatory integral,
for which we refer the reader to \cite{Shu:POST}, \cite{GriSjo:MAD}. 
The integral exists in the usual sense if $m+n<0$. 

The extension to manifolds and vector bundles is now straightforward.
Although historically it took quite a while until the theory
of singular integral operators had evolved into a theory
of pseudodifferential operators on vector bundles over smooth
manifolds (\textsc{Calder{\'o}n-Zygmund} \cite{CalZyg:SIO}, \textsc{Seeley}
\cite{See:SIC,See:IDO}, \textsc{Kohn-Nirenberg} \cite{KohNir:APD}).
For a smooth manifold $M$ and a vector bundle $E$ over $M$ we define the
space $\CL^m (M,E; \Gamma)$ of classical parameter dependent pseudodifferential
operators between sections of $E$ in the usual way by patching together local
data:

\begin{dfn}\label{d:pseudo-param}
Let $E$ be a complex vector bundle of finite fiber dimension $N$ over a smooth closed
manifold $M$ and let $\Gamma\subset\R^p$ be a cone. 
A {\em classical pseudo\-differential
operator of order $m$ with parameter} $\mu\in \Gamma$ is a family
of operators 
$B(\mu):\Gamma^\infty(M;E)\longrightarrow \Gamma^\infty(M;E),\, \mu\in\Gamma$, 
such that locally $B(\mu)$ is given by
\[
\big[B(\mu)\, u\big](x)=(2\pi)^{-n}\int_{\R^n}\int_Ue^{i\scalar{x-y}{\xi}} b(x,\xi,\mu)u(y)dyd\xi
\]
with $b$ an $N\times N$ matrix of functions belonging to
$\CS^m(U,\R^n\times \Gamma)$.

$\CL^{m,k}(M,E;\Gamma)$ is defined similarly, although we will discuss $\CL^{m,k}$
only in the non--parametric case. Of course, operators may act between
sections of different vector bundles $E,F$. In that case we write $\CL^{m,k}(M,E,F;\Gamma)$.
\end{dfn}

\begin{remark}\label{rem:20081120}

1. In case $\Gamma=\{0\}$ we obtain the usual (classical)
pseudodifferential operators of order $m$ on $U$. 
Here we write $\CL^m(M,E)$ instead of $\CL^m(M,E;\{0\})$
respectively $\CL^m(M,E,F)$ instead of $\CL^m(M,E,F;\{0\})$.

2. 
Parameter dependent pseudodifferential operators play a crucial
role, e.g., in the construction of the resolvent expansion
of an elliptic operator (\textsc{Gilkey} \cite{Gil:ITH}). 

A {\em pseudodifferential operator with parameter} is more than just a map from
$\Gamma$ to the space of pseudodifferential operators, cf. Corollary 
\ref{c:elliptic-regularity} and Remark
\ref{rem:elliptic-regularity}. 

To illustrate this let us consider a single elliptic operator
$A\in\CL^m(U)$. For simplicity let the symbol $a(x,\xi)$ of $A$
be positive definite. Then we can consider the ``parametric
symbol''
$b(x,\xi,\gl)=a(x,\xi)-\gl^m$ for $\gl\in \Lambda:=\C\setminus \R_+$.

However, in general $b$ lies in $\CS^m(U;\Lambda)$ only if $A$
is a differential operator. The reason is that $b$ will satisfy
the estimates \eqref{eq:3.1} only if $a(x,\xi)$
is polynomial in $\xi$, because then $\partial_\xi^\gb a(x,\xi)=0$
if $|\gb|>m$. If $a(x,\xi)$ is not polynomial in $\xi$, however,
\eqref{eq:3.1} will in general not hold if $\gb>m$.

This problem led \textsc{Grubb} and \textsc{Seeley} \cite{GruSee:WPP}
to invent their calculus of \emph{weakly parametric} pseudodifferential
operators. $b(x,\xi,\gl)=a(x,\xi)-\gl^m$ is weakly parametric
for any elliptic $A$ with positive definite leading symbol
(or more generally if $A$ satisfies Agmon's angle condition).
The class of weakly parametric operators is beyond the scope
of this survey, however.

3. The definition
of the parameter dependent calculus is not uniform in the literature.
It will be crucial in the sequel that differentiating by the parameter
reduces the order of the operator. This is the convention, e.g.
of \textsc{Gilkey} \cite{Gil:ITH} but differs from the one in 
\textsc{Shubin} \cite{Shu:POST}. 
In \textsc{Lesch--Pflaum} \cite[Sec.~3]{LesPfl:TAP} 
it is shown that parameter dependent pseudodifferential operators can
be viewed as translation invariant pseudodifferential
operators on $U\times \Gamma$ and therefore our convention
of the parameter dependent calculus contains \textsc{Melrose}'s
suspended algebra from \cite{Mel:EIF}. 
\end{remark}

\begin{prop}$\CL^{\bullet,\bullet}(M,E;\Gamma)$ is a bi--filtered algebra,
that is, 
\[A  B\in\CL^{m+m',k+k'}(M,E;\Gamma)\]
for $A\in\CL^{m,k}(M,E;\Gamma)$ and $B\in\CL^{m',k'}(M,E;\Gamma)$.
\end{prop}

The following result about the $L^2$--continuity of a
parameter dependent pseudodifferential operator is crucial.
We denote by $L^2_s(M,E)$ the Hilbert space of sections of $E$
of Sobolev class $s$.

\begin{theorem}\label{t:l2continuity}
Let $A\in\CL^m(M,E;\Gamma)$. Then for fixed $\mu\in\Gamma$ the
operator $A(\mu)$ extends by continuity to a bounded linear
operator $L^2_s(M,E)\longrightarrow L^2_{s-m}(M,E)$, $s\in\R$.

Furthermore, for $m\le 0$ one has the following uniform estimate
in $\mu$: for $0\le\vartheta\le 1, \mu_0\in\Gamma$, 
there is a constant $C(s,\vartheta)$ such that
\[
    \|A(\mu)\|_{s,s+\vartheta|m|}\le C(s,\vartheta,\mu_0) 
        (1+|\mu|)^{-(1-\vartheta)|m|},\quad |\mu|\ge |\mu_0|,\; \mu\in\Gamma.
\]
Here $\|A(\mu)\|_{s,s+\vartheta|m|}$ denotes the norm of the operator
$A(\mu)$ as a map from the Sobolev space $L^2_s(M,E)$ into
$L^2_{s+\vartheta |m|}(M,E)$.
\end{theorem}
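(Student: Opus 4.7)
The plan is to localize, then reduce everything to the parameter-free $L^2$-continuity theorem for pseudodifferential operators, exploiting the crucial feature of the parameter-dependent calculus noted in Remark \ref{rem:20081120}.3: in the symbol estimates, $\xi$ and $\mu$ play symmetric roles, so decay in $|\mu|$ can be traded for regularity in $\xi$.

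First I would reduce the statement to $M=U\subset\R^n$ open with $E$ trivial by means of a finite partition of unity subordinate to chart/trivialization domains, absorbing the off-diagonal pieces into smoothing operators. The first assertion (ordinary $L^2$-boundedness for each fixed $\mu$) then follows by applying the classical Calder\'on--Vaillancourt / H\"ormander $L^2$-boundedness theorem to the parameter-free symbol $a(\cdot,\cdot,\mu)\in\sym^m(U;\R^n)$, together with the standard fact that a classical $\Psi$DO of order $m$ maps $L^2_s$ into $L^2_{s-m}$.

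For the parameter-uniform estimate, the key point is that from the symbol estimate \eqref{eq:3.1} applied on the cone $\R^n\times\Gamma$ one gets
\[
  \bigl|\partial^\alpha_x\partial^\beta_\xi a(x,\xi,\mu)\bigr|
    \le C_{\alpha,\beta,K}(1+|\xi|+|\mu|)^{m-|\beta|}, \qquad x\in K,
\]
(no $\mu$-derivatives taken). Now the elementary inequality $(1+|\xi|)^{\vartheta}(1+|\mu|)^{1-\vartheta}\le 1+|\xi|+|\mu|$ for $0\le\vartheta\le 1$, raised to the nonpositive power $m$, gives
\[
  (1+|\xi|+|\mu|)^m \le (1+|\xi|)^{\vartheta m}(1+|\mu|)^{-(1-\vartheta)|m|}.
\]
Combining this with $(1+|\xi|+|\mu|)^{-|\beta|}\le (1+|\xi|)^{-|\beta|}$ yields
\[
  \bigl|\partial^\alpha_x\partial^\beta_\xi a(x,\xi,\mu)\bigr|
    \le C_{\alpha,\beta,K}\,(1+|\mu|)^{-(1-\vartheta)|m|}(1+|\xi|)^{\vartheta m-|\beta|}.
\]
Thus, for $|\mu|\ge|\mu_0|$, the frozen symbol $a(\cdot,\cdot,\mu)$ lies in $\sym^{\vartheta m}(U;\R^n)$ with every seminorm bounded by a constant times $(1+|\mu|)^{-(1-\vartheta)|m|}$.

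Finally I would invoke the standard $L^2$-continuity theorem in its \emph{quantitative} form: the operator norm $\|{\rm Op}(b)\|_{s,s-\vartheta m}$ of a pseudodifferential operator of order $\vartheta m$ is dominated by finitely many $\sym^{\vartheta m}$-seminorms of $b$. Plugging in $b=a(\cdot,\cdot,\mu)$ and using the bound just established yields exactly $\|A(\mu)\|_{s,s+\vartheta|m|}\le C(s,\vartheta,\mu_0)(1+|\mu|)^{-(1-\vartheta)|m|}$. The only real obstacle is the bookkeeping at this last step---making sure one cites (or, if necessary, re-examines the proof of) the $L^2$-boundedness theorem in a form that makes the dependence of the operator norm on finitely many symbol seminorms explicit; this is automatic in the usual proofs via Schur's test or H\"ormander's square-root trick, but it is the point on which the whole uniform estimate hinges.
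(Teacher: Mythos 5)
Your proof is correct and is essentially the standard argument: the paper itself does not prove this theorem but refers to Shubin's book for it, and the proof there hinges on the same interpolation trick, namely the weighted AM--GM bound $(1+|\xi|)^\vartheta(1+|\mu|)^{1-\vartheta}\le 1+|\xi|+|\mu|$ raised to the nonpositive power $m$, which converts the parametric symbol estimate into a family of order-$\vartheta m$ symbol estimates whose seminorms decay at the rate $(1+|\mu|)^{-(1-\vartheta)|m|}$. Your closing caveat about needing the $L^2$-boundedness theorem in quantitative form is well placed but harmless: after localizing to symbols with compact $x$-support and conjugating by Bessel potentials to treat general $s$, the operator norm is indeed bounded by finitely many symbol seminorms (Schur's test or Calder\'on--Vaillancourt give this with explicit constants), which is exactly what your seminorm estimate on the frozen symbol $a(\cdot,\cdot,\mu)$ requires; the role of $\mu_0$ is simply that when $\Gamma$ is a proper cone the symbol estimate is only guaranteed on conic neighborhoods bounded away from the vertex, so one restricts to $|\mu|\ge|\mu_0|$.
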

If $\Gamma=\R^n$ then we can omit the $\mu_0$ in the formulation of the Theorem
(i.e. $\mu_0=0$). For a proof of Theorem \plref{t:l2continuity} see e.g.
\cite[Theorem 9.3]{Shu:POST}.

\subsubsection{The parametric leading symbol} 
The leading symbol of a classical pseudodifferential operator $A$ of order $m$
with parameter is now defined as follows: if $A$ has complete symbol $a(x,\xi,\mu)$
with expansion $a\sim\sum\limits_{j=0}^\infty a_{m-j}$ then
\begin{equation}\label{20081116-1}
  \begin{split}
     \sigma_A^m(x,\xi,\mu)&:=\lim_{r\to\infty} r^{-m}a(x,r\xi,r\mu)\\
                          &=  (|\xi|^2+|\mu|^2)^{m/2}
                             a_m(x,\frac{(\xi,\mu)}{\sqrt{|\xi|^2+|\mu|^2}}).
  \end{split}
\end{equation} 
$\sigma_A^m$ has an invariant meaning as a smooth function on
\[T^*M\times\gG\,\setminus\, \bigsetdef{(x,0,0)}{x\in M}\]
which is homogeneous in the following sense:
\[
\gs^m_A(x,r\xi,r\mu)=r^m\gs^m_A(x,\xi,\mu) \text{ for } (\xi,\mu)\ne (0,0),\, r>0.
\]

This symbol is determined by its restriction to the sphere in 
\[
S(T^*M\times \Gamma)=\bigsetdef{(\xi,\mu)\in T^*M\times \Gamma}{ |\xi|^2+|\mu|^2=1}
\]
and there is an exact sequence
\begin{equation}
0\longrightarrow \CL^{m-1}(M;\Gamma)\hookrightarrow \CL^m(M;\Gamma)\xrightarrow{\sigma}
C^\infty(S(T^*M\times\Gamma))\longrightarrow 0;
\end{equation}
the vector bundle $E$ being omitted from the notation just to save horizontal space.

\begin{example}\label{ex:20081120} Let us look at an example to illustrate the difference between
the parametric leading symbol and the leading symbol for a single pseudodifferential
operator. Let
\begin{equation}
    a(x,\xi)=\sum_{|\ga|\le m} a_\ga(x) \xi^\ga
\end{equation}
be the complete symbol of an elliptic \emph{differential} operator. Then
(cf. Remark \eqref{rem:20081120} 2.)
\begin{equation}
    b(x,\xi,\gl)= a(x,\xi) -\gl^m
\end{equation}
is a symbol of a parameter dependent (pseudo)differential operator $B(\gl)$
with parameter $\gl$ in a suitable cone $\Lambda\subset\C$. 
The parameter dependent leading symbol of $B$ is $\sigma_B^m(x,\xi,\gl)=a_m(x,\xi)-\gl^m$
while for fixed $\gl$ the leading symbol of the single operator $B(\gl)$ is
$\sigma_{B(\gl)}^m(x,\xi)=a_m(x,\xi)=\sigma_{B}^m(x,\xi,\gl=0)$.
\end{example}

In fact we have in general:

\begin{lemma} Let $A\in\CL^m(M,E;\Gamma)$ with parameter dependent leading symbol
$\sigma_A^m(x,\xi,\mu)$. For fixed $\mu_0\in\Gamma$ the operator $A(\mu_0)\in\CL^m(M,E)$
has leading symbol $\sigma_{A(\mu_0)}^m(x,\xi)=\sigma_A^m(x,\xi,0)$.
\end{lemma}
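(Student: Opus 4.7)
The statement is local and about leading symbols, so I would work in a chart where $A$ has a local complete symbol $a \sim \sum_{j\ge 0} a_{m-j}$ with each $a_{m-j}$ homogeneous of degree $m-j$ jointly in $(\xi,\mu)$ for $|\xi|^2+|\mu|^2\ge 1$. For fixed $\mu_0$ the operator $A(\mu_0)$ has local complete symbol $a(\cdot,\cdot,\mu_0)$, and the plan is to show that this symbol is classical of order $m$ in $\xi$ alone with top homogeneous component $\sigma_A^m(x,\xi,0)$.

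The basic observation is that the homogeneity inherited from the parametric calculus is joint in $(\xi,\mu)$, whereas the non-parametric leading symbol only captures homogeneity in $\xi$. Concretely, for $|\xi|\ge 1$ the joint homogeneity of $a_m$ gives
\begin{equation*}
a_m(x,\xi,\mu_0)=|\xi|^m \, a_m\Bigl(x,\tfrac{\xi}{|\xi|},\tfrac{\mu_0}{|\xi|}\Bigr),
\end{equation*}
and a Taylor expansion in the parameter around $\mu=0$ yields
\begin{equation*}
a_m(x,\xi,\mu_0)\sim \sum_{\ga\in\Z_+^p}\frac{\mu_0^\ga}{\ga!}\,|\xi|^{m-|\ga|}\bigl(\pl_\mu^\ga a_m\bigr)\Bigl(x,\tfrac{\xi}{|\xi|},0\Bigr),\quad |\xi|\to\infty.
\end{equation*}
The $\ga=0$ term is $|\xi|^m a_m(x,\xi/|\xi|,0)=\sigma_A^m(x,\xi,0)$ by the homogeneous extension formula \eqref{20081116-1}, and every further term is smooth in $\xi\ne 0$ and positively homogeneous of degree $m-|\ga|$ on $|\xi|\ge 1$, hence a genuine classical component of lower order.

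Next I would deal with the lower parametric components $a_{m-j}(\cdot,\cdot,\mu_0)$ for $j\ge 1$. The same Taylor/homogeneity argument shows each of them is classical in $\xi$ of order at most $m-j$, with leading homogeneous piece $|\xi|^{m-j}a_{m-j}(x,\xi/|\xi|,0)$. Combining with the expansion of $a_m$ produces the full polyhomogeneous expansion of $a(\cdot,\cdot,\mu_0)$, whose degree-$m$ component is $\sigma_A^m(x,\xi,0)$; by definition of the non-parametric leading symbol this is exactly $\sigma_{A(\mu_0)}^m(x,\xi)$. Invariance under coordinate changes and independence of the chosen local symbol modulo $\CL^{m-1}(M,E)$ are standard and imply the result globally on $M$.

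The main technical obstacle is justifying that the Taylor remainders really control the symbol seminorms \eqref{eq:3.1} in $\xi$ uniformly in $x$ on compact sets, i.e.\ that the Taylor expansion in $\mu_0/|\xi|$ is not merely pointwise but a valid asymptotic expansion in $\sym^\bullet(U;\R^n)$. This is handled by writing the Taylor remainder in integral form and differentiating under the integral: each $\pl_\xi^\gamma$ applied to $|\xi|^{m-|\ga|}(\pl_\mu^\ga a_m)(x,\xi/|\xi|,\mu_0/|\xi|)$ gains a factor of order $|\xi|^{-|\gamma|}$ from the chain rule, while the boundedness of all joint $(x,\xi,\mu)$-derivatives of $a_m$ on the compact set $\{|(\xi,\mu)|=1\}\times K$ supplies uniform constants. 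Once this bookkeeping is in place the asymptotic identity follows.
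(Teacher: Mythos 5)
Your proof is correct, but it takes a genuinely different (and more elaborate) route than the paper. The paper's argument is short: write $a = a_m + \tilde a$ with $\tilde a \in \CS^{m-1}(U;\R^n\times\Gamma)$, note that $\tilde a(\cdot,\cdot,\mu_0)$ lies in $\CS^{m-1}(U;\R^n)$ and hence $r^{-m}\tilde a(x,r\xi,\mu_0)\to 0$, and then evaluate the limit in the defining formula \eqref{20081116-1}:
\begin{equation*}
\sigma_{A(\mu_0)}^m(x,\xi) = \lim_{r\to\infty} r^{-m} a_m(x,r\xi,\mu_0) = \lim_{r\to\infty} a_m(x,\xi,\mu_0/r) = a_m(x,\xi,0).
\end{equation*}
No Taylor expansion in $\mu$ is performed; the only input is the symbol estimate on $\tilde a$ and continuity of $a_m$ at $(x,\xi,0)$. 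Your approach instead Taylor-expands each homogeneous component $a_{m-j}(x,\xi,\mu_0)$ in the parameter and reassembles a full polyhomogeneous expansion of the restricted symbol. This is more work (you have to verify uniform control of the Taylor remainders in the symbol seminorms, and implicitly do a Borel-type rearrangement of the double sum over $j$ and $\ga$), but it buys more: it actually establishes that $a(\cdot,\cdot,\mu_0)$ is a \emph{classical} symbol of order $m$ in $\xi$, i.e.\ that $A(\mu_0)\in\CL^m(M,E)$, which the lemma asserts and the paper's quick limit argument silently takes for granted. So your proof is sound and in fact proves a slightly finer statement; the paper's is the more economical route if classicality of the restriction is treated as known.
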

\begin{proof} It suffices to prove this locally in a chart $U$ for a scalar
  operator $A$. Since the leading
symbols are homogeneous it suffices to consider $\xi$ with $|\xi|=1$.

So suppose
that $A$ has complete symbol $a(x,\xi,\mu)$ in $U$. 
Write $a(x,\xi,\mu)=a_m(x,\xi,\mu)+\tilde a(x,\xi,\mu)$ 
with $\tilde a\in\CS^{m-1}(U;\R^n\times\Gamma)$ and $a_m(x,r\xi,r\mu)=r^m a_m(x,\xi,\mu)$
for $r\ge 1,|\xi|^2+|\mu|^2\ge 1$. 
Then for fixed $\mu_0\in\Gamma$ we have 
$\tilde a(\cdot,\cdot,\mu_0)\in\CS^{m-1}(U;\R^n)$ and hence
$\lim\limits_{r\to\infty} r^{-m}\tilde a(x,r\xi,\mu_0)=0$. 
Consequently
\begin{equation}\begin{split}
       \sigma_{A(\mu_0)}^m(x,\xi)&=\lim_{r\to\infty} r^{-m} a_m(x,r\xi,\mu_0)\\
              &=\lim_{r\to\infty} a_m(x,\xi,\mu_0/r)=a_m(x,\xi,0).\qedhere
  \end{split}
\end{equation}
\end{proof}

\subsubsection{Parameter dependent ellipticity} This is now defined as the invertibility 
of the parametric leading symbol.
The basic example of a pseudodifferential operator with parameter is the resolvent of an
elliptic differential operator (cf. Remark \ref{rem:20081120} and
Example \ref{ex:20081120}). The following two results can also be found
in \cite[Section II.9]{Shu:POST}.

\begin{theorem}\label{t:elliptic-regularity}
Let $M$ be a closed manifold and $E,F$ complex vector bundles over $M$.
Let $A\in\CL^m(M,E,F;\Gamma)$ be \emph{elliptic}. Then there exists a
$B\in\CL^{-m}(M,F,E;\Gamma)$ such that $AB-I\in\CL^{-\infty}(M,F;\Gamma)$, 
$BA-I\in \CL^{-\infty}(M,E;\Gamma)$. 
\end{theorem}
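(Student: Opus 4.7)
The plan is to construct $B$ as a classical parametric parametrix by the standard symbolic inversion, carried out locally and patched together. By parameter dependent ellipticity, the leading symbol $\sigma_A^m(x,\xi,\mu)$ is invertible on the sphere $S(T^*M\times\Gamma)$, hence on the complement of the zero section $\{\xi=0,\mu=0\}$, and its inverse is homogeneous of degree $-m$ jointly in $(\xi,\mu)$. Pulling back to a local chart $U$ trivializing $E$ and $F$ and multiplying by a cutoff that vanishes near $|\xi|^2+|\mu|^2\le 1$, one obtains a matrix valued symbol $b_{-m}\in\CS^{-m}(U;\R^n\times\Gamma)$ inverting the local leading symbol of $A$ for large $(\xi,\mu)$.

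Next I would iteratively correct the lower order errors. Writing the composition formula in the parametric calculus (the parameter $\mu$ plays no active role in composition, only $x$ and $\xi$ are integrated out),
\begin{equation*}
a\#b \sim \sum_{\alpha}\frac{1}{\alpha!}\,\partial_\xi^\alpha a \cdot D_x^\alpha b,
\end{equation*}
I would recursively define $b_{-m-j}\in\CS^{-m-j}(U;\R^n\times\Gamma)$ for $j\ge 1$ so that the degree $-j$ component of $a\#(b_{-m}+\dots+b_{-m-j})$ cancels. Each step is a purely algebraic equation solved by left multiplication with the inverse of $\sigma_A^m$, and the joint $(\xi,\mu)$ homogeneity of that inverse ensures that $b_{-m-j}$ is a genuine parametric classical symbol. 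Borel summation (asymptotic completeness of $\CS^\bullet(U;\R^n\times\Gamma)$) yields $b\sim\sum_{j\ge 0} b_{-m-j}$ with $a\#b-1\in\CS^{-\infty}(U;\R^n\times\Gamma)$; the corresponding local operator $B_U$ satisfies $A_U B_U-I\in\CL^{-\infty}(U;\Gamma)$.

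To globalize, I would fix a finite atlas $\{U_i\}$ trivializing both bundles, a subordinate partition of unity $\{\varphi_i\}$, cutoffs $\psi_i\in\cinfz{U_i}$ with $\psi_i\equiv 1$ near $\operatorname{supp}\varphi_i$, and set $B:=\sum_i\psi_i B_i \varphi_i$ where $B_i$ is the local parametrix. Parametric pseudolocality, that is, the fact that the Schwartz kernel of any $Q\in\CL^m(M,E;\Gamma)$ is smooth away from the diagonal and rapidly decaying in $\mu$ there, shows that $[A,\varphi_i]B_i\psi_i$ and the off-diagonal terms in the product are all in $\CL^{-\infty}(M,F;\Gamma)$, hence $AB-I\in\CL^{-\infty}(M,F;\Gamma)$. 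An analogous construction gives a left parametrix $B'$, and the standard identity
\begin{equation*}
B-B' = B'(AB-I)-(B'A-I)B \in \CL^{-\infty}(M,F,E;\Gamma)
\end{equation*}
shows that $B$ also serves as a left parametrix, which completes the argument.

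The main subtlety, and essentially the only place where anything new beyond the classical parametrix construction happens, is ensuring that every remainder lies in the \emph{parametric} ideal $\CL^{-\infty}(M,\cdot;\Gamma)$ rather than merely being smoothing for each fixed $\mu$. This is precisely what forces one to invert $\sigma_A^m$ as a jointly homogeneous function of $(\xi,\mu)$ and to work throughout in the symbol classes $\CS^\bullet(U;\R^n\times\Gamma)$, so that decay in $\mu$ is preserved at every stage of the recursion and under Borel summation. Once this is respected, the local-to-global patching and the pseudolocality arguments are identical to the non-parametric case.
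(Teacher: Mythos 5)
Your proposal is correct and is precisely the standard parametrix construction adapted to the parameter dependent calculus; the paper itself does not reproduce a proof but refers to Shubin (Section II.9), where this same scheme — joint $(\xi,\mu)$-homogeneous inversion of the leading symbol, recursive cancellation via the composition formula, Borel summation, partition-of-unity patching, and comparison of left and right parametrices — is carried out. You also correctly isolate the one genuinely parametric subtlety, namely that remainders must lie in $\CL^{-\infty}(M,\cdot;\Gamma)$ and not merely be smoothing at each fixed $\mu$, which is exactly the distinction the paper emphasizes in its remarks on the parametric calculus.
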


Note that in view of Theorem \plref{t:l2continuity}
this implies the estimates
\begin{equation}\label{eq:smoothing-estimate}
    \|B(\mu)A(\mu)-I\|_{s,t}+\|A(\mu)B(\mu)-I\|_{s,t}\le C(s,t,N) (1+|\mu|)^{-N}
\end{equation}
for all $s,t\in\R, N>0$. 
This result has an important implication:

\begin{cor}\label{c:elliptic-regularity}
Under the assumptions of Theorem \plref{t:elliptic-regularity}, for each
$s\in\R$ there is a $\mu_0\in\Gamma$ such that for $|\mu|\ge |\mu_0|$
the operator
\[
   A(\mu):L^2_s(M,E)\longrightarrow L^2_{s-m}(M,F)
\]
is invertible.
\end{cor}
\begin{proof}In view of \eqref{eq:smoothing-estimate} 
there is a $\mu_0=\mu_0(s)$ such that 
\[\|(BA-I)(\mu)\|_s<1 \text{ and } \|(AB-I)(\mu)\|_{s-m}<1,\]
for $|\mu|\ge |\mu_0|$ and hence $AB:L^2_s\longrightarrow L^2_s$ and 
$BA:L^2_{s-m}\longrightarrow L^2_{s-m}$
are invertible.
\end{proof}

\begin{remark}\label{rem:elliptic-regularity}
This result causes an interesting constraint on those pseudodifferential
operators which may appear as special values of an elliptic parametric family. Namely,
if $A\in\CL^m(M,E,F;\Gamma)$ is parametric elliptic then for each $\mu$ the operator
$A(\mu)\in\CL^m(M,E,F)$ is elliptic. Furthermore, by the previous Corollary and the
stability of the Fredholm index we have $\ind A(\mu)=0$ for all $\mu$.
\end{remark}

\section{Extending the Hilbert space trace to pseudodifferential
operators}
\label{s:EHS}

We pause the discussion of pseudodifferential operators and
look at the Hilbert space trace $\Tr$ on pseudodifferential operators.

\subsection{$\Tr$ on operators of order $<-\dim M$}\label{ss:coordinate-invariance}
Consider the local situation, i.e. a compactly
supported operator $A=\Op(a)\in\CL^{m,k}(U,E)$ in a local chart.

If $m<-\dim M$ then $A$ is trace class and the trace
is given by integrating the kernel of $A$ over the diagonal:
\begin{equation}\label{eq:20081121-1}
  \begin{split}
  \Tr(A)&=\int_U \tr_{E_x}\bigl(k_A(x,x)\bigr) dx\\
  &=\int_U \int_{\R^n} \tr_{E_x} \bigl(a(x,\xi)\bigr)\dbar\xi dx,
\end{split}
\end{equation}
where we have used \eqref{eq:Schwartz-kernel}.

The right hand side is indeed coordinate invariant.
To explain this consider
a coordinate transformation $\kappa:U\to V$. Denote
variables in $U$ by $x,y$ and variables in $V$ by $\tilde x,\tilde y$.
It is not so easy to write down the symbol of $\kappa_*A$.
However, an amplitude function (these
are ``symbols'' which depend on $x$ and $y$, otherwise the
basic formula \eqref{eq:psido} still holds)
for $\kappa_* A$ is given by
\begin{equation}\label{eq:symbol-coordinate-change}
     (\tilde x,\tilde y,\xi)\mapsto a(\kappa^{-1}\tilde x,
   \phi(\tilde x,\tilde y)^{-1}\xi) \frac{|\det D\kappa^{-1}(\tilde x,\tilde
   y)|}{|\det \phi(\tilde x,\tilde y)|},
\end{equation}
cf. \cite[Sec. 4.1, 4.2]{Shu:POST}, where $\phi(\tilde x,\tilde y)$ is
smooth with $\phi(\tilde x,\tilde x)=D\kappa^{-1}(\tilde x)^t$. 
Comparing the trace densities in the two coordinate systems requires
a \emph{linear} coordinate change in the $\xi$--variable.
Indeed,
\begin{equation}\label{eq:tr-coordinate-invariance}
\begin{split}
     \Tr(\kappa_*A)&=\int_V
               \int_{\R^n}\tr_{E_{\tilde x}}\bigl( a(\kappa^{-1}\tilde
	       x,\phi(\tilde x,\tilde x)^{-1}\xi)\bigr)\dbar\xi d\tilde x\\
   &=\int_V \int_{\R^n} \tr_{E_{\tilde x}}\bigl( a(\kappa^{-1}\tilde
   x,\xi)\bigr)\dbar\xi\,|\det D\kappa^{-1}(\tilde x)|d\tilde x,\\
   &=\int_U \int_{\R^n} \tr_{E_{x}}\bigl( a(x,x,\xi)\bigr)\dbar\xi\,dx=\Tr(A).
 \end{split}
\end{equation}

Therefore, the trace of a pseudodifferential operator $A\in\CL^{m,k}(M,E)$
of order $m<-\dim M=:-n$ on the closed manifold $M$ may be calculated from the complete symbol of
$A$ in coordinates as follows. Choose a finite open cover by coordinate
neighborhoods $U_j, j=1,\ldots, r,$ and a subordinated partition of unity
$\varphi_j, j=1,\ldots,r$. Furthermore, let $\psi_j\in\cinfz{U_j}$ with
$\psi_j\varphi_j=\varphi_j$. Denoting by $a_j(x,\xi)$ the complete symbol
in the coordinate system on $U_j$ we obtain
\begin{equation}\label{eq:20090514}
\Tr(A)=\sum_{j=1}^r \Tr(\varphi_j A\psi_j)=\sum_{j=1}^r
\int_{U_j}\int_{\R^n} \varphi_j(x) \tr_{E_x}\bigl(a_j(x,\xi)\bigr)\dbar\xi\,dx.
\end{equation}

\begin{sloppy}
A priori the previous argument is valid only for operators of order $m<-n$. 
However, the symbol function $a_j(x,\xi)$ is rather well--behaved in
$\xi$. If for a class of pseudodifferential operators
we can regularize $\int_{\R^n} a_j(x,\xi) \dbar\xi $ in such a way 
that the change of variables
\eqref{eq:tr-coordinate-invariance} works then indeed \eqref{eq:20090514}
extends the trace to this class of operators. Such a regularization is provided by:
\end{sloppy}

\subsection{The Hadamard partie finie regularized integral}\label{ss:partie-finie}


The problem of regularizing divergent integrals is in fact quite old.
The method we are going to present here goes back to \textsc{Hadamard} who used
his method to regularize integrals which arose when solving the wave equation
\cite{Had:PCE}.

Given a function $f\in \CS^{m,k}(\R^p)$, e.g. $a(x,\cdot)$ above for fixed
$x$. Then $f$ has an asymptotic expansion
\begin{equation}\label{eq:20081120-6}
   f(x)\sim_{|x|\to\infty}
         \sum_{j=0}^\infty \sum_{l=0}^{k} f_{jl}(x/|x|)|x|^{m-j}\log^l|x|.
       \end{equation}
Integrating over balls of radius $R$ gives the asymptotic expansion
\begin{equation}\label{eq:20081120-7}
   \int_{|x|\le R} f(x) dx \sim_{R\to\infty}
   \sum_{j=0}^\infty \sum_{l=0}^{k+1} \tilde f_{jl} R^{m+n-j} \log^l R.
 \end{equation}
The \emph{regularized integral}
$\displaystyle \regint_{\R^p} f(x) dx$ is, by definition, the 
constant term in this asymptotic expansion. Some authors call
the regularized integral \emph{partie finie integral} or 
\emph{cut--off integral}.

It has a couple of peculiar
properties, cf.~\cite{Mel:EIF}, which were further investigated in 
\cite[Sec.~5]{Les:NRP} and \cite{LesPfl:TAP}. 
The most notable features are a modified change of
variables rule for linear coordinate changes
and, as a consequence, the fact that Stokes' theorem does
not hold in general:

\begin{prop}\textup{\cite[Prop.~5.2]{Les:NRP}}\label{p:change-variables}
Let $A\in {\rm GL}(p,\R)$ be a regular matrix. Further\-more, let
$f \in\CS^{m,k}(\R^p)$ with expansion \eqref{eq:20081120-6}.
Then we have the change of variables formula
\begin{multline}
\regint_{\R^p} f(A\xi) d\xi\\=
   |\det A|^{-1}\left( \regint_{\R^p} f(\xi)d\xi+
      \sum_{l=0}^{k}\frac{(-1)^{l+1}}{l+1}\int_{S^{p-1}} f_{-p,l}(\xi)
          \log^{l+1} |A^{-1}\xi| d\xi\right).
	\end{multline}
\end{prop}

\commentary{, or
in other words $\reginttext$
is not a closed functional on $\gO^*({\rm PS}(\R^p))$. 
More precisely, we extend $\reginttext$ to $\gO^* ({\rm PS}^*(\R^p))$
by putting 
\begin{equation}
  \regint:\go\mapsto\casetwo{0}{\go\in \gO^k,k<p,}{
    \regint_{\R^p}f(\xi)d\xi}{\go=f(\xi) d\xi_1\wedge\ldots\wedge d\xi_p.}
\end{equation}
In this way we obtain a graded trace on the complex 
$(\gO^* ({\rm PS}^*(\R^p)),d)$. This would be a cycle in the sense of
{\rm Connes}  \cite[Sec. III.1.$\alpha$]{Con:NG} if $\reginttext$
were closed. 

The next lemma shows that $d\reginttext$, which
is defined by $(d\reginttext)\go:=\reginttext d\go$, 
is nontrivial. However, it is local in the sense that it depends
only on the $\log$--polyhomogeneous expansion of $\go$.}

The following proposition, which substantiates the mentioned fact that
Stokes' Theorem does not hold for $\reginttext$, was stated as a Lemma in \cite{LesPfl:TAP}.
A couple of years later 
it was rediscovered by \textsc{Manchon}, \textsc{Maeda}, and
\textsc{Paycha} \cite{Manetal:SFC}, \cite{Pay:NRC}.

\begin{prop}\textup{\cite[Lemma 5.5]{LesPfl:TAP}} \label{S2-4.4} 
  Let $f\in \CS^{m,k}(\R^p)$
  with asymptotic expansion \eqref{eq:20081120-6}.
Then
\[\regint_{\R^p} \frac{\pl f}{\pl \xi_j} d\xi=
   \int_{S^{p-1}} f_{1-p,k}(\xi) \xi_j d{\rm vol}_S(\xi).\]
\end{prop}

We will come back to this below when we discuss the residue trace.

\subsection{The Kontsevich--Vishik canonical trace}

Using the Hadamard partie finie integral we can now follow the
scheme outlined in Subsection \ref{ss:coordinate-invariance}. 
Let $A\in\CL^{a,k}(M,E)$ be a $\log$--polyhomogeneous pseudodifferential
operator on a closed manifold $M$. If $a\not\in\Z$ we put, using
the notation of \eqref{eq:20090514} and
\eqref{eq:tr-coordinate-invariance},
\begin{equation}
  \TR(A):=\sum_{j=1} \int_{U_j}\regint_{\R^n}\varphi_j(x) \tr_{E_x}\bigl(
  a_j(x,\xi)\bigr)\dbar\xi\,dx.
\end{equation}
By Proposition \plref{p:change-variables} one shows exactly as in
\eqref{eq:tr-coordinate-invariance} that $\TR(A)$ is well--defined.

In fact we have (essentially) proved the following:

\begin{theorem}[\textsc{Kontsevich--Vishik}  {\cite{KonVis:GDE},
\cite{KonVis:DEP}},\newline \textsc{Lesch} {\cite[Sec. 5]{Les:NRP}}]\label{t:kont-vish}
There is a linear functional $\TR$ on 
$$\bigcup_{a\in \C\setminus\{-n,-n+1,-n+2,\ldots\},k\ge 0} \CL^{a,k}(M,E)$$
such that
\begin{enumerate}\renewcommand{\labelenumi}{{\rm (\roman{enumi})}}
  \item In a local chart $\TR$ is given by \eqref{eq:20081121-1}, with
  $\int_{\R^n}$ to be replaced by the cut--off integral $\regint_{\R^n}$.
\item $\TR\restriction\CL^{a,k}(M,E)=\Tr\restriction\CL^{a,k}(M,E)$ if $a<-\dim M$.
\item $\TR([A,B])=0$ if $A\in\CL^{a,k}(M,E), B\in CL^{b,l}(M,E)$, $a+b\not\in\Z$.
\end{enumerate}
\end{theorem}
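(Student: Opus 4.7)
The plan has four steps: give the local definition, show coordinate invariance, glue, and then verify the trace property. The first three are essentially formal given the tools already developed; the trace property in (iii) is the real work.

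\textbf{Step 1 (local definition and coordinate invariance).} For a compactly supported $A=\Op(a)\in \CL^{a,k}(U,E)$ in a chart I define the local cut--off trace by
\[
\TR_{\text{loc}}(A):=\int_U \regint_{\R^n} \tr_{E_x}\bigl(a(x,\xi)\bigr) \dbar\xi\, dx.
\]
The inner integral makes sense because for each fixed $x$ the function $\xi\mapsto a(x,\xi)$ lies in $\CS^{a,k}(\R^n)$, so its radial asymptotic expansion \eqref{eq:20081120-6} allows extraction of the constant term in \eqref{eq:20081120-7}. To check invariance under a coordinate change $\kappa:U\to V$ I would repeat the computation \eqref{eq:tr-coordinate-invariance}, except that the inner linear change of variables $\xi\mapsto \phi(\tilde x,\tilde x)^{-1}\xi$ must now be performed under $\regint$ rather than $\int$. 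Proposition \plref{p:change-variables} gives the naive Jacobian formula plus a correction involving the component $f_{-n,l}$ of the radial expansion, i.e.\ the piece of homogeneity exactly $-n$. Since the orders appearing in the $\xi$-expansion of $a(x,\cdot)$ are $a,\ a-1,\ a-2,\dots$, and $a\notin\Z$ by hypothesis, none of these equals $-n$; hence $f_{-n,l}\equiv 0$ and the correction vanishes. This is the place where the assumption $a\notin\Z$ is essential.

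\textbf{Step 2 (globalization and property (ii)).} Choosing a finite atlas and a subordinate partition of unity $\varphi_j$ with cut--offs $\psi_j$ as in \eqref{eq:20090514}, I put
\[
\TR(A):=\sum_{j} \TR_{\text{loc}}(\varphi_j A \psi_j).
\]
Independence of the partition is checked by the standard argument: if $(\varphi_j'),(\psi_j')$ is another choice, refine the two atlases to a common one and use Step 1 on each piece; the difference is absorbed by linearity of $\regint$ and of $\tr_{E_x}$. Property (ii) is then immediate: for $a<-n$ each local cut--off integral is an absolutely convergent ordinary integral (the expansion has no non--negative exponents and the constant term equals the honest integral), so $\TR$ coincides with formula \eqref{eq:20081121-1}, which is $\Tr$.

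\textbf{Step 3 (the trace property, the main obstacle).} The cleanest way I see is by holomorphic deformation, following Kontsevich--Vishik. Pick an invertible positive elliptic operator $P\in\CL^1(M,E)$ (e.g.\ $(1+\Delta)^{1/2}$) and form the holomorphic family $Q(z):=P^{-z}\in \CL^{-z}(M,E)$ of complex powers, which is of log-classical type with $Q(0)=I$. For fixed $A\in\CL^{a,k}(M,E)$ and $B\in\CL^{b,l}(M,E)$ with $a+b\notin\Z$, consider
\[
F(z):=\TR\bigl(A\,Q(z)\,B\bigr)-\TR\bigl(B\,A\,Q(z)\bigr),\qquad z\in\C.
\]
Using that $\TR$ is given by a local cut--off integral whose constant-term extraction depends holomorphically on the order, one shows that $F$ extends to a meromorphic function of $z$ with poles only at the integer shifts where the order $a+b-\Re z$ meets $\Z\cap[-n,\infty)$. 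For $\Re z\gg 0$ the operators $AQ(z)B$ and $BAQ(z)$ are trace class, and by \ref{p:20090511-1} $F(z)=\Tr(AQ(z)B)-\Tr(BAQ(z))=\Tr(AQ(z)B)-\Tr(AQ(z)B)=0$. Hence $F\equiv 0$ by meromorphic continuation, and in particular $F(0)=\TR(AB)-\TR(BA)=0$.

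\textbf{Expected difficulties.} The bookkeeping in Step 1 with amplitudes (as opposed to left symbols) under a nonlinear diffeomorphism is delicate: one really uses that the $\tilde y$--Taylor expansion of the amplitude on the diagonal produces only terms whose radial degrees differ from $a$ by integers, which is again where $a\notin\Z$ saves the day. The genuine hard step is the meromorphic continuation in Step 3, for which I would invoke the parameter--dependent calculus of Section \plref{s:POP} (or the Grubb--Seeley refinement) to obtain the analytic dependence of the local cut--off integral on $z$ and to localize the pole set.
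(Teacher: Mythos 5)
Your Steps 1 and 2 (local cut--off definition, coordinate invariance via Prop.~\plref{p:change-variables} with the correction term killed because $a\notin\Z$ forces $f_{-n,l}\equiv 0$, then gluing via a partition of unity) are precisely the argument the paper itself gives before stating the theorem; this part is correct and the identification of where $a\notin\Z$ enters is exactly right.

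For the trace property (iii) the paper stops at ``we have (essentially) proved the following'' and does not actually supply an argument. Your Step 3 fills that gap with the Kontsevich--Vishik complex--power deformation: define $F(z)=\TR(AP^{-z}B)-\TR(BAP^{-z})$, show it vanishes for $\Re z\gg 0$ by cyclicity of the genuine trace (after splitting $P^{-z}=P^{-z/2}P^{-z/2}$ so the two factors are Hilbert--Schmidt), extend meromorphically, and observe that $z=0$ is not among the potential poles since these live on $a+b+n-\Z_+$ and $a+b\notin\Z$. This is a valid and well--known route, but it is heavy machinery: it requires the meromorphy of $z\mapsto \TR(AP^{-z}B)$, which in turn needs the parametric/Grubb--Seeley calculus (as you acknowledge). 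A more elementary argument, closer in spirit to the tools the paper actually develops (Prop.~\plref{S2-4.4}) and to the cited reference of the author, is purely local: write $\TR(AB)-\TR(BA)$ via the symbol composition formula \eqref{eq:product-symbol} in a chart; the integrand is a sum of terms of the form $\partial_\xi^\alpha a\cdot\partial_x^\alpha b-\partial_\xi^\alpha b\cdot\partial_x^\alpha a$; the $x$--derivatives integrate away because of the compactly supported cut--offs, and for the $\xi$--derivatives Prop.~\plref{S2-4.4} says the regularized integral of a $\xi_j$--derivative equals a boundary term supported on the homogeneity--$(1-n)$ component. Since the radial degrees occurring in $a\#b$ and $b\#a$ are all of the form $a+b-j$, $j\in\Z_+$, and $a+b\notin\Z$, none of these components has degree $1-n$, so all boundary terms vanish. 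This avoids complex powers and meromorphic continuation entirely, and it also makes transparent that the $\log$-degree ($k,l$) plays no role. Both approaches prove the same statement; yours buys a clean global argument at the price of analytic continuation machinery, while the local one buys elementary self--containedness at the price of more symbol bookkeeping.

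One small imprecision worth tightening in your Step 3: the cyclicity $\Tr(AQ(z)B)=\Tr(BAQ(z))$ does not follow directly from Prop.~\plref{p:20090511-1} when $A$ or $B$ are of positive order (hence unbounded). You should factor $Q(z)=Q(z/2)Q(z/2)$ and apply the $\cL^2$--cyclicity twice, first to $\bigl(AQ(z/2)\bigr)\bigl(Q(z/2)B\bigr)$ and then to $\bigl(BAQ(z/2)\bigr)Q(z/2)$, both legitimate for $\Re z$ large.
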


We mention a stunning application of this result
\cite[Cor. 4.1]{KonVis:GDE}. Let $G$ be a domain in the complex
plane and let $A(z),B(z)$ be holomorphic families of operators
in $\CL^{\bullet,k}(M,E)$ with $\ord A(z)=\ord B(z)=z$. We do not
formalize the notion of a holomorphic family here. What we have
in mind are e.g. families of complex powers $A(z)=A^z$. Assume
that $G$ contains points $z$ with $\Re z<-\dim M$.
Then $\TR(A(z))$ is the analytic continuation of 
$\Tr(A(\cdot))\restriction G\cap \bigsetdef{z\in\C}{\Re z<-\dim M}$;
a similar statement
holds for $B(z)$.

If for a point $z_0\in G\setminus \{-n,-n+1,\dots\}$ 
we have $A(z_0)=B(z_0)$ we can conclude
that the value of the analytic continuation of
$\Tr(A(\cdot))\restriction G\cap \bigsetdef{z\in\C}{\Re z<-\dim M}$
to $z_0$ coincides with the value of the corresponding
analytic continuation of $\Tr(B(\cdot))\restriction G\cap
\bigsetdef{z\in\C}{\Re z<-\dim M}$.
Namely, we obviously have $\TR(A(z_0))=\TR(B(z_0))$.
The author does not know of a direct proof of this fact.

Proposition \plref{p:change-variables} shows that if $A$ is of integral
order additional terms show up when making the linear change of coordinates
\eqref{eq:tr-coordinate-invariance}, indicating that $\TR$ cannot
be extended to a trace on the algebra of pseudodifferential operators.
The following no go result shows that the order constraints in Theorem
\plref{t:kont-vish} are indeed sharp:



\begin{prop}\label{p:no-trace} There is no trace $\tau$ on the algebra
$\CL^0(M)$ of classical pseudodifferential operators of order $0$
such that $\tau(A)=\Tr(A)$ if $A\in\CL^{-\infty}(M)$.
\end{prop}
\begin{proof} We reproduce here the very easy proof: from Index Theory
  we use the fact that on $M$ there exists an elliptic system
  $T\in \CL^0(M,\C^r)$ of non--vanishing Fredholm index; in general
  we cannot find a scalar elliptic operator with non--trivial
  index. Let $S\in\CL^0(M,\C^r)$ be a pseudodifferential parametrix (cf.
  Theorem \ref{t:elliptic-regularity}) such
  that $I-ST, I-TS\in \CL^{-\infty}(M,\C^r)$. $\tau$ and
  $\Tr$ extend to traces on 
  $\CL^0(M,\C^r)=\CL^0(M)\otimes \operatorname{M}(r,\C)$ 
  via $\tau(A\otimes X)=\tau(A)\Tr(X)$, $A\in\CL^a(M),X\in\operatorname{M}(r,\C)$
  and $\Tr(X)$ is the usual trace on matrices.
  Since smoothing operators are of trace class one has
  \begin{equation}\label{eq:index-trace-formula}
    \ind T =\Tr(I-ST)-\Tr(I-TS)
  \end{equation}
  and we arrive at the contradiction
\begin{equation}
  \begin{split}
   0&\not=\ind T=\Tr(I-ST)-\Tr(I-TS)\\&=\tau(I-ST)-\tau(I-TS)=\tau([T,S])=0.\qedhere
   \label{eq:trace-contradiction}
 \end{split}
\end{equation}
\end{proof}

\section{Pseudodifferential operators with parameter: Asymptotic expansions}
\label{s:POPAE}

We take up Section \plref{s:POP} and continue the discussion of
pseudodifferential operators with parameter.

\subsection{The Resolvent Expansion} 

The following result is the main technical result needed for the residue
trace. It goes back to \textsc{Minakshisundaram} and \textsc{Pleijel}
\cite{MinPle:SPE} who follow carefully \textsc{Hadamard}'s method
of the construction of a fundamental solution for the wave equation
\cite{Had:PCE}. It is at the heart of the Local Index Theorem and
therefore has received much attention.
In the form stated below it is essentially due to \textsc{Seeley} \cite{See:CPE},
see also \cite{GruSee:WPP}. 
The (straightforward) generalization to $\log$--polyhomogeneous
symbols was done by the author \cite{Les:NRP}. Of the latter the published
version contains annoying typos, the arxiv version is correct.

\begin{theorem}\label{t:parametric-expansion}

\textup{1.} Let $U\subset\R^n$ open, $\Gamma\subset\R^p$ a cone, and 
$a\in\CS^{m,k}(U;\Gamma),$ $m+n<0$,
$A=\Op(a)$. Let $k_A(x;\mu):=\int_{\R^n}a(x,\xi,\mu)\dbar\xi$ be
the Schwartz kernel (cf. Eq. \eqref{eq:Schwartz-kernel}) 
of $A$ on the diagonal. Then 
$k_A\in\CS^{m+n,k}(U;\Gamma)$. In particular there is an asymptotic expansion
\begin{equation}\label{eq20081120-1}
  k_A(x,x;\mu)\sim_{|\mu|\to\infty}\sum_{j=0}^\infty\sum_{l=0}^k e_{m-j,l}(x,\mu/|\mu|) |\mu|^{m+n-j}\log^k|\mu|.
\end{equation}

\textup{2.} Let $M$ be a compact manifold,
$\dim M=:n$, and $A \in \CL^{m,k} (M,E;\Gamma)$. 
If $m + n< 0$ then $A(\mu)$ is trace class for all $\mu \in \Gamma$ and
$ \Tr \, A (\cdot) \in \CS^{m + n,k} (\Gamma)$.
In particular, 
\[\Tr \, A(\mu)\sim_{|\mu|\to\infty}
\sum\limits_{j=0}^\infty\sum\limits_{l=0}^k e_{m-j,l}(\mu/|\mu|) |\mu|^{m+n-j}\log^k|\mu|.
\]

\textup{3.} Let $P\in\CL^m(M,E)$ be an elliptic classical pseudodifferential operator 
and assume for simplicity that with respect to some Riemannian structure on $M$ and 
some Hermitian structure on $E$
the operator $P$ is self--adjoint and non--negative. Furthermore, let $B\in\CL^{b,k}(M,E)$
be a pseudodifferential operator. Let $\Lambda=\bigsetdef{\gl\in\C}{|\arg\gl|\ge \eps}$
be a sector in $\C\setminus\R_+$. Then for $N>(b+n)/m, n:=\dim M,$ the
operator $B(P-\gl)^ {-N}$ is of trace class and there is an asymptotic expansion
\begin{equation}\label{eq20081120-3}\begin{split}
\Tr (B(P-\lambda)^{-N})\sim_{\lambda\to \infty}&\sum_{j=0}^\infty\sum_{l=0}^{k+1} c_{jl} 
\lambda^{\frac{n+b-j}{m}-N}\log^l \gl+\\
    &+ \sum_{j=0}^\infty d_j\; \lambda^{-j-N}
  \end{split},\quad \gl\in\Lambda.
\end{equation}
Furthermore, $c_{j,k+1}=0$ if $(j-b-n)/m\not\in\Z_+$.
\end{theorem}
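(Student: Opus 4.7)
The plan is to prove (1) by a direct rescaling argument, reduce (2) to (1) via a partition of unity, and obtain (3) by exhibiting $B(P-\lambda)^{-N}$ as a parameter-dependent log-polyhomogeneous operator so that (2) applies.

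For (1), the key device is the substitution $\xi=|\mu|\eta$:
\begin{equation*}
k_A(x,x;\mu) = |\mu|^n \int_{\R^n} a(x,|\mu|\eta,\mu)\,\dbar\eta.
\end{equation*}
I would replace $a$ by a partial log-polyhomogeneous expansion $\sum_{j<J,l\le k}b_{m-j,l}$ plus a remainder in $\CS^{m-J,k}$. Each homogeneous piece evaluated at $(|\mu|\eta,\mu)$, using $|(|\mu|\eta,\mu)|=|\mu|\sqrt{|\eta|^2+1}$ and the unit direction $(\eta,\hat\mu)/\sqrt{|\eta|^2+1}$ with $\hat\mu:=\mu/|\mu|$, factors on $|(\xi,\mu)|\ge 1$ as
\begin{equation*}
|\mu|^{m-j}(|\eta|^2+1)^{(m-j)/2}\,\tilde b_{m-j,l}\!\left(x,\tfrac{(\eta,\hat\mu)}{\sqrt{|\eta|^2+1}}\right)\bigl(\log|\mu|+\tfrac12\log(|\eta|^2+1)\bigr)^l.
\end{equation*}
Binomial expansion of the last factor and integration over $\eta$ (the integrals converge since $m-j+n<0$ for all relevant $j$) produces, for each $(j,l)$, a finite sum $\sum_{l'\le l}c_{m-j,l'}(x,\hat\mu)|\mu|^{m+n-j}\log^{l'}|\mu|$, giving the claimed expansion. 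The bounded-$\xi$ region, where the symbol is merely smooth rather than homogeneous, is dominated by $|\mu|^{m-j}\log^l|\mu|$ and thus absorbed into lower-order terms, and the $\CS^{m-J,k}$ remainder contributes $O(|\mu|^{m+n-J}\log^k|\mu|)$ by the same analysis.

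For (2), I would take a finite atlas of $M$ with a subordinate partition of unity $\{\varphi_i\}$ and cutoffs $\psi_i$ with $\psi_i\varphi_i=\varphi_i$, and write $\Tr A(\mu)=\sum_i\Tr(\varphi_iA(\mu)\psi_i)+\sum_i\Tr(\varphi_iA(\mu)(1-\psi_i))$. The first sum is handled term by term by (1) applied to the local complete symbol of $\varphi_iA(\mu)\psi_i$. Each summand in the second sum is parametric smoothing (its kernel vanishes near the diagonal), and Theorem~\plref{t:l2continuity} combined with standard Sobolev-to-trace-class estimates gives trace norm decay of any order in $|\mu|$, which is absorbed into the error.

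For (3), the parametric calculus of Section~\plref{s:POP} (see Remark~\ref{rem:20081120}) realizes $(P-\lambda)^{-N}$ as a parametric operator: when $P$ is a differential operator the symbol $p(x,\xi)-\lambda$ lies directly in the classical parametric class and a standard inductive parametrix construction yields $(P-\lambda)^{-N}\in\CL^{-mN}(M,E;\Lambda)$; when $P$ is genuinely pseudodifferential, Grubb--Seeley's weakly parametric calculus is required, and each iteration produces one extra logarithm, which is the source of the $\log\lambda$ factors in \eqref{eq20081120-3}. In both cases $B(P-\lambda)^{-N}$ lies in a log-polyhomogeneous parametric class of order $b-mN<-n$ with log-degree at most $k+1$, so (2) applies; expressing the resulting expansion in $\lambda$ via $|\mu|=|\lambda|^{1/m}$ yields the non-integer series $\lambda^{(n+b-j)/m-N}\log^l\lambda$, while the integer series $\lambda^{-j-N}$ (without logs) comes from the smooth Cauchy-type remainders in the parametrix construction (the low-frequency region where the symbol is smooth rather than homogeneous). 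The vanishing $c_{j,k+1}=0$ when $(j-b-n)/m\notin\Z_+$ reflects that the top log-power in the weakly parametric expansion is created only at orders forcing an integer exponent on $\lambda$.

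The main obstacle is the bookkeeping in (1) — controlling uniformity of the remainder and matching the $\eta$-integrals into the target log-polyhomogeneous class — and, in (3), invoking the Grubb--Seeley weakly parametric calculus in a controlled way without redeveloping it. Once (1) is settled cleanly, parts (2) and (3) are technically careful but conceptually standard.
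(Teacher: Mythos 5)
Your treatment of parts 1 and 2 matches the paper's approach in substance: the rescaling $\xi\mapsto r\xi$ (the paper) and $\xi=|\mu|\eta$ (your version) are the same device for peeling off the power of $|\mu|$ from the homogeneous pieces of the kernel, with the remainder handled by a crude estimate. Part 2 is just local-to-global; the paper argues by integrating the uniform local expansion over charts, and your partition-of-unity splitting into near-diagonal and off-diagonal pieces is an equivalent reformulation. So far so good.

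Part 3 has a genuine gap, and it is exactly at the step \emph{``$B(P-\lambda)^{-N}$ lies in a log-polyhomogeneous parametric class of order $b-mN$ with log-degree at most $k+1$, so (2) applies.''} This is false, even when $P$ is a differential operator and the calculus is the ordinary parametric one. A non-parametric pseudodifferential operator $B$ of order $b$ is \emph{not} a parametric operator of order $b$: a parametric symbol must satisfy $|\partial_\xi^\gamma b(x,\xi,\mu)|\le C(1+|\xi|+|\mu|)^{b-|\gamma|}$, and for a $\mu$-independent $b(x,\xi)$ that forces $\partial_\xi^\gamma b\equiv 0$ once $b-|\gamma|<0$, i.e.~$B$ would have to be a \emph{differential} operator of degree $\le b$. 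Consequently the composite $B(P-\lambda)^{-N}$ is not in $\CL^{\bullet,\bullet}(M,E;\Lambda)$, and part (2) simply cannot be invoked. Indeed, if one could apply (2) to the product with log-degree $k$, the expansion would contain only $\log^l\lambda$ for $l\le k$, whereas the theorem asserts $\log^{k+1}\lambda$ — which already appears for classical $B$ ($k=0$) and differential $P$. That by itself shows the reduction to (2) cannot be right.

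A second, related inaccuracy: you locate the source of the extra logarithm in the iterations of the Grubb--Seeley weakly parametric calculus (``each iteration produces one extra logarithm''), which you invoke only when $P$ is genuinely pseudodifferential. But the extra $\log\lambda$ already arises for differential $P$ and classical $B$, so the mechanism cannot be the weakly parametric iterations. The real mechanism, and the content of the paper's Lemma~\ref{l:expansion-lemma}, is the behaviour of the non-parametric integral $\int_{\R^n}B(\xi)\,Q(\xi,\lambda)\,d\xi$, where $B$ is a single log-polyhomogeneous symbol piece (log-degree $k$) and $Q$ is a single \emph{parametric} homogeneous piece. Splitting the domain at $|\xi|\le 1$, $1\le|\xi|\le\lambda$, $|\xi|\ge\lambda$ and Taylor-expanding $Q(\xi/\lambda,1)$ at $\eta=0$, the middle region contributes $\int_1^\lambda r^{b+n+j-1}\log^k r\,dr$, which equals $\tfrac{1}{k+1}\log^{k+1}\lambda$ precisely when $b+n+j=0$. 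That integral is the origin of the $\log^{k+1}$ term and also explains why $c_{j,k+1}=0$ unless $(j-b-n)/m\in\Z_+$. Without this (or an equivalent) device, there is no way to get the stated asymptotics from (1) or (2), and your proof of (3) does not go through.
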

\begin{proof} 
We present a proof of 1. and 2. and sketch the proof of 3. in a special case.

Since $a \in \CS^{m,k} (U;\Gamma)$ we have Eq. \eqref{ML-G2.2}.
Thus we write
  \begin{equation}
     a = \sum_{j=0}^{N} \, a_{m-j} \, + R_N,
  \end{equation}
  with $R_N \in \sym^{m-N} (U;\Gamma)$.
  In fact, $R_N\in \sym^{m-N-1+\eps}(U;\Gamma)$ for every $\eps>0$,
  but we don't need this below.
  Now pick $L \subset \Gamma, K\subset U,$ compact and a multi--index $\alpha$. Then
  for $x\in K$ the kernel $k_{A,N}$ of $R_N$ satisfies
  \begin{equation}\label{eq20081120-2}
  \begin{split}
    \Bigl| \partial^{\alpha}_{\mu} &k_{A,N}(x,x;\mu) \Bigr|\\
    & = \Bigl| \int_{\R^n}  \partial_{\mu}^{\alpha} R_N(x , \xi , \mu ) 
                 \dbar \xi  \Bigr| \\
    & \leq C_{\alpha, K , L}  \int_{\R^n} ( 1 + ( |\xi|^2
      +|\mu|^2)^{1/2} )^{m -|\alpha| - N} \, \dbar\xi \\
    & \leq C_{\alpha,K,L} (1 + |\mu|)^{m+n - |\alpha| - N }.
  \end{split}
  \end{equation}
  Now consider one of the summands of \eqref{ML-G2.2}. We write
  it in the form 
  \begin{equation}
  b_{m-j,l}(x,\xi,\mu)=\tilde b_{m-j,l}(x,\xi,\mu) \log^l(|\xi|^2+|\mu|^2),
  \end{equation}
  with
  \begin{equation}
  \tilde b_{m-j,l}(x,r\xi,r\mu)=r^{m-j}\tilde b_{m-j,l}(x,\xi,\mu),
  \quad \text{ for }
  r\ge 1, |\xi|^2+|\mu|^2\ge 1.
  \end{equation}
  Then the contribution $k_{m-j,l}$
  of $b_{m-j,l}$ to the kernel of $A$ satisfies
  \begin{equation}
  \begin{split}
    k_{m-j,l}&(x,x;r \mu)\\
      & = \int_{\R^n}
    \tilde b_{m-j,l}(x, \xi, r\mu ) \,\log^l(|\xi|^2+r^2|\mu|^2)\, \dbar \xi\\
    & = r^{m-j} \, \int_{\R^n} \tilde b_{m-j,l}(x , r^{-1} \xi ,\mu ) \bigl(\log r^2+
         \log(|r^{-1}\xi|^2+|\mu|^2)\bigr)^l\,\dbar \xi \\
	 & = r^{m+n-j} \int_{\R^n} \tilde b_{m-j,l}(x,\xi,\mu) \bigl(\log r^2+
	 \log(|\xi|^2+|\mu|^2)\bigr)^l\,\dbar \xi,
  \end{split}
  \end{equation}
  proving the expansion \eqref{eq20081120-1}.

  2. follows simply by integrating \eqref{eq20081120-1}. In view of   
  \eqref{eq20081120-2} the expansion \eqref{eq20081120-1} is uniform
  on compact subsets of $U$ and hence may be integrated over compact
  subsets. Covering the compact manifold $M$ by finitely many charts
  then gives the claim.

3. We cannot give a full proof of 3. here; but we at least want to explain
where the additional $\log$ terms in \eqref{eq20081120-3} come from.
Note that even if $B\in\CL^b(M,E)$ is classical there are $\log$ terms
in \eqref{eq20081120-3}. In general the highest $\log$ power occurring
on the rhs of \eqref{eq20081120-3} is one higher than the $\log$ degree
of $B$.


For simplicity let us assume that $P$ is a differential operator. This
ensures that $(P-\gl^m)^{-N}$ (note the $\gl^m$ instead of $\gl$) is in the parametric calculus
(cf. Remarks \plref{rem:20081120} 2., \plref{ex:20081120}). 
We first describe the local expansion of the symbol of $B(P-\gl^m)^{-N}$.
To obtain the claim as stated one then has to replace $\gl^m$ by $\gl$
and integrate over $M$:
choose a chart and denote the complete symbol of $B$ by $b(x,\xi)$
and the complete parametric symbol of $(P-\gl^m)^{-N}$ by
$q(x,\xi,\gl)$. Then the symbol of the product is given by
\begin{equation}\label{eq:product-symbol}
  (b*q)(x,\xi,\gl)\sim\sum_{\ga\in\Z_+^n} \frac{i^{-\ga}}{\ga!}
  \bigl(\partial_\xi^\ga b(x,\xi)\bigr)\bigl(\partial_x^\ga
  q(x,\xi,\gl)\bigr).
\end{equation}
Expanding the rhs into its homogeneous components gives
\begin{equation}\label{eq:product-symbol-a}
  \begin{split}
  (b&*q)(x,\xi,\gl)\\
  &\sim\sum_{j=0}^\infty\sum_{|\ga|+l+l'=j} \frac{i^{-\ga}}{\ga!}
  \underbrace{\underbrace{\bigl(\partial_\xi^\ga b_{b-l}(x,\xi)\bigr)}_{(b-l-|\ga|)-\text{(log)homogeneous}}
  \underbrace{\bigl(\partial_x^\ga
  q_{-mN-l'}(x,\xi,\gl)\bigr)}_{(-mN-l')-\text{homogeneous}}
  }_{(b-mN-j)-\text{(log)homogeneous}}.
\end{split}
\end{equation}
The contribution to the Schwartz kernel of $B(P-\gl^m)^{-N}$ of a summand
is given by
\begin{equation}\label{eq20081120-4}
    \frac{i^{-\ga}}{\ga!} \int_{\R^n}
  \bigl(\partial_\xi^\ga b_{b-l}(x,\xi)\bigr) \bigl(\partial_x^\ga
  q_{-mN-l'}(x,\xi,\gl)\bigr)\, \dbar\xi.  
\end{equation}
We will see that the asymptotic expansion of each of these integrals
a priori contributes to the term $\gl^{-N}$ in the expansion \eqref{eq20081120-3}. So
additional considerations, which we will not present here, are
necessary to show that by expanding the individual integrals
\eqref{eq20081120-4} one indeed obtains the asymptotic expansion
\eqref{eq20081120-3}.

The asymptotic expansion of \eqref{eq20081120-4} will be singled out
as Lemma \plref{l:expansion-lemma} below. The proof of it will in
particular explain why the highest possible $\log$-power in
\eqref{eq20081120-3} is one higher than the $\log$-degree of $B$
\end{proof}

The following expansion Lemma is maybe of interest in its own right. 
Its proof will explain the occurrence of higher $\log$ powers in the resolvent respectively
heat expansions. The homogeneous version of the Lemma can again be found
in \cite{GruSee:WPP}. We generalize it here slightly to the
$\log$--polyhomogeneous setting (cf. \cite{Les:NRP}).

\begin{lemma}\label{l:expansion-lemma}
Let $B\in\cinf{\R^n}, Q\in\cinf{\R^n\times [1,\infty)}$
and assume that $B,Q$ have the following properties
\begin{equation}
 \begin{split}
    B(\xi)&= \tilde B(\xi/|\xi|) |\xi|^b \log^k|\xi|,\quad  |\xi|\ge 1,\\
    Q(r\xi,r\gl)&=r^q Q(\xi,\gl),\quad r\ge 1, \gl\ge 1, \\
    |Q(\xi,1)| &\le C (|\xi|+1)^{-q},
\end{split}
\end{equation}
where $b,q\in\R$ and $b+q+n<0$. 
Then the following asymptotic expansion holds:
\begin{equation}\label{eq:expansion-lemma}
  \begin{split}
      F(\gl)&= \int_{\R^n} B(\xi)Q(\xi,\gl) d\xi\\
   &\sim_{\gl\to\infty} \sum_{j=0}^{k+1} c_j \gl^{q+b+n}\log^j\gl +\sum_{j=0}^\infty d_j \gl^{q-j}.
  \end{split}
\end{equation}
$c_{k+1}=0$ if $b$ is not an integer $\le -n$.

The coefficients $c_j,d_j$ will be explained in the proof.
\end{lemma}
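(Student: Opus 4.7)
The plan is to reduce $F(\gl)$ to a single integral with an explicit scaling, via the substitution $\xi=\gl\eta$ together with the homogeneity $Q(\gl\eta,\gl)=\gl^q Q(\eta,1)$ (valid for $\gl\ge 1$). This gives
\[F(\gl)=\gl^{n+q}\int_{\R^n}B(\gl\eta)\,Q(\eta,1)\,d\eta=:\gl^{n+q}G(\gl).\]
I would then split $G=G_<+G_>$ at $|\eta|=1/\gl$, the natural scale where the homogeneous structure of $B$ kicks in: on $|\eta|\ge 1/\gl$ one has $B(\gl\eta)=\tilde B(\eta/|\eta|)\gl^b|\eta|^b(\log\gl+\log|\eta|)^k$, while on $|\eta|\le 1/\gl$ the factor $B(\gl\eta)$ is $B$ evaluated at a small argument.

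For $G_<$, substituting back $\xi=\gl\eta$ yields $G_<(\gl)=\gl^{-n}\int_{|\xi|\le 1}B(\xi)\,Q(\xi/\gl,1)\,d\xi$; Taylor expanding $Q(\cdot,1)\in C^\infty(\R^n)$ at the origin and integrating term by term produces $G_<(\gl)\sim\sum_\alpha d_\alpha\,\gl^{-n-|\alpha|}$, which after multiplication by $\gl^{n+q}$ contributes the pure-power terms $\sum_\alpha d_\alpha\,\gl^{q-|\alpha|}$. For $G_>$, the binomial expansion of $(\log\gl+\log|\eta|)^k$ gives
\[\gl^{n+q}G_>(\gl)=\gl^{n+q+b}\sum_{j=0}^k\binom{k}{j}\log^j\gl\cdot K_j(\gl),\]
where
\[K_j(\gl):=\int_{|\eta|\ge 1/\gl}\tilde B(\eta/|\eta|)|\eta|^b\log^{k-j}|\eta|\,Q(\eta,1)\,d\eta;\]
convergence at infinity is secured by $b+q+n<0$ together with the bound on $Q(\cdot,1)$.

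The core work is to extract the asymptotic behaviour of $K_j(\gl)$ as $\gl\to\infty$. I would write $K_j(\gl)=K_j^\infty-\int_{|\eta|\le 1/\gl}(\cdots)$ (where $K_j^\infty$ is the full integral over $\R^n$, regularized at $0$ if necessary), substitute $\eta=\xi/\gl$ in the subtracted piece, Taylor expand $Q(\xi/\gl,1)$ at the origin and expand $(\log|\xi|-\log\gl)^{k-j}$ binomially. Everything reduces to elementary radial integrals $\int_0^{1/\gl}r^{b+n+|\alpha|-1}\log^m r\,dr$. In the generic case $b+n+|\alpha|\ne 0$ (i.e.\ $b+n\notin\Z_{\le 0}$), each such integral evaluates to $\gl^{-(b+n+|\alpha|)}$ times a polynomial in $\log\gl$ of degree $\le k-j$, so the total contribution of the subtracted piece takes the form $\sum_{l\ge 0}\log^l\gl\cdot\bigl(\sum_{j=0}^l\binom{l}{j}(-1)^{l-j}\bigr)(\cdots)\gl^{q-|\alpha|}$. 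The inner combinatorial factor is $\delta_{l,0}$, so the spurious logarithms cancel and only pure powers $\gl^{q-|\alpha|}$ survive, combining with the $G_<$ contribution. In the exceptional case $b+n\in\Z_{\le 0}$, the radial integral for $b+n+|\alpha|=0$ becomes $\int r^{-1}\log^m r\,dr$, producing one extra logarithm, and, for $j=k$, the $c_{k+1}\log^{k+1}\gl$ term—this is the content of the remark that $c_{k+1}=0$ unless $b$ is an integer $\le -n$. The main terms $K_j^\infty$ then produce $\sum_{j=0}^k c_j\,\gl^{q+b+n}\log^j\gl$.

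The principal obstacle is the combinatorial bookkeeping of log powers and the verification of the cancellation via $\sum_{j=0}^l\binom{l}{j}(-1)^{l-j}=\delta_{l,0}$; conceptually this cancellation is forced, since the cutoff $|\eta|=1/\gl$ is artificial and cannot introduce genuine logarithmic corrections to the pure-power part of the expansion. Handling the Taylor remainders is routine: at each order the error contributes a strictly larger negative power of $\gl$, so the expansion can be carried out to arbitrary order.
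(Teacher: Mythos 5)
Your overall strategy---scale out $\gl$ via $\xi=\gl\eta$, split at the natural scale $|\eta|=1/\gl$, and exploit the homogeneity of $B$ on the outer piece---is a legitimate reorganization of the paper's proof. In fact the paper works directly in the $\xi$--variable and splits the domain into \emph{three} regions $|\xi|\le 1$, $1\le|\xi|\le\gl$, $\gl\le|\xi|$; under your change of variables these are exactly $|\eta|\le 1/\gl$, $1/\gl\le|\eta|\le 1$, $|\eta|\ge 1$. Your $G_<$ is the first region, and your $G_>$ lumps the last two together. The conceptual points you make (the $\log^{k+1}$ enters only when $b+n+|\alpha|=0$, i.e.\ $b\in\Z_{\le -n}$; the artificial cutoff cannot create genuine log--corrections to the power part, and this is witnessed by a Vandermonde-type cancellation) are correct and well worth recording.

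There is, however, a genuine gap in the step $K_j(\gl)=K_j^\infty-\int_{|\eta|\le 1/\gl}(\cdots)$, and it bites precisely in the exceptional case you need to analyze. When $b+n\le 0$ the integrand $\tilde B(\eta/|\eta|)|\eta|^b\log^{k-j}|\eta|\,Q(\eta,1)$ is \emph{not} integrable at the origin, so both $K_j^\infty$ and (after Taylor--expanding $Q$) the low--order pieces of the subtracted ball integral are individually divergent; the decomposition is formal and the parenthetical ``regularized at $0$ if necessary'' cannot be filled in without changing the identity. The concrete symptom is your claim that the radial integral $\int_0^{1/\gl}r^{b+n+|\alpha|-1}\log^m r\,dr$ ``becomes $\int r^{-1}\log^m r\,dr$, producing one extra logarithm'' when $b+n+|\alpha|=0$: that integral diverges at the lower endpoint, so it produces no asymptotic term at all. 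The extra logarithm in fact comes from the \emph{annulus} integral $\int_{1/\gl}^1 r^{-1}\log^m r\,dr=\frac{(-1)^m}{m+1}\log^{m+1}\gl$, which your decomposition never isolates. The fix is to split $K_j(\gl)$ at $|\eta|=1$ rather than subtracting a full-space integral: write $K_j(\gl)=\int_{|\eta|\ge 1}(\cdots)+\int_{1/\gl\le|\eta|\le 1}(\cdots)$, where the first piece is a $\gl$--independent constant (convergent at infinity by the $b+q+n<0$ hypothesis and the bound on $Q(\cdot,1)$) and the second is an annulus integral, finite for each $\gl$. Taylor--expanding $Q$ in the annulus then leads to the radial integrals $\int_{1/\gl}^1 r^{b+n+|\alpha|-1}\log^m r\,dr$, all of which are well defined, and the exceptional case $b+n+|\alpha|=0$ yields the $\log^{m+1}\gl$ as claimed. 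This is precisely the paper's treatment of its middle region $1\le|\xi|\le\gl$, including the device of extending the Taylor \emph{remainder} (but not the polynomial part) over the inner ball, which is harmless because the remainder vanishes to high order at $0$. Once this splitting is adopted your binomial bookkeeping and the cancellation $\sum_j\binom{k}{j}\binom{k-j}{i}(-1)^{k-j-i}=\binom{k}{i}\delta_{i,k}$ go through and the argument is essentially equivalent to the paper's.
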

\begin{proof}
The integral on the lhs of \eqref{eq:expansion-lemma} exists since
$b+q+n<0$.

We split the domain of integration into the three regions:\\
$1\le \gl\le |\xi|, |\xi|\le 1,$ and $1\le |\xi|\le \gl$.

\paragraph*{$ 1\le\gl\le |\xi|$:} Here we are in the domain of homogeneity
and a change of variables yields
\begin{equation}
\begin{split}
             &\int_{\gl\le |\xi|} B(\xi)Q(\xi,\gl) d\xi\\
       &= \gl^{q}  \int_{\gl\le|\xi|} 
               \tilde B(\xi/|\xi|) |\xi|^b \bigl(\log^k|\xi|\bigr) Q(\xi/\gl,1) d\xi\\
        &=\gl^{q+b+n} \int_{1\le |\xi|} 
       \tilde B(\xi/|\xi|) |\xi|^b \bigl(\log\gl+\log |\xi|\bigr)^kQ(\xi,1) d\xi,\\
    &=\sum_{j=0}^k \ga_j \gl^{q+b+n}\log^j\gl,
\end{split}
\end{equation}
giving a contribution to the coefficient $c_j$ for $0\le j\le k$.
\paragraph*{$ |\xi|\le 1$:} For the remaining two cases we employ
the Taylor expansion of the smooth function $\eta\mapsto Q(\eta,1)$
about $\eta=0$:
\begin{equation}\label{eq:Taylor-q}
   Q(\eta,1)= \sum_{j=0}^N Q_j(\eta) +R_N(\eta),
\end{equation}
where $Q_j(\eta)\in\C[\eta_1,,\ldots,\eta_n]$ are homogeneous polynomials of degree
$j$ and $R_N$ is a smooth function satisfying $R_N(\eta)=O(|\eta|^{N+1}),\;\eta\to 0$.
Respectively, for $\xi\in\R^n, \gl\ge 1$,
\begin{equation}\label{eq:Taylor-ql}
   Q(\xi,\gl)=Q(\xi/\gl,1)\;\gl^q =\sum_{j=0}^N Q_j(\xi)\;\gl^{q-j} +
R_N(\xi/\gl)\;\gl^q.
\end{equation}

Plugging \eqref{eq:Taylor-ql} into the integral for $|\xi|\le 1$ we find
\begin{equation}
   \begin{split}
      \int_{|\xi|\le 1} &B(\xi) Q(\xi,\gl)d\xi=\\
     &=\sum_{j=0}^N \int_{|\xi|\le 1} B(\xi)Q_j(\xi) d\xi\; \gl^{q-j}  +O(\gl^{q-N-1}),
\quad \gl\to\infty,
   \end{split}
\end{equation}
giving a contribution to the coefficient $d_j$.

\paragraph*{$1\le |\xi|\le \gl$:} We again use the Taylor expansion
\eqref{eq:Taylor-ql} with $N$ large enough such that $b+N+1>-n$
to ensure  $\int_{|\xi|\le 1} |\xi|^b \log^j|\xi| \;|R_N(\xi)|d\xi<\infty$ for all $j$.
Let $B^h(\xi):=\tilde B(\xi/|\xi|) |\xi|^b\log^k|\xi|$ be the homogeneous extension 
of $B(\xi)$ to all $\xi\not=0$.
Then
\begin{equation}
    \int_{|\xi|\le 1} \bigl(|B(\xi)|+|B^h(\xi)|\bigr) \gl^q |R_N(\xi/\gl)|d\xi
       =O(\gl^{q-N-1}),\quad \gl\to\infty,
\end{equation}
and thus
\begin{equation}
   \begin{split}
&\int_{1\le |\xi|\le \gl} B(\xi) \gl^q R_N(\xi/\gl) d\xi\\
     &= \int_{0\le |\xi|\le \gl} B^h(\xi) \gl^q R_N(\xi/\gl)d\xi+O(\gl^{q-N-1})\\
     &= \int_{|\xi|\le 1} \tilde B(\xi/|\xi|) |\xi|^b \bigl(\log\gl+\log|\xi|\bigr)^k R_N(\xi) d\xi\; \gl^{q+b+n}+\\
    &\quad +O(\gl^{q-N-1}),
\quad \gl\to\infty.
   \end{split}
\end{equation}
So the contribution of the ``remainder'' $R_N$ to the expansion is not
small, rather it contributes to the coefficient $c_j$ of the
$\gl^{q+b+n}\log^j\gl$ term for $0\le j\le k$. Note that so far we have not obtained
any contribution to the coefficient $c_{k+1}$. 

Such a contribution will show up only now when
we finally deal with the summands in the Taylor expansion. 
Using polar coordinates we find
\begin{equation}\label{eq:expansion-lemma-proof}
   \begin{split}
      &\int_{1\le |\xi|\le \gl} B(\xi) Q_j(\xi)d\xi\; \gl^{q-j}\\
        &= \gl^{q-j}\int_1^\gl \int_{S^{n-1}} \tilde B(\go) r^b \bigl(\log^k r\bigr)
               Q_j(r\go) r^{n-1}d\vol_{S^{n-1}}(\go) dr \\
       &= C_j \gl^{q-j} \int_1^\gl r^{b+n-1+j}\log^k r dr\\
       &=C_j \gl^{q-j} \begin{cases} 
       \sum\limits_{\sigma=0}^k \ga'_\sigma\gl^{b+n+j} \log^\sigma\gl+\beta_j ,& b+n+j\not=0,\\[1em]
       \frac{1}{k+1}\log^{k+1}\gl, &     b+n+j=0.
                       \end{cases}
   \end{split}
\end{equation}
As a side remark note the explicit formula
\begin{multline}\label{eq:log-int-explicit}
   \int_1^\gl r^\ga \log^k r dr\\
= \begin{cases}
\sum\limits_{j=0}^k \frac{(-1)^j k!}{(k-j)!(\ga+1)^{j+1}} \gl^{\ga+1}\log^{k-j}\gl+\frac{(-1)^{k+1}k!}{(\ga+1)^{k+1}},& \ga\not=-1,\\
\frac{1}{k+1}\log^{k+1}\gl,                 & \ga=-1.
\end{cases}
\end{multline}
The constant term in \eqref{eq:log-int-explicit} respectively $\beta_j$ on the rhs of
\eqref{eq:expansion-lemma-proof} was omitted in \cite[Eq. 3.16]{Les:NRP}.
Fortunately the error was inconsequential for the formulation of the expansion result
because $\beta_j$ is just another contribution to the coefficient $d_j$.
\end{proof}

\subsection{Resolvent expansion vs. heat expansion}
\label{ss:resolvent-expansion}

\setlength{\unitlength}{1.0cm}
\begin{figure}
\begin{picture}(5.0,5.0)
\put(2.5,0){\vector(0,1){5.0}}   
\put(0,2.5){\vector(1,0){5.0}}
\put(2.5,2.5){\linethickness{1mm}\line(1,0){2.5}}

\put(2.5,3.5){\vector(1,1){1.5}} 
\put(2.5,1.5){\line(1,-1){1.5}}  
\qbezier(2.5,1.5) (1.5,2.5) (2.5,3.5) 
\end{picture}
\caption{
\label{figureone} Contour of integration for calculating $Be^{-tP}$ from 
the resolvent.}
\end{figure}
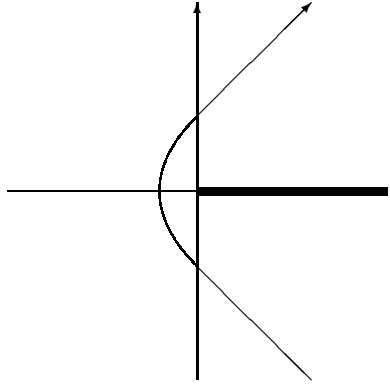

From the resolvent expansion one can easily derive the heat expansion
and the meromorphic continuation of the $\zeta$--function. In fact
under a mild additional assumption the resolvent expansion can be
derived from the heat expansion of the meromorphic continuation of 
the $\zeta$--function (cf. e.g. \textsc{Lesch} \cite[Theorem 5.1.4 and 5.1.5]{Les:OFT},
\textsc{Br\"uning--Lesch} \cite[Lemma 2.1 and 2.2]{BruLes:EIC}).

Let $B,P$ be as above. Next let $\gamma$ be a contour in the complex plane
as sketched in Figure \ref{figureone}. Then $B e^{-tP}$ has the following contour integral
representation:

\begin{equation}\label{eq:heat-contour-rep}
\begin{split}
    B e^{-tP}&= \frac{-1}{2\pi i} \int_\gamma e^{-t\gl} B(P-\gl)\ii d\gl\\
        &= -(-t)^{-N+1} \frac{(N-1)!}{2\pi i}\int_\gamma e^{-t\gl} B(P-\gl)^{-N}d\gl.
\end{split}
\end{equation}

Taking the trace on both sides and plugging in the asymptotic expansion
of $\Tr(B(P-\gl)^{-N})$ one easily finds
\begin{equation}\label{eq:log-heat-expansion}
  \Tr(B e^{-tP})\sim_{t\to 0+}\sum_{j=0}^\infty\sum_{l=0}^{k+1} a_{jl}(B,P) t^{\frac{j-b-n}{m}}\log^l t
     +\sum_{j= 0}^\infty  \tilde d_j(B,P)\; t^j.
\end{equation}
$a_{j,k+1}=0$ if $(j-b-n)/m\not\in\Z_+$.

\subsection{Heat expansion vs. $\zeta$--function}
\label{ss:heat-zeta}

Finally we briefly explain how the meromorphic continuation of the
$\zeta$--function can be obtained from the heat expansion. As before
let $B\in\CL^{b,k}(M,E)$ and let $P\in\CL^m(M,E)$ be an elliptic
operator which is self--adjoint with respect to some
Riemannian structure on $M$ and some Hermitian structure on $E$.
Furthermore, assume that $P\ge 0$ is non--negative.
Let $\Pi_{\ker P}$ be the orthogonal projection onto $\ker P$
and put for $\Re s>0$
\begin{equation}\label{eq:20090511-4}
P^{-s}:= \bigl(I-\Pi_{\ker P}\bigr)\bigl(P+\Pi_{\ker P}\bigr)^{-s}.
\end{equation}
I.e. $P^{-s}\restriction \ker P=0$ and for $\xi\in\im P$
we let $P^{-s}\xi$ be the unique $\eta\in \ker P^\perp$
with $P^s\eta=\xi$.
The $\zeta$--function of $(B,P)$ is defined
(up to a $\Gamma$--factor) as the \emph{Mellin transform} of
the heat trace $\Tr(B (I-\Pi_{\ker P})e^{-tP})$:
\begin{equation}
  \begin{split}
       \zeta(B,P;s)&=\Tr\bigl(B P^{-s}\bigr)\\
                   &= \frac{1}{\Gamma(s)}\int_0^\infty t^{s-1} \Tr\bigl(B
(I-\Pi_{\ker P})e^{-tP}\bigr) dt,\quad \Re s\gg 0.
  \end{split}
  \label{eq:zeta-function}
\end{equation}
$\Tr\bigl(B(I-\Pi_{\ker P}) e^{-tP}\bigr)$ decays exponentially as
$t\to\infty$.
The meromorphic continuation is thus obtained by plugging the short time
asymptotic expansion \eqref{eq:log-heat-expansion} into the
rhs of \eqref{eq:zeta-function} (cf. e.g. \cite[Sec. II.1]{Les:OFT}):
\begin{equation}
  \begin{split}
  \Gamma(s)\zeta(B,P;s)&=\int_0^1 t^{s-1} \Tr(B e^{-tP}) dt \\
   &\qquad-\frac 1s\Tr\bigl(B\Pi_{\ker P}\bigr)+\text{ Entire function}(s),\\
                       &\sim\sum_{j=0}^\infty \sum_{j=0}^{k+1} 
		           \frac{a_{jl}'(B,P)}{(s-\frac{n+b-j}{m})^{j+1}}+\sum_{j=0}^\infty
\frac{\tilde d_j'(B,P)}{s+j},
  \end{split}	
  \label{eq:zeta-pole-structure}
\end{equation}
where the formal sum on the right is meant to display the principal
parts of the Laurent series at the poles of $\Gamma(s)\zeta(B,P;s)$.

The $\Gamma$--function has simple poles in $\Z_-=\{0,-1,-2,\dots\}$, hence
the $\tilde d_j'$ do not contribute to the poles of $\zeta(B,P;s)$. The $a_{jl}'$ depend linearly on the $a_{jl}$ and consequently
$a_{j,k+1}'=0$ if $(n+b-j)/m$ is \emph{not} a pole of the
$\Gamma$--function. Let us summarize:

\begin{theorem}\label{t:zeta-meromorphic} Let $M$ be a compact closed
  manifold of dimension $n$. 
  Let  $B\in\CL^{b,k}(M,E)$ and let $P\in\CL^m(M,E)$ be an elliptic
operator which is self--adjoint with respect to some
Riemannian structure on $M$ and some Hermitian structure on $E$.
Then the $\zeta$--function $\zeta(B,P;s)$ is meromorphic for
$s\in\C$ with poles of order at most $k+1$ in $(n+b-j)/m$.
\end{theorem}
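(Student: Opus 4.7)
The plan is to work directly from the Mellin representation in equation \eqref{eq:zeta-function} and split the $t$-integral as $\int_0^\infty = \int_0^1 + \int_1^\infty$. The tail $\int_1^\infty t^{s-1}\Tr\bigl(B(I-\Pi_{\ker P})e^{-tP}\bigr)\,dt$ is entire in $s$, since $P$ restricted to $\ker(P)^\perp$ has a positive spectral gap, so the integrand decays exponentially and we may differentiate in $s$ under the integral sign for every $s\in\C$. All the meromorphic structure thus comes from the head integral $\int_0^1$.

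For the head, I would substitute the short time asymptotic expansion \eqref{eq:log-heat-expansion} truncated at large order $N$, obtaining a finite sum plus a remainder $O(t^{N/m})$. The remainder yields a function holomorphic for $\Re s > -N/m$, so by letting $N\to\infty$ we get meromorphic continuation to all of $\C$. Each explicit summand reduces to the elementary Mellin integral
\[
\int_0^1 t^{s-1+\alpha}\log^l t\,dt \;=\; \frac{(-1)^l\, l!}{(s+\alpha)^{l+1}},
\]
which contributes a pole of order $l+1$ at $s=-\alpha$. Applied to the $\alpha = (j-b-n)/m$, $l\in\{0,\dots,k+1\}$ terms of \eqref{eq:log-heat-expansion}, this gives poles of $\Gamma(s)\zeta(B,P;s)$ of order at most $k+2$ at $s=(n+b-j)/m$, and simple poles from the $\tilde d_j(B,P)$ terms at $s=-j$ for $j\in\Z_+$, exactly as displayed in \eqref{eq:zeta-pole-structure}.

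To pass from $\Gamma(s)\zeta(B,P;s)$ to $\zeta(B,P;s)$ itself I divide by $\Gamma(s)$. Since $1/\Gamma(s)$ is entire with simple zeros precisely on $\Z_-=\{0,-1,-2,\dots\}$, the simple poles at $s=-j$ are killed, explaining why the $\tilde d_j$ contributions do not produce poles of $\zeta(B,P;s)$. The delicate point, and the main bookkeeping obstacle, is the apparent order $k+2$ at $s=(n+b-j)/m$: here I invoke the last clause of Theorem \ref{t:parametric-expansion}(3) (and equivalently of \eqref{eq:log-heat-expansion}), namely $a_{j,k+1}(B,P)=0$ unless $(j-b-n)/m\in\Z_+$. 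When $(j-b-n)/m\in\Z_+$, the corresponding pole location $s=(n+b-j)/m$ lies in $\Z_-$, where $1/\Gamma(s)$ has a simple zero that reduces the pole order by one, from $k+2$ down to $k+1$. When $(j-b-n)/m\notin\Z_+$, the $\log^{k+1}$ coefficient already vanishes, so the pole order is at most $k+1$ to begin with.

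Combining the two cases yields a meromorphic function on all of $\C$ with poles of order at most $k+1$ located in $\{(n+b-j)/m : j\in\Z_+\}$, which is the assertion of the theorem. The only nontrivial input is the order-counting argument above; everything else is the standard Mellin-transform translation of an asymptotic expansion into a meromorphic pole structure, together with the exponential decay of the heat trace on the orthogonal complement of $\ker P$.
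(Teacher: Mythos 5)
Your proposal is correct and takes essentially the same approach as the paper: the paper also obtains the meromorphic continuation by plugging the short-time heat expansion \eqref{eq:log-heat-expansion} into the Mellin representation \eqref{eq:zeta-function}, splitting off the entire tail $\int_1^\infty$, and reading off the pole structure of $\Gamma(s)\zeta(B,P;s)$ as in \eqref{eq:zeta-pole-structure} before dividing by $\Gamma(s)$. Your order-counting step — that $a_{j,k+1}\neq 0$ forces the pole location into $\Z_-$ where $1/\Gamma$ has a simple zero, dropping the order from $k+2$ back to $k+1$ — makes explicit exactly the remark in the paper that $a_{j,k+1}'=0$ unless $(n+b-j)/m$ is a pole of $\Gamma$.
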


\section{Regularized traces}
\label{s:RT}

\subsection{The Residue Trace (Noncommutative Residue)}

We have seen in Proposition \plref{p:no-trace} that the Hilbert
space trace $\Tr$ cannot be extended to all classical pseudodifferential
operators.

However, 
in his seminal papers \cite{Wod:LIS}, \cite{Wod:NRF}
\textsc{M. Wodzicki} was able to show
that, up to a constant,
the algebra $\CL^\bullet(M)$ has a unique trace which he called the
noncommutative residue; we prefer to call it residue trace. 
The residue trace was independently
discovered by \textsc{V. Guillemin} \cite{Gui:NPW} as a byproduct
of his axiomatic approach to the Weyl asymptotics.
In \cite{Les:NRP} the author generalized the residue trace to
the algebra $\CL^{\bullet,\bullet}(M,E)$. Strictly speaking
there is no residue trace on the full algebra $\CL^{\bullet,\bullet}(M,E)$.
Rather one has to restrict to operators with a given bound on
the $\log$ degree. 

In detail: let $A\in\CL^{a,k}(M,E)$ and let $P\in\CL^m(M,E)$ elliptic, non--negative
and invertible, cf. Subsection \ref{ss:heat-zeta}. Put

\begin{equation}\label{eq:def-NCR}
  \begin{split}
      \Res_k(&A,P)\\
      &:= m^{k+1} \Res_{k+1} \Tr(AP^{-s})|_{s=0}\\
    &= m^{k+1}(-1)^{k+1} (k+1)!\times \text{ coefficient of }\;
      \log^{k+1} t\text{ in the }\\
    &\quad \text{asymptotic  expansion of}\;\Tr(Ae^{-tP})\text{ as } t\to 0.\\
  \end{split}
\end{equation}

In \cite{Les:NRP} it was assumed in addition that the leading
symbol of $P$ is scalar. This assumption allows one to use Duhamel's
principle and to systematically
exploit the fact that the order of a commutator $[A,P]$ is at most
$\ord A+\ord P-1$. Using the resolvent approach
it was shown in \textsc{Grubb} \cite{Gru:RAT} that for defining $\Res_k$
and to derive its properties one does not need to assume that
$P$ has scalar leading symbol.

The main properties of $\Res_k$ can now be summarized as follows:

\begin{theorem}[Wodzicki--Guillemin; $log$--polyhomogeneous case \cite{Les:NRP}] 
\indent\par Let $A\in\CL^{a,k}(M,E)$ and let $P\in\CL^m(M,E)$
be elliptic, non--negative and invertible.
  
\textup{1.} $\Res_k(A,P)=:\Res_k(A)$ is independent of $P$, i.e. 
\[\Res_k:\CL^{\bullet,k}(M,E)\longrightarrow \C\] is a
  linear functional.

\textup{2.} If $A\in\CL^{a,k}(M,E), B\in\CL^{b,l}(M,E)$
  then $\Res_k([A,B])=0$. In particular, $\Res:=\Res_0$
  is a trace on $\CL^\bullet(M,E)$.

\textup{3.} For $A\in\CL^{a,k}(M,E)$ the $k$-th residue
  $\Res_k(A)$ vanishes if \[a\not\in -\dim M+\Z_+.\]

\textup{4.} In a local chart one puts 
 \begin{equation}
\go_k(A)(x)
=\frac{(k+1)!}{(2\pi)^n} \Big(\int_{|\xi|=1} \tr_{E_x}(a_{-n,k}(x,\xi)) |d\xi|
\Big) |dx|.
        \label{eq:residue-density}
\end{equation}
Then $\go_k(A)\in\Gamma^\infty(M,|\Omega|)$ is a density (in particular
independent of the choice of coordinates), which depends
  functorially on $A$. Moreover
  \begin{equation}
       \Res_k(A)=\int_M \go_k(A).
  \end{equation}

\textup{5.} If $M$ is connected and $n=\dim M>1$ then
$\Res_k$ induces an isomorphism
$\CL^{a,k}(M)/[\CL^{a,k}(M),\CL^{1,0}(M)]\longrightarrow\C$.
In particular, $\Res$ is up to scalar multiples the only
trace on $\CL^\bullet(M)$.
\end{theorem}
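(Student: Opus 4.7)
The plan is to organize the proof around the zeta-function definition \eqref{eq:def-NCR} and exploit the pole structure from Theorem \ref{t:zeta-meromorphic} together with the resolvent/heat asymptotics of Theorem \ref{t:parametric-expansion}. I would prove the parts in the order \textup{(3)}, \textup{(4)}, \textup{(1)}, \textup{(2)}, \textup{(5)}, because the local formula really drives everything else and uniqueness is the only genuinely hard statement.

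For \textup{(3)}, I simply inspect the pole structure of $\zeta(A,P;s)=\Tr(AP^{-s})$. By Theorem \ref{t:zeta-meromorphic} the poles sit at $s=(n+a-j)/m$, $j\in\Z_+$, with order at most $k+1$, and the top order is attained only when $(n+a-j)/m$ is not a pole of $\Gamma$. In particular a pole of order $k+1$ at $s=0$ forces $a+n-j=0$ for some $j\in\Z_+$, i.e.\ $a\in -n+\Z_+$. For \textup{(4)}, I would compute in a local chart the coefficient of $\log^{k+1}t$ in $\Tr(Ae^{-tP})$ by feeding the product expansion \eqref{eq:product-symbol-a} for the symbol of $A(P-\gl)^{-N}$ into the expansion Lemma \ref{l:expansion-lemma}; the key observation is that the maximal $\log$-power in \eqref{eq:expansion-lemma-proof} comes precisely from the $(-n)$-homogeneous piece of log-degree $k$ in the symbol of $A$, namely $a_{-n,k}(x,\xi)$, and all the factors $m,(k+1)!,(2\pi)^{-n}$ in \eqref{eq:def-NCR} and \eqref{eq:residue-density} can be tracked through the contour integral \eqref{eq:heat-contour-rep} (the factor $m$ arising from the substitution $\mu=\gl^{1/m}$ between the parametric and heat regimes). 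Coordinate invariance of $\go_k(A)$ is then automatic from the intrinsic nature of the left-hand side; alternatively it may be checked directly using the change-of-variables computation of Subsection \ref{ss:coordinate-invariance}. Part \textup{(1)} is then a corollary: the local expression for $\go_k(A)$ involves only $A$.

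For \textup{(2)}, the standard commutator trick works. For $\Re s\gg 0$ both $ABP^{-s}$ and $BAP^{-s}$ are trace class, and the vanishing of the trace on commutators of trace class operators gives
\[
\Tr([A,B]P^{-s})=\Tr(B[P^{-s},A]).
\]
A straightforward resolvent calculation (write $P^{-s}=\frac{1}{2\pi i}\int_\gamma \gl^{-s}(P-\gl)^{-1}d\gl$ and commute $A$ past the resolvent) produces $[P^{-s},A]=s\,R(s)$ with $R(s)$ a meromorphic family in $\CL^{\bullet,\bullet}(M,E)$ of the expected log-polyhomogeneous type. The extra factor of $s$ kills the top-order pole at $s=0$, so the coefficient of $\log^{k+1}t$ vanishes and $\Res_k([A,B])=0$. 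Specialising to $k=0$ shows $\Res$ is a trace on $\CL^\bullet(M,E)$.

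The genuinely hard step is \textup{(5)}. Any linear functional $\tau$ on $\CL^\bullet(M)$ vanishing on commutators descends to the quotient $\CL^\bullet(M)/[\CL^\bullet(M),\CL^{1,0}(M)]$, and the claim is that this quotient is one-dimensional, with class detected by $\int_M\go_0(A)$. Following Wodzicki, the proof works at the symbol level: one must show that a classical symbol $a$ of order $\le -n$ whose $(-n)$-homogeneous component satisfies $\int_{|\xi|=1}a_{-n}(x,\xi)\,d\xi\equiv 0$ is, modulo $\CL^{-\infty}$, a finite sum of commutators with elements of $\CL^{1,0}(M)$. Locally, commutators with $x_j$ and with $\pl_{x_j}$ implement $\pl_{\xi_j}$ and $\pl_{x_j}$ on complete symbols, so the algebraic content is: for $n>1$, a smooth function on $S^{n-1}$ has zero mean if and only if it lies in the image of $\sum_j\pl_{\xi_j}$ on homogeneous functions of degree $1-n$. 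One then strips off homogeneous components by induction on the order, the mean-zero condition being preserved at each step modulo lower-order data. The restriction $n>1$ is essential (for $n=1$, $S^0$ is two points and no differentiation is available, giving rise to extra traces). The main obstacle, in the full log-polyhomogeneous generality, is controlling this inductive descent in the presence of the $\log^\ell$ factors and verifying that the commutant representatives actually remain in $\CL^{1,0}(M)$; a clean approach here is the one used in \cite{Les:NRP,Gru:RAT}, based on Duhamel's principle respectively the resolvent calculus.
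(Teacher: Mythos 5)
The paper does not prove this theorem: it states it as a known result and refers to \cite{Les:NRP} (and to Wodzicki's and Guillemin's original papers for the classical case), so there is no in-paper proof to compare against. That said, your sketch follows the standard route and hits the right ideas---the pole structure of $\zeta(A,P;\cdot)$ from Theorems \ref{t:zeta-meromorphic} and \ref{t:parametric-expansion}, Lemma \ref{l:expansion-lemma} for the local density, the commutator/Duhamel argument for traciality, and Wodzicki's symbol-level analysis for uniqueness---and you are right to flag part \textup{5} as the only genuinely hard step.

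One place to tighten is part \textup{2}. The commutator $[A,B]$ has $\log$-degree $k+l$, so $s\mapsto\Tr([A,B]P^{-s})$ can a priori have a pole of order $k+l+1$ at $s=0$. The single factor of $s$ you pull out of $[P^{-s},A]$ reduces this only to order $k+l$, which shows $\Res_{k+l}([A,B])=0$ rather than $\Res_{k}([A,B])=0$ when $l\ge 1$. Either read the index in the theorem as the $\log$-degree of the commutator (this is the version actually proved in \cite{Les:NRP}), or argue via part \textup{4}: every $\log$-homogeneous component of order $-n$ of the complete symbol of $[A,B]$ is, modulo $x$-derivatives (which integrate to zero over $M$), a finite sum of $\xi$-divergences, hence has vanishing cosphere mean, so $\go_\ell([A,B])\equiv 0$ for every $\ell$. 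The local route also sidesteps the delicate question of the $\log$-degree of $[P^{-s},A]/s$ near $s=0$, which is less obvious than your phrase ``of the expected $\log$-polyhomogeneous type'' suggests---one needs the assumption that $P$ has scalar leading symbol (or Grubb's resolvent refinement) to control it. Similarly in part \textup{1}, deducing $P$-independence from ``the local expression only involves $A$'' presupposes you have already run the computation of part \textup{4} carefully enough to see that the $\log^{k+1}t$-coefficient in the local kernel expansion depends on $a_{-n,k}$ alone; this comes out of the last case of \eqref{eq:expansion-lemma-proof}, but only after verifying that all $P$-dependent contributions from the product expansion \eqref{eq:product-symbol-a} combine to a universal constant.
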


\commentary{\begin{remark}\TODO{no so clear}
  5. is usually stated only for scalar operators. However, the extension
  to operators in a vector bundle is straightforward.
\end{remark}
}

\begin{example}\label{ex:20090525}
1. Let $A$ be a classical pseudodifferential operator of order $-n=-\dim M$
which is assumed to be elliptic, non--negative and invertible. To calculate
the residue trace of $A$ we may use $P:=A\ii$. Thus 
\begin{equation}\label{eq:ML20090525-1}
\Res(A)=n \Res \Tr(A^{1+s})|_{s=0}=n \Res \zeta(A\ii;s)|_{s=1}>0,
\end{equation}
where $\zeta(A\ii;s)=\zeta(I,A\ii;s)$ is the $\zeta$--function of
the elliptic operator $A\ii$. The positivity follows from
Eq. \eqref{eq:residue-density}.

2. Let $\Delta$ be the Laplacian on a closed
  Riemannian manifold $(M,g)$. Then the heat expansion
  \eqref{eq:log-heat-expansion} (with $B=I$ and $P=\Delta$) simplifies: 
  since $\Delta$ is a differential operator there are no $\log$ terms
  and by a parity argument every other heat coefficient vanishes
  \cite{Gil:ITH}. Thus we have an asymptotic expansion
  \begin{equation}\label{eq:heat-expansion-delta}
    \Tr(e^{-t\Delta})\sim_{t\to 0} \sum_{j=0}^\infty a_j(\Delta)
    t^{(j-n)/2},\quad a_{2j+1}(\Delta)=0.
  \end{equation}
  The $a_j(\Delta)$ are enumerated such that \eqref{eq:heat-expansion-delta}
  is consistent with \eqref{eq:log-heat-expansion}. 
  The first few $a_j(\Delta)$ have been calculated\
  although the computational complexity increases drastically with $j$
  (cf. e.g. \cite{Gil:ITH}). One has
  \begin{equation}\label{eq:20081118-3}
    \begin{split}
      a_0(\Delta)&=c_n\vol(M)\\
      a_2(\Delta)&=c_n' \int_M \operatorname{scal}(M,g)d\vol.
    \end{split}
  \end{equation}
  The latter is known as the \emph{Einstein-Hilbert action} in the physics 
  literature. Therefore the following relation between the heat coefficients
  (and in particular the EH action) and the residue trace has received
  some attention from the physics community, e.g. \textsc{Kalau--Walze} \cite{KalWal:GNC}, 
  \textsc{Kastler} \cite{Kas:DOG}.
  We find for real $\ga$
  \begin{align}
    \Res(\Delta^\ga)&= 2 \lim_{s\to 0} s \Tr(\Delta^{\ga-s})\nonumber\\
            &= 2 \lim_{s\to 0} s\zeta(I,\Delta;s-\ga)\nonumber\\
	    &= 2\lim_{s\to 0} \frac{s}{\Gamma(s-\ga)}\int_0^1
	    t^{s-\ga-1}\bigl(\Tr(e^{-t\Delta})-\dim\ker\Delta\bigr)dt\label{eq:20081118-1}\\
	    &=2\sum_{j=0}^\infty \lim_{s\to 0}
	    \frac{a_j(\Delta)s}{\Gamma(s-\ga)(s-\ga+\frac{j-n}{2})}\label{eq:20081118-2}\\
	    &=  \begin{cases} \frac{2 a_j(\Delta)}{\Gamma(\frac{n-j}{2})},&
	                                                                \ga=\frac{j-n}{2}<0,\\
	                               0,&\text{otherwise.}
	                 	    \end{cases}\label{eq:20081118-4}
  \end{align}
  Here we have used that the $\zeta$--function of $\Delta$ has only
  simple poles (cf. Theorem \ref{t:zeta-meromorphic}). Furthermore,
  in \eqref{eq:20081118-1} we use that due to
  the exponential decay of $(\Tr(e^{-t\Delta})-\dim\ker\Delta)$ the function
  $s\mapsto \int_1^\infty t^{s-\ga-1}(\Tr(e^{-t\Delta})-\dim\ker\Delta)dt$ is entire and
  hence does not contribute to the residue at $s=0$. Furthermore,
  note that the sum in \eqref{eq:20081118-2} is finite.

  In view of \eqref{eq:20081118-3} we have the following special cases of
  \eqref{eq:20081118-4}:
 \begin{align}
      \Res(\Delta^{-n/2})&=\frac{2 a_0(\Delta)}{\Gamma(\frac n2)}=c_n\vol(M)\label{eq:20081118-5},\\
      \Res(\Delta^{1-n/2})&=c_n' \operatorname{EH}(M,g),\label{eq:20081118-6}
  \end{align}
  where $\operatorname{EH}$ denotes the above mentioned Einstein-Hilbert
  action. It is formula \eqref{eq:20081118-6} which caused physicists to become
  enthusiastic about this business. Needless to say, the calculation we
  present here goes through for any Dirac Laplacian. One only has to replace
  the scalar curvature in \eqref{eq:20081118-3} by the second local heat
  coefficient, which can be calculated for any Dirac Laplacian.
  
  We wanted to show that the relation
  between the heat asymptotic and the poles of the $\zeta$--function,
  which is an easy consequence of the Mellin transform, leads to a
  straightforward proof of \eqref{eq:20081118-6}. There also exist
  ``hard'' proofs of this fact which check that the \emph{local}
  Einstein-Hilbert action coincides with the residue density of the operator
  $\Delta^{1-n/2}$ \cite{KalWal:GNC},\cite{Kas:DOG}.
\end{example}

\subsection{Connes' Trace Theorem}

The famous trace Theorem of Connes gives a relation between the Dixmier
trace and the Wodzicki--Guille\-min residue trace for pseudodifferential
operators of order minus $\dim M$. It was extended by \textsc{Carey} et. al.
\cite{Caretal:SFD}, \cite{Caretal:DTA} to the von Neumann algebra setting.

\begin{theorem}[Connes' Trace Theorem {\cite{Con:AFN}}]\label{t:connes-trace} 
Let $M$ be a closed manifold of dimension $n$ and let $E$ be a smooth vector bundle
over $M$. Furthermore let $P\in\CL^{-n}(M,E)$ be a pseudodifferential operator
of order $-n$. Then $P\in\cL^{(1,\infty)}(L^2(M,E))$ and for any $\go$ satisfying
the assumptions of the previous Proposition one has
\begin{equation}
     \Tr_\go(P)=\frac 1n \Res P.
\end{equation}
\end{theorem}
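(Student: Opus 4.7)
The plan is to establish $P\in\cL^{(1,\infty)}(L^2(M,E))$ by a simple ideal argument, reduce the desired identity to the case of a positive self--adjoint elliptic operator by linearity, and then apply a Tauberian theorem to the zeta function whose residue at $s=1$ was already identified with $\tfrac{1}{n}\Res P$ in Example \plref{ex:20090525}.

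For the ideal membership, I would fix once and for all a strictly positive elliptic $Q\in\CL^{-n}(M,E)$. Weyl's law applied to the elliptic operator $Q^{-1}$ of order $n$ gives $\mu_j(Q)\sim c\,j^{-1}$, hence $\sum_{j=1}^{N}\mu_j(Q)=O(\log N)$ and $Q\in\cL^{(1,\infty)}$. Writing an arbitrary $P\in\CL^{-n}(M,E)$ as $P=(PQ^{-1})\,Q$ with $PQ^{-1}\in\CL^{0}(M,E)$ bounded on $L^2$, and using that $\cL^{(1,\infty)}$ is a two--sided ideal in $\cB(L^2)$ (because $\mu_j(RT)\le\|R\|\mu_j(T)$), this yields $P\in\cL^{(1,\infty)}$. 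Next, I would decompose $P=P_1+iP_2$ into self--adjoint parts and write $P_j=(P_j+c_jQ)-c_jQ$ with $c_j$ large enough that the leading symbol of $P_j+c_jQ$ is strictly positive; by G\aa rding's inequality $P_j+c_jQ$ is then a positive self--adjoint elliptic element of $\CL^{-n}(M,E)$ (up to a smoothing finite--rank correction, which affects neither $\Tr_\go$ nor $\Res$). By $\C$--linearity of both sides of the asserted equality it then suffices to verify the theorem for a positive self--adjoint elliptic $P\in\CL^{-n}(M,E)$.

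For such a $P$, Theorem \plref{t:zeta-meromorphic} applied to the positive elliptic operator $P^{-1}\in\CL^{n}(M,E)$ shows that $\zeta(P^{-1};s)=\Tr(P^s)$ is meromorphic on $\C$ with only simple poles, and by \eqref{eq:ML20090525-1} its residue at $s=1$ equals $\tfrac{1}{n}\Res(P)$. Let $\mu_1(P)\ge\mu_2(P)\ge\dots>0$ enumerate the eigenvalues of $P$; the Dirichlet series $\sum_{j\ge 1}\mu_j(P)^s$ coincides with $\zeta(P^{-1};s)$ for $\Re s>1$. The Hardy--Littlewood--Karamata Tauberian theorem, whose monotonicity hypothesis holds tautologically for the decreasing sequence $(\mu_j(P))$, then gives
\[
\sum_{j=1}^{N}\mu_j(P)\;\sim\;\tfrac{1}{n}\Res(P)\,\log N,\qquad N\to\infty.
\]
Consequently the sequence $\bigl(\tfrac{1}{\log(N+1)}\sum_{j=1}^{N}\mu_j(P)\bigr)_{N\ge 1}$ converges, and by property (2) of $\go$ in Proposition \plref{p:Dixmier-Connes} the Dixmier limit equals the ordinary limit, yielding $\Tr_\go(P)=\tfrac{1}{n}\Res P$ as claimed.

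The main obstacle is the Tauberian step: the mere pole structure of the Dirichlet series does not immediately imply the leading Ces\`aro asymptotic of the partial sums $\sum_{j=1}^{N}\mu_j(P)$, and one must carefully invoke a Karamata--type (or Ikehara--type) theorem with the monotonicity of $(\mu_j(P))$ as the essential hypothesis. A minor subsidiary point is the G\aa rding--based positivization, which needs $Q$ to have strictly positive (and hence invertible) leading symbol so that the resulting operators remain genuinely pseudodifferential; this is routine.
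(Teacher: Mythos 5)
Your proof follows essentially the same strategy as the paper's: establish the ideal membership, reduce by linearity to a positive elliptic $P\in\CL^{-n}(M,E)$ using a positive elliptic auxiliary operator $Q$, and then connect the residue of $\zeta(P^{-1};s)=\Tr(P^s)$ at $s=1$ to the Ces\`aro behavior of the eigenvalues by a Tauberian argument. The reduction steps and the ideal argument via $P=(PQ^{-1})Q$ are fine (the paper's positivization uses the boundedness of $Q^{-1/2}PQ^{-1/2}$ rather than a G\aa rding inequality, which is slightly cleaner for operators of negative order, but both work).

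The one place where your sketch is materially thinner than the paper is the Tauberian step. There is no single off-the-shelf ``Hardy--Littlewood--Karamata'' theorem that converts the simple pole of the Dirichlet series $\sum_j\mu_j^s$ at $s=1$ directly into the asymptotic $\sum_{j\le N}\mu_j\sim L\log N$. The standard Ikehara/Karamata statements apply to a \emph{non--decreasing} function, namely the counting function $F(\lambda)=\#\{j:\mu_j^{-1}\le\lambda\}$, and yield $F(\lambda)/\lambda\to L$. One must then use the monotonicity of $(\mu_j)$ to pass from $F(\lambda)\sim L\lambda$ to $j\mu_j\to L$ (the paper's Claim \eqref{eq:claim}), and only then is $\sum_{j\le N}\mu_j\sim L\log N$ immediate. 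The paper actually organizes this in a slightly more roundabout way --- it applies Ikehara a second time to $\beta(u)=\sum_{\mu_j\ge e^{-u}}\mu_j$ and then uses only two--sided bounds $c_1\le j\mu_j\le c_2$ to translate between the cutoff in $\mu_j$ and the cutoff in the index $j$. Your streamlined version (use $j\mu_j\to L$ and sum) is valid and arguably more direct, but as written it glosses over exactly the intermediate eigenvalue asymptotic, which is the real content; stating Ikehara for $F$ and then deriving $j\mu_j\to L$ from monotonicity would make the sketch complete.
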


We give a sketch of the proof of Connes' Theorem using a Tauberian argument. This was mentioned
without proof in \cite[Prop. 4.2.$\beta$.4]{Con:NG} and has been elaborated in various ways
by many authors. The argument we present here is an adaption of an argument in
\cite{Caretal:SFD} to the type I case.

Let us mention the following simple version of Ikehara's Tauberian Theorem:

\begin{theorem}[{\cite[Sec. II.14]{Shu:POST}}] \label{t:Ikehara} Let $F:[1,\infty)\to \R$ be an increasing function such that
  \begin{enumerate}
    \item[\textup{(1)}] $\zeta_F(s)=\int_1^\infty \gl^{-s} dF(\gl)$ is analytic for $\Re s>1$,
    \item[\textup{(2)}] $\lim\limits_{s\to 1+} (s-1)\zeta_F(s)=L$. 
  \end{enumerate}
Then 
\begin{equation}
   \lim\limits_{\gl\to\infty} \frac{F(\gl)}{\gl}=L.
  \label{eq:20081114-4}
\end{equation}
\end{theorem}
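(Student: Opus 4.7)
The plan is to reduce the statement, via the substitution $\lambda = e^t$, to a Karamata-type Tauberian theorem for the Laplace transform of a non-negative density, and then to upgrade the resulting Cesàro-type conclusion to a pointwise limit using the monotonicity of $F$.

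First I would set $G(t) := F(e^t)$, which is non-decreasing on $[0,\infty)$, and $H(t) := e^{-t}G(t) \geq 0$. Integration by parts (the boundary term at infinity vanishing because of analyticity in hypothesis (1)) yields $\zeta_F(s) = s\int_0^\infty e^{-st}G(t)\,dt + \text{const}$; putting $\sigma := s-1$, hypothesis (2) translates into
\begin{equation*}
  \int_0^\infty e^{-\sigma t}H(t)\,dt \;=\; \frac{\zeta_F(\sigma+1)}{\sigma+1} + O(1) \;\sim\; \frac{L}{\sigma},\quad \sigma \to 0+.
\end{equation*}
Since $H \geq 0$, Karamata's Tauberian theorem for Laplace transforms of non-negative measures then gives $\int_0^t H(u)\,du \sim Lt$ as $t \to \infty$.

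The main obstacle is that $H$ itself is not monotone, so this Cesàro-type statement does not directly yield the desired pointwise limit $H(t) \to L$. The remedy is a sandwich argument exploiting the monotonicity of $G$ rather than of $H$: for any $\delta > 0$, the inequalities $G(u) \geq G(t)$ on $u \in [t,t+\delta]$ and $G(u) \leq G(t)$ on $u \in [t-\delta, t]$, after multiplying by $e^{-u}$ and integrating, give
\begin{equation*}
  \frac{1}{e^\delta - 1}\int_{t-\delta}^t H(u)\,du \;\leq\; H(t) \;\leq\; \frac{1}{1 - e^{-\delta}}\int_t^{t+\delta} H(u)\,du.
\end{equation*}
Combined with the Karamata asymptotics $\int_0^t H\,du \sim Lt$, these squeeze
\begin{equation*}
  \frac{L\delta}{e^\delta - 1} \;\leq\; \liminf_{t\to\infty} H(t) \;\leq\; \limsup_{t\to\infty} H(t) \;\leq\; \frac{L\delta}{1 - e^{-\delta}},
\end{equation*}
and letting $\delta \to 0$ gives $H(t) \to L$, i.e., $F(\lambda)/\lambda \to L$. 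The case $L = 0$ is handled by the same sandwich applied to the little-$o$ form $\int_0^t H\,du = o(t)$.
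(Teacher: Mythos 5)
Your reduction via $\lambda = e^t$, $G(t) = F(e^t)$, $H(t) = e^{-t}G(t)$ is sound, and Karamata's theorem does yield the Ces\`aro asymptotics $\int_0^t H(u)\,du \sim Lt$. The gap is the final sandwich. Karamata gives only $\int_0^t H\,du = Lt + o(t)$, so for fixed $\delta$ one has $\int_t^{t+\delta}H\,du = L\delta + o(t)$ as $t\to\infty$ --- the error is $o(t)$, not $o(1)$, and is in general unbounded. Your inequality $H(t)\le (1-e^{-\delta})^{-1}\int_t^{t+\delta}H\,du$ therefore gives no bound on $\limsup_{t\to\infty} H(t)$; the sandwich would close only from $\int_0^t H\,du = Lt + o(1)$, which is far stronger than anything Karamata can deliver for a non-negative but non-monotone density.

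This gap cannot be repaired inside your scheme, because the theorem, taken literally with convergence of $(s-1)\zeta_F(s)$ only as $s\to 1+$ along $\R$, is false. Fix $\omega>0$ and $0<\varepsilon\le(1+\omega^2)^{-1/2}$, and set $F(\lambda)=\lambda\bigl(1+\varepsilon\sin(\omega\log\lambda)\bigr)$ for $\lambda\ge 1$. Then $F$ is increasing,
\begin{equation*}
\zeta_F(s)=\frac{1}{s-1}+\frac{\varepsilon\omega}{(s-1)^2+\omega^2}+\frac{\varepsilon\omega(s-1)}{(s-1)^2+\omega^2}
\end{equation*}
is holomorphic on $\Re s>1$, and $(s-1)\zeta_F(s)\to 1$ as $s\to1+$, yet $F(\lambda)/\lambda=1+\varepsilon\sin(\omega\log\lambda)$ has no limit. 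The obstruction is the pair of poles of $\zeta_F$ at $s=1\pm i\omega$: the genuine Wiener--Ikehara theorem requires $\zeta_F(s)-L/(s-1)$ to extend \emph{continuously to the whole closed line} $\Re s\ge 1$, not merely to behave well at $s=1$ along the reals, and its proof is Fourier-analytic (Riemann--Lebesgue together with a Fej\'er-type approximate identity), not a local averaging argument. The paper states a condensed version and cites Shubin rather than giving a proof; in the application that follows, the stronger boundary hypothesis is indeed available, since $\zeta_F$ is there meromorphic in a neighborhood of $\Re s=1$ with a single simple pole at $s=1$.
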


\begin{cor}\label{t:20081114-11}
  Let $F:[1,\infty)\to \R$ be an increasing function such that
  $\int_1^\infty e^{-t\gl} dF(\gl)=\frac{L}{t}+O(t^{\eps-1}), t\to 0+,$
  for some $\eps>0$. Then Ikehara's Theorem applies to $F$ and \eqref{eq:20081114-4}
  holds.
\end{cor}
\begin{proof} The $\zeta$--function of $F$ satisfies
  \begin{equation}
    \begin{split}
      \zeta_F(s)&=  \int_1^\infty \gl^{-s}dF(\gl)\\
              &=  \int_1^\infty \frac{1}{\Gamma(s)} \int_0^\infty t^{s-1}e^{-t\gl}dt\; dF(\gl)\\
	      &= \int_0^1 \frac{t^{s-1}}{\Gamma(s)} \int_1^\infty e^{-t\gl}dF(\gl)\; dt+\text{ holomorphic near } s=1\\
              &\sim \frac{1}{\Gamma(s)}\frac{L}{s-1}\text{ near } s=1.\qedhere
    \end{split}
    \label{eq:20081114-5}
  \end{equation}
\end{proof}
\begin{proof}[Proof of Connes' Trace Theorem]
  Each $P\in\CL^{-n}(M,E)$ is a linear combination of at most $4$ non--negative operators:
  to see this we first write $P=\frac 12(P+P^*)+\frac{1}{2i}(P-P^*)$ as a linear
  combination of two self--adjoint operators. So consider a self--adjoint $P=P^*$. We
  choose an elliptic operator $Q\in \CL^{-n}(M,E)$ with $Q>0$ and positive definite leading symbol.
  Since we are on a compact manifold it then follows that $c\cdot Q-P\ge 0$ for $c$ large enough.
  Hence $P=c\cdot Q-(c\cdot Q-P)$ is the desired decomposition of $P$ as a difference of non--negative
  operators.

  So it suffices to prove the claim for a non--negative operator $P$. Then $P+\eps Q$ is
  elliptic and invertible for each $\eps>0$. By an approximation argument we are ultimately
  left with the problem of proving the claim for an 
\emph{elliptic} positive operator $P\in\CL^{-n}(M,E)$.

  Let $\mu_1\ge \mu_2\ge \mu_3\ge \dots>0$ be the eigenvalues of $P$ counted with multiplicity.
  We consider the counting function
  \begin{equation}
    F(\gl)=\#\bigsetdef{j\in\N}{\mu_j^{-1}\le\gl}.
    \label{eq:20081114-6}
  \end{equation}
  The associated $\zeta$--function
  \begin{equation}
    \zeta_F(s)=\int_1^\infty \gl^{-s}dF(\gl)=\Tr(P^{s})-\sum_{\mu_j>1}\mu_j^{s}
    \label{eq:20081114-7}
  \end{equation}
  is, up to the entire function $\sum\limits_{\mu_j>1}\mu_j^{s}$, the $\zeta$--function of the elliptic operator $P^{-1}$. Thus by 
  Theorem \ref{t:zeta-meromorphic} the function 
  $\zeta_F$ is holomorphic for $\Re s>1$ and it has a meromorphic extension to the complex plane,
and $1$ is a simple pole with 
  \begin{equation}
    \lim_{s\to 1} (s-1)\zeta_F(s)=\frac 1n \Res(P)\not=0,
    \label{eq:20081114-8}
  \end{equation}
cf. Example \plref{ex:20090525} 1.
Thus Ikehara's Theorem \plref{t:Ikehara} applies to $F$ and hence
\begin{equation}\label{eq:20090511-5}
       \lim_{\gl\to\infty} \frac{F(\gl)}{\gl}=\frac 1n \Res(P).
\end{equation} 

\emph{Claim:} 
\begin{equation}\label{eq:claim}
\lim\limits_{j\to\infty} j \mu_j=\frac 1n \Res(P)=:L.
\end{equation}
To see this let $\eps>0$ be given. Then there exists a $\gl_0$ such
that for $\gl\ge\gl_0$
\begin{equation}
     1-\eps \le \frac{F(\gl)}{\gl L} \le 1+\eps.
\end{equation}
Thus
\begin{equation}
\exists_{\gl_0}\forall_{\gl\ge\gl_0} \quad (1-\eps) \gl L\le 
\#\bigsetdef{j\in\N}{\mu_j^{-1}\le\gl}\le (1+\eps)\gl L.
\end{equation}
Hence for $j\ge (1+\eps)\gl L$ we have $\mu_j^{-1}\ge\gl$
and for $j\le (1-\eps)\gl L$ we have $\mu_j^{-1}\le \gl$.
For a given fixed $j_0$ large enough we therefore infer
\begin{equation}\label{eq:20090512-6}
    (1-\eps)L\le j \mu_j\le (1+\eps) L,\quad j\ge j_0,
\end{equation}
proving the Claim. 

Now consider
  \begin{equation}
    \begin{split}
      \gb(u)=\int_1^{e^u} \gl^{-1}dF(\gl) =\sum_{\mu_j\ge e^{-u}} \mu_j.
    \end{split}
    \label{eq:20081114-9}
  \end{equation}
  We check that Ikehara's Tauberian Theorem applies to $\gb$:
  \begin{equation}
    \begin{split}
      \int_1^\infty &e^{-s\gl}d\gb(\gl)=\int_1^\infty e^{-(s+1)\gl}dF(e^\gl)\\
      &=\int_e^\infty x^{-s-1}dF(x)=\zeta_F(1+s)\\
      &=\frac{\Res(P)}{n s} +O(1),\quad s\to 0.
\end{split}
    \label{eq:20081114-10}
  \end{equation}
  Thus Corollary \plref{t:20081114-11} implies 
  \begin{equation}
    \frac 1u \sum_{\mu_j\ge e^{-u}} \mu_j=\frac{\gb(u)}{u}\xrightarrow{u\to\infty} \frac 1n \Res(P). 
    \label{eq:20081114-12}
  \end{equation}

To infer Connes' Trace Theorem from \eqref{eq:20081114-12} we choose $j_0$
such that \eqref{eq:20090512-6} holds for $\eps=1/2$ and $j\ge j_0$.
Then put for $N$ large enough $u_N:=\log\frac{N}{(1-\eps)L}$. Hence
we have $\mu_j\ge \mu_N\ge e^{-u_N}$ for $1\le j\le N$ and thus
\begin{equation}
\begin{split}
   \frac{1}{\log(N+1)}\sum_{j=1}^N\mu_j &\le
\frac{1}{\log(N+1)}\sum_{\mu_j\ge \exp(-u_N)} \mu_j\\
&=\frac{u_N}{\log{N+1}} \frac{1}{u_N} \sum_{\mu_j\ge \exp(-u_N)} \mu_j\\
&\longrightarrow L,\quad\text{for }N\to \infty,
\end{split}
\end{equation}
by \eqref{eq:20081114-12} and since $u_N/\log(N+1)\to 1$.
This proves
\begin{equation}
   \limsup_{N\to\infty} \frac{1}{\log(N+1)}\sum_{j=1}^N \mu_j\le L=\frac
1n\Res(P).
\end{equation}
Arguing with $u_N=\log\frac{N}{(1+\eps)L}$ instead of $u_N=\log\frac{N}{(1-\eps)L}$
one shows 
\begin{equation}
   \liminf_{N\to\infty} \frac{1}{\log(N+1)}\sum_{j=1}^N \mu_j\ge L=\frac
1n\Res(P),
\end{equation}
and Connes' Trace Theorem is proved.
\end{proof}

The attentive reader might have noticed that we did not use the full
strength of the Claim \eqref{eq:claim}. We only used that there
exist positive constants $c_1,c_2$ such that
$c_1\le j\mu_j\le c_2$ for $j\ge j_0$.


\subsection{Parametric case: The symbol valued trace}

In contrast to Proposition \plref{p:no-trace} the situation is entirely
different for the algebra of parametric pseudodifferential
operators. 

Fix a compact smooth manifold $M$ without boundary of dimension $n$.  
Denote the coordinates in $\R^p$ by $\mu_1,\ldots,\mu_p$ and
let $\polyn$ be the algebra of polynomials
in $\mu_1,\ldots,\mu_p$. By a slight abuse of notation we denote
by $\mu_j$ also the operator of multiplication by the
$j$-th coordinate function. Then we have maps
\begin{equation}\label{eq:20081120-5}
\begin{split}
   &\partial_j:\CL^m(M,E;\R^p)\rightarrow \CL^{m-1}(M,E;\R^p),\\
   &\mu_j:\CL^m(M,E;\R^p)\rightarrow  \CL^{m+1}(M,E;\R^p).
\end{split}
\end{equation}
Also $\partial_j$ and $\mu_j$ act naturally on the parametric
symbols over the one--point space 
$\CS^{\bullet,\bullet} (\R^p):=\CS^{\bullet,\bullet}(\{\textup{pt}\};\R^p)$
and on polynomials $\polyn$. Thus they act on the quotient
$\CS^{\bullet,\bullet} (\R^p)/\polyn$.
After these preparations we can summarize one of the main results
of \cite{LesPfl:TAP}. 

Let $E$ be a smooth vector bundle on $M$ and consider $A\in\CL^m(M,E;\R^p)$
with $m+n< 0$. Then for $\mu\in\R^p$
the operator $A(\mu)$ is trace class; hence we may define the function
$\TR(A):\mu\mapsto \Tr(A(\mu))$. The map $\TR$ is obviously tracial, 
i.e.~$\TR(AB)=\TR(BA)$, and commutes with $\partial_j$ and $\mu_j$. 
In fact, the following theorem holds.

\begin{theorem}\textup{\cite[Theorems 2.2, 4.6 and Lemma 5.1]{LesPfl:TAP}}
  \label{t:Lesch-Pflaum}
There is a unique linear extension
\[\TR:\CL^\bullet (M,E;\R^p)\rightarrow \CS^{\bullet,\bullet}(\R^p)/\polyn\]
of $\TR$ to operators of all orders such that
\begin{enumerate}
\item \label{thm21.1}$\TR(AB)=\TR(BA)$, i.e. $\TR$ is tracial.
\item \label{thm21.2}$\TR(\partial_j A)=\partial_j \TR(A)$ for $j=1,\dots,p$.
\end{enumerate}
This unique extension $\TR$ satisfies furthermore:
\begin{enumerate}
\setcounter{enumi}{2}
\item \label{thm21.3}$\TR(\mu_j A)=\mu_j\TR(A)$ for $j=1,\dots,p$.
\item \label{thm21.4}$\TR(\CL^m(M,E;\R^p))\subset \CS^{m+p,1}(\R^p)/\polyn$.
\end{enumerate}
\end{theorem}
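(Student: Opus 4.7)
The statement has two halves: uniqueness modulo $\polyn$ of any extension satisfying \ref{thm21.1}--\ref{thm21.2}, and existence of such an extension (which will then automatically satisfy \ref{thm21.3}--\ref{thm21.4}). The uniqueness is almost free from condition \ref{thm21.2}, so I treat it first.

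\textbf{Uniqueness.} Let $\TR_1,\TR_2$ both be extensions and set $\delta := \TR_1 - \TR_2$. Then $\delta$ vanishes on $\CL^m$ with $m+n<0$ and commutes with every $\partial_j$. Given $A\in\CL^m(M,E;\R^p)$, choose a representative of $\delta(A)$ in $\CS^{s,t}(\R^p)$ for some $s,t$, and pick $N$ with $N > \max(m+n,\, s)$. For every multi-index $\alpha$ with $|\alpha|=N$ the operator $\partial^\alpha A\in\CL^{m-N}$ is trace class pointwise in $\mu$, so
\[
\partial^\alpha \delta(A) \;=\; \delta(\partial^\alpha A) \;=\; 0 \text{ in } \CS^{\bullet,\bullet}/\polyn.
\]
A representative of $\partial^\alpha\delta(A)$ lies in $\CS^{s-N,t}$ with $s-N<0$, and the only element of $\polyn$ of negative symbol order is $0$. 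Hence $\partial^\alpha$ of a representative of $\delta(A)$ vanishes for all $|\alpha|=N$, forcing the representative to be a polynomial of degree $<N$. Thus $\delta(A)=0$ in the quotient.

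\textbf{Existence.} For $A=\Op(a)\in\CL^m(U,E;\R^p)$ compactly supported in a chart $U$, set
\begin{equation*}
\TR(A)(\mu) := \int_U \regint_{\R^n} \tr_{E_x}\bigl(a(x,\xi,\mu)\bigr)\dbar\xi\, dx,
\end{equation*}
with $\regint$ the Hadamard cut-off of Subsection \plref{ss:partie-finie}, and globalize by the partition-of-unity recipe in \eqref{eq:20090514}. The checks to be carried out are: (a) that the result lies in $\CS^{m+p,1}(\R^p)/\polyn$, which follows by applying Lemma \plref{l:expansion-lemma} in the $\mu$-variable termwise to the expansion $a\sim\sum a_{m-j}$, with the $\log$-degree jumping from $0$ to $1$ exactly at the critical joint homogeneity (the $\log^{k+1}$-term in \eqref{eq:expansion-lemma}); (b) coordinate invariance modulo $\polyn$, via \eqref{eq:symbol-coordinate-change} and Proposition \plref{p:change-variables}; (c) agreement with the Hilbert space trace when $m+n<0$, immediate since $\regint$ collapses to the ordinary integral; (d) the commutation properties \ref{thm21.2} and \ref{thm21.3}, both of which pass trivially through $\int_U$ and $\regint_{\R^n}$; (e) the trace property \ref{thm21.1}, which I reduce to the trace-class case by applying sufficiently many $\partial^\alpha$'s to $[A,B]$ and invoking the uniqueness step above to conclude $\TR([A,B])=0$ in the quotient.

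\textbf{Main obstacle.} The decisive step is the coordinate-invariance check (b). Under a chart change, Proposition \plref{p:change-variables} introduces an anomaly proportional to $\int_{S^{n-1}}a_{-n,l}(x,\xi,\mu)\log^{l+1}|A^{-1}\xi|\,d\xi$. I expect this to be polynomial in $\mu$, because by joint $(\xi,\mu)$-homogeneity of degree $m$ of $a$, the $(-n)$-homogeneous $\xi$-component $a_{-n,l}$ is a polynomial in $\mu$ of degree $m+n$; the sphere integration against the bounded $\log^{l+1}|A^{-1}\xi|$ preserves this polynomial character. Showing this cleanly, and combining it with the nonlinear-to-linear reduction of $\phi$ in \eqref{eq:symbol-coordinate-change}, is where the real technical work sits. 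Once (b) is established, all remaining items are routine.
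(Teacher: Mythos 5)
Your uniqueness argument is essentially the paper's: apply $\partial^\alpha$ with $|\alpha|$ large to push both extensions into the trace--class regime, conclude that the discrepancy has all high--order derivatives equal to $0$, hence is a polynomial, hence vanishes in $\CS^{\bullet,\bullet}(\R^p)/\polyn$. That part matches.

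Your existence construction, however, takes a genuinely different route. The paper defines $\TR(A)$ operator--theoretically: differentiate $A$ by $\partial^\alpha$ until $\partial^\alpha A\in\CL^{m-|\alpha|}(M,E;\R^p)$ is trace class, take the genuine trace $\Tr(\partial^\alpha A(\mu))$, then anti--differentiate $|\alpha|$ times; modding out by $\polyn$ makes the construction independent of $\alpha$ and of the chosen anti--derivatives, and anti--differentiation of classical symbols of order $-1$ is precisely what produces the single $\log$--power in item~\ref{thm21.4}. (The commented--out alternative in the source, a Taylor remainder $A(\mu)-\sum_{|\alpha|<N}\partial^\alpha A(0)\mu^\alpha/\alpha!$, is a variant of the same idea.) This approach is manifestly coordinate--invariant, since the only analytic input is the Hilbert space trace of a trace--class family, so the entire coordinate--invariance problem simply does not arise. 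You instead define $\TR$ via a Kontsevich--Vishik--style Hadamard partie finie in the $\xi$--variable on the complete symbol, chart by chart, and then must \emph{prove} coordinate invariance modulo $\polyn$ via Proposition~\plref{p:change-variables}. You correctly flag this as the main obstacle, and your reason why the anomaly is $\mu$--polynomial is sound: the $\xi$--degree~$(-n)$ component of $a(x,\cdot,\mu)$ for fixed $\mu$ is obtained by Taylor--expanding the jointly homogeneous $a_{m-j}(x,\omega,\mu/|\xi|)$ in $\mu/|\xi|$ around $0$, and only the finitely many terms with $j+|\beta|=m+n$ land in $\xi$--degree $-n$, each a monomial $\mu^\beta$ with $|\beta|\le m+n$; so the anomaly is a polynomial in $\mu$ of degree $\le m+n$ (and vanishes identically when $m\notin\Z$). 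Do note that the matrix $A=D\kappa^{-1}(\tilde x)^t$ in Proposition~\plref{p:change-variables} depends on the point, so the anomaly has $x$--dependent polynomial coefficients; integrating over the chart still gives a polynomial in $\mu$. For items~\ref{thm21.1} and~\ref{thm21.4} you in any case fall back on the same differentiation device as the paper, so only the definition of the candidate $\TR$ differs.

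Two small corrections. First, in~(e) you say you ``invoke the uniqueness step'' to get $\TR([A,B])=0$; what you actually need is the same \emph{computation}, not the uniqueness theorem itself (you have only one candidate $\TR$ at that point): $\partial^\alpha\TR([A,B])$ equals the honest Hilbert--space trace of $\partial^\alpha[A,B]$, which is a sum of commutators of $\Psi$DOs of total order $<-n$ and hence vanishes, so $\TR([A,B])\in\polyn$. Second, your item~(a) invokes Lemma~\plref{l:expansion-lemma} ``in the $\mu$--variable termwise.'' The Lemma as stated treats $\int_{\R^n}B(\xi)Q(\xi,\lambda)\,d\xi$ with the integral \emph{convergent} ($b+q+n<0$); what you need is the asymptotic behaviour in $\mu$ of the \emph{cut--off} integral $\regint_{\R^n}a_{m-j}(x,\xi,\mu)\,d\xi$ when $m-j+n\ge 0$. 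The mechanism is the same (joint homogeneity scales $R\mapsto R/|\mu|$ and a $\log R$ term in the $\xi$--asymptotics turns into $-\log|\mu|$ in the constant term), but it is not literally the statement of the Lemma, and this scaling argument is exactly where the single $\log$--power of item~\ref{thm21.4} is produced, so it deserves to be spelled out rather than cited.

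What each approach buys: the paper's construction is shorter and structurally cleaner precisely because it never touches symbols, but it gives no local formula. Your construction gives an explicit local formula for $\TR(A)$ (the regularized integral of the symbol over $T^*U$), which is genuinely more informative — it parallels the non--parametric Kontsevich--Vishik trace of Theorem~\plref{t:kont-vish} — at the cost of the coordinate--invariance and $\log$--degree verifications you identified.
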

This Theorem is an example where functions with $\log$--poly\-homo\-ge\-neous
expansions occur naturally. Note that although an operator
$A\in\CL^m(M,E;\R^p)$ has a homogeneous symbol expansion without
$\log$ terms the trace function $\TR(A)$ is $\log$--polyhomogeneous.

\begin{proof}[Sketch of Proof]
\commentary{We briefly present two arguments which help explain why this
theorem is true.

\subsubsection*{1.~Taylor expansion} 
 Given $A \in \CL^m (M,E;\R^p)$. Since differentiation by the parameter
reduces the order of the operator, the Taylor expansion around $0$
yields for $\mu\in\R^p$ (cf.~\cite[Prop.~4.9]{LesPfl:TAP})
 \begin{equation} \label{LesPfl:G1-3.7}
   A(\mu) - \sum_{|\alpha| \leq N-1} \, 
   \frac{(\partial_\mu^\alpha A)(0)}{\alpha !} \, 
   \mu^\alpha \in \CL^{m-N} (M,E).
 \end{equation}
Hence, if $N$ is so large that $m-N+n < 0$, then the difference
\eqref{LesPfl:G1-3.7} is trace class and we put
 \begin{equation}
 \label{LesPfl:G1-3.8}\begin{split}
   \TR(A)(\mu) 
    := \tr &\Big( A(\mu) - \sum_{|\alpha| \leq N-1}
   \frac{(\partial_\mu^\alpha A) (0)}{\alpha !} \, \mu^\alpha
   \Big) \; \mod {\polyn}.
 \end{split}
\end{equation}
Since we mod out by polynomials, the result is in fact independent
of $N$. This defines $\TR$ for operators of all order and the properties
(1)--(3) are straightforward to verify.
However,  \eqref{LesPfl:G1-3.7} does not give any asymptotic 
information and hence does not justify the fact that $\TR$ takes
values in $\PS$.
}
The main observation for the proof is that differentiating by the 
parameter \eqref{eq:20081120-5}
lowers the degree and hence differentiating often enough we obtain 
a parametric family of trace class operators:

Given $A\in \CL^m (M,E;\R^p)$, then 
$\partial^\alpha A\in \CL^{m-|\alpha|}(M,E,\R^p)$ is of
trace class if $m-|\alpha|+ \dim M <0$. Now integrate the function 
$\TR(\partial^\alpha A)(\mu)$
back. Since we mod out polynomials this procedure is independent
of $\alpha$ and the choice of anti--derivatives. This integration procedure
also explains the possible occurrence of $\log$ terms in the
asymptotic expansion and hence why $\TR$ ultimately takes values in
$\CS^{\bullet,\bullet}(\R^p)$. 
For details, see \cite[Sec.~4]{LesPfl:TAP}. 
\end{proof}

$\TR$ is not a trace in the usual sense since it maps into
a quotient space of the space of parametric symbols over a point. 
However, composing any linear functional on $\CS^{\bullet,\bullet}(\R^p)/\polyn$
with $\TR$ yields a trace on $\CL^\bullet (M,E;\R^p)$. 
A very natural choice for such a trace is the Hadamard partie finie
integral $\reginttext$ introduced in Subsection \plref{ss:partie-finie}.
Let us first note that for a polynomial $P(\mu)\in\C[\mu_1,\dots,\mu_p]$
of degree $r$ the function
\begin{equation}
\int_{|\mu|\le R} P(\mu)d\mu=\sum_{j=p}^{p+r} a_j R^j
\end{equation}
is a polynomial of degree $p+r$ without constant term. In particular
\begin{equation}
\regint_{\R^p} P(\mu)d\mu=0
\end{equation}
and hence $\regint_{\R^p}$ induces a linear functional on 
the quotient space\\  $\CS^{\bullet,\bullet}(\R^p)/\polyn$.

Thus putting for $A\in\CL^\bullet(M,E;\R^p)$
\begin{equation}\label{eq:parametric-trace}
    \overline{\TR}(A):=\regint_{\R^p} \TR(A)(\mu)d\mu
\end{equation}
we obtain a trace $\overline{\TR}$ on $\CL^\bullet(M,E;\R^p)$
which extends the natural trace on operators of order
$<-\dim M -p$
\begin{equation}\label{eq:def-fTR}
\bigl(\int\Tr\bigr)(A):=\int_{\R^p}\Tr(A(\mu))d\mu.
\end{equation}

However, since $\reginttext$ is not closed
on $\CS^{\bullet,\bullet}(\R^p)$ (Prop. \plref{S2-4.4}),
$\overline{\TR}$ is not closed on $\CL^\bullet(M,E;\R)$.
Therefore we obtain derived traces
\begin{equation}
  \partial_j\overline{\TR}(A):=\widetilde\TR_j(A)
  :=\regint_{\R^p}\TR(\partial_j A)(\mu)d\mu.
\end{equation}


The relation between $\fTR$ and $\widetilde{\TR}_j$ can be explained
more elegantly in terms of differential forms on $\R^p$ with coefficients
in $\CL^\infty (M,E;\R^p)$ (see \textsc{Lesch, Moscovici} and
\textsc{Pflaum} \cite{LesMosPfl:RPC}). Let 
$\Lambda^\bullet:= \Lambda^\bullet (\R^p)^*=\C[d\mu_1,\ldots,d\mu_p]$ 
be the exterior algebra of the vector space $(\R^p)^*$ and put
\begin{equation}\label{ML-G2.8}
    \Omega_p:=\CL^\infty (M,E;\R^p)\otimes \Lambda^\bullet .
\end{equation}
Then, $\Omega_p$ consists of pseudodifferential operator-valued
differential forms, the coefficients of $d\mu_I$ being
elements of $\CL^\infty (M,E;\R^p)$.

For a $p$-form $A(\mu)d\mu_1\wedge\ldots\wedge d\mu_p$
we define the \emph{regularized trace} by
\begin{equation}\label{ML-G2.9}
     \fTR(A(\mu)d\mu_1\wedge\ldots\wedge d\mu_p)
   := \regint_{\R^p} \TR(A)(\mu)d\mu_1\wedge\ldots\wedge d\mu_p.
\end{equation}
On forms of degree less than $p$ the regularized trace
is defined to be $0$. $\fTR$ is a \emph{graded trace} on the
differential algebra $(\Omega_p,\, d)$. In general, $\fTR$ is
not closed. However, its boundary, 
$$
  \lTR:= d\fTR := \fTR\circ d \, ,
$$ 
called the \emph{formal trace},
is a closed graded trace of degree $p-1$. It is
shown in \cite[Prop.~5.8]{LesPfl:TAP}, \cite[Prop.~6]{Mel:EIF}
that $\lTR$ is \emph{symbolic}, i.e.~it descends to a well-defined
closed graded trace of degree $p-1$ on
\begin{equation}\label{ML-G2.10}
  \partial \Omega_p:= \CL^\infty (M,E;\R^p)/\CL^{-\infty}(M,E;\R^p)
  \otimes\Lambda^\bullet.
\end{equation}

The properties of the formal trace $\lTR$ resemble those of the
residue trace.

Denoting by $r$ the quotient map $\Omega_p\to\partial \Omega_p$
we see that Stokes' formula with `boundary' 
\begin{equation}\label{ML-G2.11}
   \fTR(d\omega)=\lTR(r\omega)
 \end{equation}
now holds by construction for any $\omega\in\Omega$.

Finally we mention an interesting linear form on 
$\CS^{\bullet,\bullet}(\R^p)/\polyn$ 
in the spirit of the residue trace. Let
\begin{equation}
\Omega^r\CS^{\bullet,\bullet}(\R^p)=\CS^{\bullet,\bullet}(\R^p)\otimes\Lambda^\bullet
\end{equation}
be the $r$--forms on $\R^p$ with coefficients in
$\CS^{\bullet,\bullet}(\R^p)$. We extend the notion of homogeneous
functions to differential forms in the obvious way. 
If $\go=f d\mu_{i_1}\wedge\dots\wedge d\mu_{i_r}$ is a form of degree $r$
and $f\in \CS^{a,k}(\R^p)$ then we define the \emph{total degree} of 
$\go$ to be $r+a$. The exterior derivative preserves the total degree
and each $\go\in\Omega^\bullet\CS^{\bullet,\bullet}(\R^p)$
of total degree $a$
has an asymptotic expansion
\begin{equation}
     \go\sim \sum_{j=0}^\infty \go_{a-j}
\end{equation}
where $\go_{a-j}$ are forms of total degree $a-j$ which are $\log$--polyhomogeneous
in the sense of \eqref{ML-G2.2}, see \eqref{eq:classical}.
More concretely, if $f\in\CS^{a,k}(\R^p)$ then for
$\go=f\,d\mu_1\wedge\dots d\mu_r$ we have
\begin{equation}
     \go_{a+r-j}=f_{a-j}.
\end{equation}
Accordingly we define $\go_{a+r-j,l}:=f_{a-j,l}$.

Finally let $X=\sum_{j=1}^p \mu_j\frac{\pl}{\pl \mu_j}$ be the Liouville
vector field on $\R^p$.

After these preparations we put for $\go=fd\mu_1\wedge\dots\wedge d\mu_p\in
\Omega^p\CS^{\bullet,\bullet}(\R^p)$
\begin{equation}
    \res(\go):=\frac{1}{(2\pi)^p}\int_{S^{p-1}} i_X(\go_0)=
          \frac{1}{(2\pi)^p}\int_{S^{p-1}} f_{-p,0} d\vol_S.
\end{equation}    
On forms of degree $<p$ we put $\res(\go)=0$.

\begin{prop}\label{p:Stokes-property} 
If $f\in\C[\mu_1,\dots,\mu_p]$ is a polynomial then
\[\res(fd\mu_1\wedge\dots\wedge d\mu_p)=0.\]

If $\go\in\Omega^\bullet\CS^{a,0}(\R^p)$ then $\res(d\go)=0$.
\end{prop}
\noindent The second statement is due to \textsc{Manchon, Maeda} and \textsc{Paycha}
\cite{Manetal:SFC}.
\begin{proof}
For $f\in\C[\mu_1,\ldots,\mu_p]$ the component of 
homogeneity degree $0$ of
$fd\mu_1\wedge\dots\wedge d\mu_p$ is obviously $0$.

Using Cartan's identity we have
\begin{equation}
\begin{split}
     \res(d\go)&=\int_{S^{p-1}} i_X(d \go_0)=\int_{S^{p-1}} (i_Xd+d i_X)(\go_0)\\
	       &=\int_{S^{p-1}} \mathcal{L}_X \go_0 =0,
 \end{split}
\end{equation}
since the Lie derivative of a form of homogeneity degree $0$
with respect to the Liouville vector field $X$ is $0$.
\end{proof}

Composing the $\res$ functional with $\TR$ we obtain another trace
on the algebra $\CL^\bullet(M,E;\R^p)$ which despite 
of the previous Proposition is not closed. The point
here is that the range of $\TR$ is not contained
in $\CS^\bullet(\R^p)$ but rather in $\CS^{\bullet,1}(\R^p)$.

The significance of this functional and its relation to the noncommutative
residue is still to be clarified.


  
\newcommand{\rrr}{\!\!\upharpoonright\!} 
\newcommand{\sa}{\textup{sa}}
\newcommand{\half}{{1/2}}
\newcommand{\comp}{\operatorname{comp}}
\newcommand{\dstr}{d_{\textrm{str}}
}

\newcommand{\checknote}{\marginpar{check}}

\newcommand{\cl}{\textup{cl}}

\newcommand{\halfline}{[0,\infty)}
\newcommand{\frechet}{Fr{\'e}chet}

\section{Differential forms whose coefficients are symbol functions}
\label{s:DFC}

Proposition \plref{p:Stokes-property} says
that the $\res$ functional on $\Omega^\bullet\CS^{\bullet}(\R^n)$
descends to a linear functional on the $n$--th de Rham cohomology
of differential forms with coefficients in $\CS^{\bullet}(\R^n)$.
In \textsc{Paycha} \cite{Pay:NRC} it is shown that the space
of linear functionals on $\CS^{\bullet}(\R^n)$ having the Stokes property
is one--dimensional. From this statement in fact the uniqueness
of the residue trace can be derived. Translated into our terminology this
means that the dual of the $n$--th de Rham cohomology group of $\R^n$ with
coefficients in $\CS^{\bullet}(\R^n)$ is spanned by $\res$.
In particular the $n$-th de Rham cohomology group of $\R^n$ 
with coefficients in $\CS^{\bullet}(\R^n)$
is one--dimensional. In \cite{Pay:NRC} it is shown furthermore
that the uniqueness statement for linear functionals having the
Stokes property is basically equivalent to the uniqueness
statement for the residue trace.

We take up this theme and study in a rather general setting
the de Rham cohomology of differential forms whose coefficients 
are symbol functions. The
results announced here are inspired by 
\cite{Pay:NRC} but are more general. 
We pursue here an axiomatic approach. Details will appear elsewhere. 


\subsection{Differential forms with prescribed asymptotics}

\begin{dfn}\label{def-1.1} Let $\cA\subset C^\infty{\halfline}$ be a 
Fr{\'e}chet space with the following properties.

\begin{enumerate}
\item $\cinfz{\halfline}\subset \cA\subset \cinf{\halfline}$ are continuous embeddings. $\cinf{\halfline}$ carries
the usual Fr{\'e}chet topology of uniform convergence of all derivatives on compact sets and $\cinfz{\R}$
has the standard LF-space topology as inductive limit of the \frechet\ spaces $\bigsetdef{f\in\cinf{\halfline}}{
\supp f\subset [0,N]}$, $N\in\N$.

We denote by $\cA_0=\bigsetdef{f\in\cA}{\supp f\subset (0,\infty)}.$
\item The derivative $\pl:=\frac{d}{dx}$ maps $\cA$ into $\cA$.
\item There is a non--trivial linear functional $\reginttext:\cA\to \C$ with
the following properties:
\begin{enumerate}
\item The restriction of $\reginttext$ to $\cinfz{\halfline}$ is a multiple of the integral
$\int_0^\infty$. That is, there is a $\gl\in\C$ such that for $f\in\cinfz{\halfline}$
we have $\reginttext f=\gl\int_0^\infty f(x) dx$.
\item $\reginttext$ is \emph{closed} on $\cA_0$. That is, for $f\in\cA_0$ we have $\reginttext f=0$.
\item If $f\in \cA_0$ and $\reginttext f=0$ then the function $F:=\int_0^\bullet f\in\cA$. 
\end{enumerate}
\end{enumerate}
\end{dfn}
\begin{remark}
 It follows from (1) that 
if $\chi\in\cinf{\halfline}$ with $\chi(x)=1, x\ge x_0$ and $f\in\cA$ then
$\chi f\in \cA$ because $(1-\chi)f\in\cinfz{\halfline}\subset\cA$.

2. Since $\cA$ is \frechet\, it follows from (1) and (2) and the Closed Graph Theorem
that $\frac{d}{dx}:\cA\to\cA$ is continuous.

3. If $\gl$ in (3a) is nonzero we can renormalize $\reginttext$ such that $\gl=1$. Thus
we are left with two major cases: $\gl=1$ and $\gl=0$. In the first case $\reginttext$
is a regularization of the ordinary integral while in the second case $\reginttext$
is an analogue of the residue trace. This will be explained below in the examples. 
\end{remark}

\begin{example}
1. The Schwartz space $\cS(\R)$, $\reginttext=\int$.

2. Let $\CS^a(\halfline)$, $a\in\halfline$ be the classical symbols of order $a$. 
This space carries a natural \frechet\ topology.
If $a\not\in\{-1,0,1,\dots\}$ then let $\reginttext$ be the regularized
integral in the partie finie sense described in Subsection \plref{ss:partie-finie}.
This integral is continuous with respect to the \frechet\ topology
on $\CS^a(\halfline).$

If $a\in\{-1,0,1,\dots\}$ then let $\reginttext$ be the 
residue integral (cf. \eqref{eq:residue-density}),
i.e. if 
\begin{equation}
f(x)\sim_{x\to \infty}\sum_{j=0}^\infty f_{a-j} x^{a-j}
\end{equation}
then
\begin{equation}
  \regint f := f_{-1}.
\end{equation}

One can vary this example. With some care one can also deal with
$\log$--polyhomogeneous symbols. Moreover, there are classes of symbols
of integral order where the regularized integral has the Stokes property
\cite{Pay:NRC}. These ``odd class symbols'' also fit into the present
framework.
\end{example}

From now on $\cA$ will always denote a \frechet\ space as in
Def. \plref{def-1.1}.

Starting from $\cA$ we can construct associated spaces of functions
on $\R^n$ respectively on cones over a manifold.

Let $M$ be an oriented compact manifold. By $\cA_0(\halfline\times M)$
we denote the space of functions $f\in\cinf{\halfline\times M}$
such that
\begin{itemize}
\item There is an $\eps>0$ such that $f(r,p)=0$ for $r<\eps, p\in M$.
\item For fixed $p\in M$ we have $f(\cdot,p)\in\cA$.
\end{itemize}
Note that for $f\in\cA_0(\halfline\times M)$ the map
$M\to \cA, p\mapsto f(\cdot,p)$ is smooth. This follows
from the Closed Graph Theorem.
\details{Sketch: Since $\cA\hookrightarrow \cinf{\halfline}$
is continuously embedded and $M\ni p\mapsto f(\cdot,p)\in\cinf{\halfline}$
is smooth the claim follows.}

As a consequence we have a continuous integration along the fiber
\begin{equation}\label{eq:int-along-fiber}
  \regint_{(\halfline\times M)/M}:\cA_0(\halfline\times M)\longrightarrow
\cinf{M}, \quad f\mapsto \regint f(\cdot,p).
\end{equation}
We put
\begin{equation}
\cA_0(\R^n)=\bigsetdef{\pi^*f}{f\in \cA_0(\halfline\times S^{n-1}},
\end{equation}
where $\pi:\R^n\setminus\{0\}\longrightarrow \halfline\times S^{n-1}, x\mapsto (\|x\|, x/\|x\|)$
is the polar coordinate diffeomorphism.

Furthermore we put $\cA(\R^n):= \cinfz{\R^n}+\cA_0(\R^n)$. $\cA_0(\R^n)$ carries a natural LF-topology
while $\cA(\R^n)$ carries a natural \frechet\ topology.

\begin{remark} Composing the integral \eqref{eq:int-along-fiber} with an
  integral over $M$ yields a natural integral on $\cA_0(\halfline)\times
  M)$. In the case of $M=S^{n-1}$ and the standard integral on $S^{n-1}$
  this integral even extends to an integral
  on $\cA(\R^n)$ which has the Stokes property. 
  If $\cA=\CS^a([0,\infty))$ the so constructed integral on $\cA(\R^n)$
  is the Hadamard regularized
  integral if $a\not\in\{-1,0,1,\dots\}$ and the residue integral
  if $a\in\{-1,0,1,\dots\}$.
  Thus our approach allows us to discuss these two, a priori rather different,
  regularized integrals within one common framework.
\end{remark}

Finally we denote by $\Omega^k\cA_0(\halfline\times M)$ the space of differential forms
whose coefficients are locally in $\cA_0(\halfline\times U)$ for any chart $U\subset M$.
A more global description in terms of projective tensor products is also possible:
\begin{equation}
\cA_0(\halfline\times M)= \cA_0\otimes_\pi \cinf{M}, 
\end{equation}
respectively
\begin{equation}
\Omega^\bullet \cA_0(\halfline\times M)= (\cA_0\oplus \cA_0 dr)\otimes_\pi \Omega^\bullet(M). 
\end{equation}

By Def. \plref{def-1.1}, (2) the exterior derivative maps $\Omega^k\cA_{(0)}(X)$ to $\Omega^{k+1} \cA_{(0)}(X)$
for $X=\halfline\times M$, respectively $X=\R^n$. The corresponding cohomology groups are denoted by
$H^k \Omega^\bullet\cA_{(0)}(X)$. Our goal is to calculate these cohomology groups.

\begin{dfn} We call the $\cA$ of \emph{type I} if $\gl$ in Def.
  \plref{def-1.1} (3a) is $1$ and of \emph{type II} if $\gl$ is $0$.
\end{dfn}

\begin{lemma}\label{l:20081121-1} $\cA$ is of type II if and only if the constant function $1$
  is in $\cA$. Moreover we have for $k=0,1$
  \begin{equation}
      H^k\cA(\halfline)\simeq
      \begin{cases}0&, \text{ if } \cA \text{ is of type I,}\\
	           \C&, \text{ if } \cA \text{ is of type II.}
      \end{cases}
    \end{equation}
    $H^k\cA(\halfline)$ (obviously) vanishes for $k\ge 2$.
    Furthermore $\reginttext$ induces an isomorphism
    $H^1\cA_0(\halfline)\simeq \C$.
\end{lemma}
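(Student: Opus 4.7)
My plan is to first settle the equivalence "$\cA$ is of type II $\iff 1 \in \cA$", which then determines $H^0$, and afterwards attack $H^1$ by showing $\ker\reginttext \subset \partial\cA$ and analyzing when $\reginttext$ descends to a functional on $\cA/\partial\cA$. For the equivalence, fix $\psi \in \cinfz{\halfline}$ with $\int\psi = 1$ and $\supp\psi \subset [a,b]$, $0 < a < b$, so that $\psi \in \cA_0$. If $\cA$ is of type II then $\reginttext\psi = 0$, and (3c) provides a primitive $F := \int_0^\bullet \psi \in \cA$; since $F = 0$ on $[0,a]$ and $F = 1$ on $[b,\infty)$, the difference $F - 1$ is compactly supported, so $1 = F - (F - 1) \in \cA$. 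Conversely, if $1 \in \cA$ then the same $F$ lies in $\cA$ and, vanishing near $0$, in $\cA_0$; reading (3b) as the closedness statement "$\reginttext\,\partial g = 0$ for $g \in \cA_0$" (the interpretation consistent with both types being realized) and applying it to $F$ gives $\gl = \reginttext\psi = \reginttext\,\partial F = 0$. Given this, $H^0\cA(\halfline) = \ker(\partial : \cA \to \cA)$ is the space of constants in $\cA$, hence $0$ in type I and $\C$ in type II; vanishing in degrees $\ge 2$ is automatic on the one-dimensional $\halfline$.

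The key step for $H^1$ is the inclusion $\ker\reginttext \subset \partial\cA$. Given $f \in \cA$ with $\reginttext f = 0$, fix $\chi \in \cinfz{\halfline}$ with $\chi \equiv 1$ near $0$, set $a := \int\chi f$, and decompose
\[
    f = \underbrace{(\chi f - a\psi)}_{\in\,\cinfz{\halfline},\ \int = 0} \;+\; \underbrace{\bigl((1-\chi)f + a\psi\bigr)}_{\in\,\cA_0,\ \reginttext = 0}.
\]
The first summand has a primitive in $\cinfz{\halfline} \subset \cA$; for the second, $\reginttext\bigl((1-\chi)f + a\psi\bigr) = \reginttext f - \gl a + \gl a = 0$, so (3c) supplies a primitive in $\cA$. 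To separate the two types I would then derive the Stokes boundary formula $\reginttext\,\partial g = -\gl\, g(0)$ for general $g \in \cA$, by splitting $g = \chi g + (1-\chi)g$ and combining $\reginttext = \gl\int$ on $\cinfz{\halfline}$ with (3b) on the $\cA_0$-part. In type II this vanishes identically, so $\reginttext$ descends to a nonzero functional on $\cA/\partial\cA$ which, combined with the key inclusion, is injective; hence $H^1\cA(\halfline) \cong \C$. In type I, any $g \in \cinfz{\halfline}$ with $g(0) \neq 0$ produces $\partial g \in \partial\cA$ with nonzero $\reginttext$, so together with $\ker\reginttext \subset \partial\cA$ this forces $\partial\cA = \cA$ and thus $H^1\cA(\halfline) = 0$.

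For $H^1\cA_0(\halfline)$, property (3b) makes the induced map $\reginttext : \cA_0/\partial\cA_0 \to \C$ well-defined. Surjectivity holds in both types: in type I use $\psi$, and in type II use $(1-\chi)f$ for any $f \in \cA$ with $\reginttext f \neq 0$ (which exists by non-triviality of $\reginttext$). Injectivity is the $\cA_0$-variant of the key lemma: if $f \in \cA_0$ satisfies $\reginttext f = 0$, then (3c) produces $F = \int_0^\bullet f \in \cA$, and $F$ inherits support in $(0,\infty)$ from $f$, so $F \in \cA_0$ and $f = \partial F \in \partial\cA_0$. The main technical obstacle is the Stokes identity $\reginttext\,\partial g = -\gl g(0)$: this single boundary formula drives the type I/type II dichotomy in both degrees, and it depends crucially on reading (3b) as the closedness of $\reginttext$ on the sub-complex $\cA_0$.
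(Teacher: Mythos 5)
Your proof is correct. The paper states this lemma without proof (the author announces that details will appear elsewhere), so there is no argument in the source to compare against; the route you take is the natural one. Two points worth recording. First, you correctly read Definition~\plref{def-1.1}(3b) as the closedness statement ``$\reginttext\,\partial f=0$ for $f\in\cA_0$'' rather than the literal ``$\reginttext f=0$ for $f\in\cA_0$'': the latter, applied to any $\psi\in\cinfz{\halfline}$ supported in $(0,\infty)$ with $\int\psi\neq 0$, would force $\gl=0$ unconditionally, making type~I empty and contradicting both the dichotomy claimed here and the Schwartz-space example. Second, the half-line Stokes formula $\reginttext\,\partial g=-\gl\,g(0)$ that you derive is the real content: it explains at once why $\reginttext$ descends to $\cA/\partial\cA$ exactly in type~II, and---combined with the inclusion $\ker\reginttext\subset\partial\cA$ coming from your splitting $f=(\chi f-a\psi)+\bigl((1-\chi)f+a\psi\bigr)$---why $\partial\cA=\cA$ in type~I, since a codimension-one subspace cannot be a proper subspace of another proper subspace. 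The same device, a regularized integral whose kernel sits inside the image of $\partial$ and which is integrated back via (3c), is what drives the Thom isomorphism of Theorem~\plref{thm:Thom-isom} in one degree higher, so your argument fits the paper's intended mechanism.
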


\subsection{Integration along the fiber and statement of the main result}
\label{ss:int-fiber}

\subsubsection{Integration along the fiber}\label{sec-2.1}

The integration \eqref{eq:int-along-fiber} extends to an integration along the
fiber of differential forms as follows (cf. \cite{BotTu:DFA}):

A $k$--form $\go\in\Omega^k\cA_0(\halfline\times M)$ is, locally on $M$, a sum
of differential forms of the form
\begin{equation}\label{eq:special-forms}
    \go= f_1(r,p)\pi^*\eta_1+f_2(r,p) \pi^*\eta_2\wedge dr
\end{equation}
with $f_j\in\cA_0(\halfline\times M), \eta_1\in \Omega^k(M), \eta_2\in\Omega^{k-1}(M)$.
For such forms we put
\begin{equation}
\pi_*\go:= \Bigl( \regint_{(\halfline\times M)/M} f_2\Bigr)\pi^*\eta_2.
\end{equation}

\begin{lemma} $\pi_*$ extends to a well--defined homomorphism
\[\Omega^k \cA_0(\halfline\times M)\longrightarrow 
\Omega^{k-1}\cA_0(\halfline\times M).\]
Furthermore, $\pi_*$ commutes with exterior differentiation, i.e.
\[d_M\circ \pi_* = \pi_*\circ d_{\R_+\times M}.\]
\end{lemma}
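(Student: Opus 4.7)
The plan is to verify both assertions by unwinding the definitions and reducing everything to two inputs: the closedness axiom (3b) of Definition \plref{def-1.1}, and continuity of $\reginttext$ on the Fr\'echet space $\cA$, which is what ultimately legitimises differentiation under the integral.

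First I would check well-definedness. Any form $\go\in\Omega^k\cA_0(\halfline\times M)$ admits a canonical decomposition $\go=\alpha+\beta\wedge dr$ in which, in local coordinates $(r,x_1,\ldots,x_m)$ on $\halfline\times M$, the forms $\alpha$ and $\beta$ involve only the $dx_i$'s. A local sum of the form \eqref{eq:special-forms} is merely a parametrisation of this decomposition, so $\pi_*\go$ depends only on $\beta$; multilinearity of $\reginttext$ together with a partition of unity patches the construction globally. For the coefficients of $\pi_*\go$ to be smooth I need that $p\mapsto\regint f(\cdot,p)$ is smooth for $f\in\cA_0(\halfline\times M)$: this follows from the already noted smoothness of the map $p\mapsto f(\cdot,p)\in\cA$ combined with continuity of $\reginttext\colon\cA\to\C$, which holds in all motivating examples and may be taken as a standing hypothesis (it also follows from the Closed Graph Theorem once one notes that $\reginttext$ is continuous on the dense subspace $\cinfz{\halfline}$).

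The substance lies in the second assertion. For a representative summand
\[
\go = g\,\pi^*\eta + h\,\pi^*\zeta\wedge dr,\qquad g,h\in\cA_0(\halfline\times M),\ \eta\in\Omega^k(M),\ \zeta\in\Omega^{k-1}(M),
\]
a direct expansion, splitting $dg=\pl_r g\,dr+d_M g$ and similarly for $h$, and using $dr\wedge dr=0$, yields
\[
d\go = \bigl(d_M g\wedge\pi^*\eta+g\,\pi^*d\eta\bigr) + \bigl((-1)^k\pl_r g\,\pi^*\eta + d_M h\wedge\pi^*\zeta + h\,\pi^*d\zeta\bigr)\wedge dr,
\]
where $d_M$ denotes fibrewise differentiation in the $M$-directions. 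Applying $\pi_*$ kills the $dr$-free summand and integrates the coefficient of $dr$ fibrewise. The $\pl_r g$-term contributes $(-1)^k(\regint\pl_r g)\,\eta$ and vanishes by the closedness axiom, since $g(\cdot,p)\in\cA_0$. Differentiation under the integral, justified by the continuity of $\reginttext$, converts $\regint d_M h$ into $d_M\regint h$. The surviving terms therefore assemble to $d_M(\regint h)\wedge\zeta+(\regint h)\,d\zeta = d_M\bigl[(\regint h)\,\zeta\bigr] = d_M(\pi_*\go)$.

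The only step that genuinely exploits the special axioms of $\cA$ is the vanishing $\regint\pl_r g=0$; everything else is either multilinear bookkeeping or a routine justification of differentiation under the integral in a Fr\'echet-space setting. The main potential pitfall is therefore purely notational: one must keep track of the various signs produced by moving $dr$ past forms of different degrees, and make sure the differentiation-under-the-integral estimate respects the topology of $\cA$ uniformly on compact subsets of $M$.
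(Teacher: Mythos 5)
The paper itself gives no proof of this lemma beyond the single sentence ``For the proof of this Lemma the closedness of $\reginttext$ is crucial,'' so your write-up is supplying the details rather than competing with a printed argument. Your computation is correct and pinpoints exactly the two ingredients the author had in mind: the term $(-1)^k(\regint\pl_r g)\,\eta$ dies by axiom (3b) because $g(\cdot,p)\in\cA_0$, and $\regint d_M h = d_M\regint h$ converts the remaining fibre integral into $d_M\bigl[(\regint h)\zeta\bigr]$. The sign bookkeeping ($dr\wedge\pi^*\eta=(-1)^k\pi^*\eta\wedge dr$, and the vanishing of the $\pl_r h\,dr\wedge\pi^*\zeta\wedge dr$ term) is right, and the reduction to the local representatives of \eqref{eq:special-forms} plus a partition of unity is the standard Bott--Tu argument the paper implicitly invokes.

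One small caveat: your parenthetical claim that continuity of $\reginttext$ on $\cA$ ``follows from the Closed Graph Theorem once one notes that $\reginttext$ is continuous on the dense subspace $\cinfz{\halfline}$'' is not a valid deduction — continuity on a dense subspace does not by itself force the graph of a linear functional on the whole space to be closed. Continuity of $\reginttext$ is better treated as part of Definition \plref{def-1.1} (the paper in effect assumes it when asserting that $\regint_{(\halfline\times M)/M}$ lands in $\cinf{M}$ continuously, and it holds in both model examples). With that hypothesis made explicit, the differentiation under the integral and the smoothness of $p\mapsto\regint f(\cdot,p)$ are indeed routine, and the rest of your proof stands.
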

For the proof of this Lemma the closedness of $\reginttext$ is crucial.

\subsubsection{Statement of the main result}\label{sec-2.2}

We are now able to state our main result:

\begin{theorem} \label{thm:1}
  \textup{Type I:} If $\cA$ is of type I then the natural
  inclusion $\Omega_c^\bullet(\R^n)\hookrightarrow \Omega^\bullet\cA(\R^n)$
  of compactly supported forms induces an isomorphism in cohomology.
  
  \textup{Type II:} If $\cA$ is of type II then
  \begin{equation}
    H^k\cA(\R^n)\simeq\begin{cases} \C,& k=0,1,n,\\
                                    0,&\text{otherwise.}
         		  \end{cases}
  \end{equation}
  In both cases $\reginttext$ induces an isomorphism 
$H^n\cA(\R^n)\longrightarrow\C$.
\end{theorem}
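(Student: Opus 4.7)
My strategy is to run a Mayer--Vietoris argument based on the decomposition $\cA(\R^n)=\cinfz{\R^n}+\cA_0(\R^n)$, which yields the short exact sequence of complexes
\begin{equation*}
0\longrightarrow \Omega_c^\bullet(\R^n\setminus\{0\})\xrightarrow{\omega\mapsto(\omega,\omega)}\Omega_c^\bullet(\R^n)\oplus\Omega^\bullet\cA_0(\R^n)\xrightarrow{(\alpha,\beta)\mapsto\alpha-\beta}\Omega^\bullet\cA(\R^n)\longrightarrow 0,
\end{equation*}
whose kernel on the left is $\cinfz{\R^n}\cap\cA_0(\R^n)=\cinfz{\R^n\setminus\{0\}}$ and whose surjectivity on the right is the very definition of $\cA(\R^n)$. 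The resulting long exact sequence couples the unknown $H^\bullet\cA(\R^n)$ to the classical groups $H^\bullet_c(\R^n)$ (concentrated in degree $n$) and $H^\bullet_c(\R^n\setminus\{0\})\cong H^{n-\bullet}(S^{n-1})$, together with $H^\bullet\cA_0(\R^n)$.

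The key preliminary computation is a Thom-type isomorphism
\begin{equation*}
\pi_*\colon H^k\cA_0(\halfline\times M)\xrightarrow{\sim}H^{k-1}(M)
\end{equation*}
for every closed oriented $M$, with $\pi_*$ the fibre integration of Subsection \plref{sec-2.1}. For $M$ a point this is exactly Lemma \plref{l:20081121-1}. For general $M$, I would prove it by Mayer--Vietoris induction over a finite good cover: naturality of $\pi_*$ compares the Mayer--Vietoris sequences on both sides, and the five lemma propagates the isomorphism from contractible pieces. Alternatively one can construct an explicit homotopy inverse by choosing $\phi\in\cA_0$ with $\regint\phi=1$ (which exists in both types, type~II by truncating any $\cA$-function with nonzero $\regint$) and setting $\iota(\eta):=\phi(r)\,dr\wedge\pi^*\eta$; property (3c) of Definition \plref{def-1.1} then produces the radial primitives needed to homotope $\iota\pi_*$ to the identity. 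Specialising to $M=S^{n-1}$ via polar coordinates yields $H^k\cA_0(\R^n)\cong H^{k-1}(S^{n-1})$, which is $\C$ for $k\in\{1,n\}$ (for $n\ge2$) and zero otherwise.

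With these ingredients I would chase the long exact sequence degree by degree, splitting by type. In \emph{type~I}, Lemma \plref{l:20081121-1} gives $1\notin\cA$, so $H^0\cA(\R^n)=0$; the inclusion $H^1_c(\R^n\setminus\{0\})\to H^1\cA_0(\R^n)$ is an isomorphism, since on both sides the radial generator $\chi'(|x|)\,d|x|$ is detected by the pairing $\regint=\int$ along a ray; $H^k\cA(\R^n)$ vanishes for $1\le k\le n-1$; and the inclusion at degree $n$ sends the fundamental class to the diagonal $(1,1)\in H^n_c(\R^n)\oplus H^n\cA_0(\R^n)\cong\C\oplus\C$, with cokernel $\C=H^n\cA(\R^n)$. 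The resulting groups coincide with $H^\bullet_c(\R^n)$, and naturality of the diagram shows that the canonical inclusion realises the isomorphism. In \emph{type~II}, $1\in\cA$ so $H^0\cA(\R^n)=\C$; writing $1=(1-\chi(|x|))+\chi(|x|)$ and applying the snake lemma shows that the connecting map sends $[1]$ to the nontrivial class $[-\chi'(|x|)\,d|x|]$ in $H^1_c(\R^n\setminus\{0\})$, so is an isomorphism. The subsequent inclusion $H^1_c(\R^n\setminus\{0\})\to H^1\cA_0(\R^n)$ is now the zero map, because $\chi(|x|)$ itself lies in $\cA_0$ and so provides an $\cA_0$-primitive of the generator, forcing $H^1\cA(\R^n)\cong H^1\cA_0(\R^n)=\C$. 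At the top degree the inclusion sends a representative $\omega_0$ with $\int_{\R^n}\omega_0=1$ to $(1,\regint\omega_0)=(1,0)$, since $\regint$ vanishes on compactly supported forms in type~II, so once more the cokernel is $\C=H^n\cA(\R^n)$. In both cases, tracking representatives shows that $\regint_{\R^n}$ realises the top-degree isomorphism onto $\C$.

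The main obstacle is the Thom isomorphism with $\cA_0$-coefficients. Naturality of $\pi_*$ in $M$ is formal, but the Mayer--Vietoris induction requires the short exact sequence of $\cA_0$-valued complexes produced by a smooth partition of unity on $M$ to be exact in the appropriate topological category, and the homotopy-inverse construction genuinely uses property (3c) to produce $\cA_0$-primitives for $\cA_0$-functions of vanishing $\regint$. Both steps rely on the full functional-analytic package in Definition \plref{def-1.1}: the Fréchet topology of $\cA$, the continuous embeddings $\cinfz{\halfline}\subset\cA\subset\cinf{\halfline}$, and the Closed Graph Theorem. Once the Thom isomorphism is in hand, the remaining case analysis and the identification of $\regint_{\R^n}$ as the top-degree trace are essentially formal diagram chases.
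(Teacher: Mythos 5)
Your reduction to the Thom isomorphism $\pi_*\colon H^k\cA_0(\halfline\times M)\to H^{k-1}_{\textup{dR}}(M)$ is exactly the strategy the paper announces --- the remark following the theorem points to Theorem~\plref{thm:Thom-isom} as the basis of the proof --- and your second alternative for proving that isomorphism (the section $s_*\eta:=\phi(r)\,dr\wedge\pi^*\eta$ with $\phi\in\cA_0$, $\regint\phi=1$, available in both types by the truncation trick you describe, together with a radial homotopy operator built from the primitive property~(3c) of Definition~\plref{def-1.1}) is precisely the construction the paper carries out. Since the paper defers the deduction of Theorem~\plref{thm:1} from the Thom isomorphism, your Mayer--Vietoris short exact sequence
\begin{equation*}
0\longrightarrow\Omega_c^\bullet(\R^n\setminus\{0\})\longrightarrow\Omega_c^\bullet(\R^n)\oplus\Omega^\bullet\cA_0(\R^n)\longrightarrow\Omega^\bullet\cA(\R^n)\longrightarrow0
\end{equation*}
is a clean way to supply it, and your degree-by-degree chase is correct: the type dichotomy is governed entirely by whether the cutoff $\chi(|x|)$ lies in $\cA_0(\R^n)$ (equivalently $1\in\cA$), which decides whether the connecting map $H^0\cA(\R^n)\to H^1_c(\R^n\setminus\{0\})$ and the inclusion $H^1_c(\R^n\setminus\{0\})\to H^1\cA_0(\R^n)$ are isomorphisms or vanish, and the diagonal image at top degree is $(1,1)$ in Type~I but $(1,0)$ in Type~II because $\regint$ kills compactly supported forms there. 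Two minor points. First, the argument (and the theorem as stated) implicitly assumes $n\ge 2$: for $n=1$ one has $H^0(S^0)=\C^2$, and the same chase gives $H^1\cA(\R)\cong\C^2$ in Type~II. Second, note that your computation actually shows the paper's subsequent Remark, which asserts $H^k\cA_0(\R^n)\to H^k\cA(\R^n)$ is an isomorphism for all $k\ge1$, fails at $k=1$ in Type~I, where $H^1\cA_0(\R^n)=\C$ while $H^1\cA(\R^n)=0$; the correct range there is $k\ge 2$.
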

\begin{remark}
1. The groups $H^k\cA(\R^n)$ can be described more explicitly.
Namely, the natural inclusion $\Omega^\bullet\cA_0(\R^n)\hookrightarrow
\Omega^\bullet\cA(\R^n)$ induces isomorphisms
\[H^k\cA_0(\R^n)\longrightarrow H^k\cA(\R^n)\]
for $k\ge 1$. Furthermore, integration along the fiber
induces isomorphisms 
\[\pi_*: H^k\cA_0(\R^n)\longrightarrow H^{k-1}(S^{n-1}), \quad \text{for }
k\ge 1.\]

Thus there is a natural extension of integration along the fiber
to closed forms $\pi_*:\Omega_{\cl}^k\cA(\R^n)\to\Omega^{k-1}(S^{n-1})$.
The isomorphisms 
$H^k\cA_0(\R^n)\longrightarrow \C,\quad k=1,n$
are given by integration along the fiber. 

2. This Theorem generalizes the results of \cite[Sec. 1]{Pay:NRC}
on the characterization of the residue integral and the regularized
integral in terms of the Stokes property. 

3. The proof of the Theorem is based on the Thom isomorphism below.
\end{remark}

\details{
\details{Textbaustein, existence of $\pi_*$ on $\Omega\cA(\R^n)$ is an issue.

integration along the fiber yields an isomorphism $\pi_*: H^k\cA(\R^n)\longrightarrow H^{k-1}(S^{n-1})$.
Thus if $k=1$ and $\go\in\Omega^1\cA(\R^n), d\go=0$ then $\pi_*\go=\gl_\go \cdot 1$ and
$[\go]\mapsto \gl_\go$ is the isomorphism $H^1\cA(\R^n)\longrightarrow \C$.

If $k=n$ then the canonical isomorphism $H^n\cA(\R^n)\longrightarrow\C$ is given by total
integration
\begin{equation}
H^n\cA(\R^n)\ni [\go]\mapsto \int_{S^{n-1}} \pi_*\go.
\end{equation}
}

\commentary{The proof of the Theorem will be based on the Thom isomorphism presented in Sec. ???}

\begin{prop} Let $\cA$ be as in Def. \plref{def-1.1}. Then integration along the fiber
as defined in ???? extends to a homomorphism $\pi_*:\Omega^k\cA(\R^n)\to \Omega^{k-1}(\R^{n-1})$
which commutes with exterior differentiation.
\end{prop}
\begin{proof}
\end{proof}
}

\subsubsection{The Thom isomorphism}

We consider again a Fr{\'e}chet space $\cA$ as in Def. \plref{def-1.1}. Having established
integration along the fiber the Thom isomorphism is proved along the lines of the classical
case of smooth compactly supported forms. The result is as follows:

\begin{theorem}\label{thm:Thom-isom} Let $\cA$ be a \frechet\ algebra as in Def. \plref{def-1.1}. Let $M$ be a compact
oriented manifold of dimension $n$. Furthermore let 
\[\pi_*:\Omega^k\cA_0(\halfline\times M)\longrightarrow
\Omega^{k-1}(\halfline\times M)\]
be integration along the fiber as defined in Section \plref{sec-2.1}.

Then $\pi_*$ induces an isomorphism
\begin{equation}\label{eq:Thom-isom}
  H^k\cA_0(\halfline\times M)\longrightarrow H^{k-1}_{\textup{dR}}(M)
\end{equation}
for all $k\ge 0$ (meaning $H^0\cA_0(\halfline\times M)\simeq \{0\}$.)
\end{theorem}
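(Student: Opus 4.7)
The strategy is to imitate the Bott--Tu proof of the Poincar{\'e} lemma for compactly supported forms on a trivial line bundle (see \cite{BotTu:DFA}), with the space $\cinfz{\halfline}$ replaced by $\cA_0$. Concretely, I would build an explicit cochain-level quasi-inverse $e_*$ to $\pi_*$ and a chain homotopy $K$ with $dK + Kd = \mathrm{id} - e_*\pi_*$.

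\emph{Step 1 (Thom class and section).} By Lemma~\ref{l:20081121-1}, $\reginttext$ induces an isomorphism $H^1\cA_0(\halfline) \simeq \C$, so one can pick $e \in \cA_0$ with $\regint e = 1$. Then $\Phi := e(r)\,dr \in \Omega^1 \cA_0(\halfline)$ is automatically closed, and I define
\[
 e_* : \Omega^{k-1}(M) \longrightarrow \Omega^k \cA_0(\halfline \times M), \qquad e_*\eta := \pi^*\eta \wedge \Phi.
\]
Since $d\Phi = 0$ one checks $d\circ e_* = e_*\circ d$, and $\pi_* e_* = \mathrm{id}$ by the definition of fiber integration (the factor $\regint e$ comes out as $1$).

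\emph{Step 2 (Chain homotopy).} For $\go \in \Omega^k \cA_0(\halfline \times M)$ written locally as $\go = f_1\pi^*\eta_1 + f_2\pi^*\eta_2 \wedge dr$ with $|\eta_1|=k$, $|\eta_2|=k-1$, set $K(f_1\pi^*\eta_1) := 0$ and
\[
 K(f_2\pi^*\eta_2 \wedge dr) := (-1)^{|\eta_2|} \pi^*\eta_2 \cdot G_{f_2},
\]
where, for each fixed $p \in M$,
\[
 G_{f_2}(r,p) := \int_0^r \Bigl(f_2(s,p) - e(s)\cdot(\pi_*(f_2\,dr))(p)\Bigr)\,ds.
\]
The integrand lies in $\cA_0$ in the $s$-variable and its $\reginttext$ is zero by construction, so by Def.~\ref{def-1.1}~(3c) its primitive $G_{f_2}(\cdot,p)$ lies in $\cA$; since the integrand is supported away from $0$, actually $G_{f_2}(\cdot,p) \in \cA_0$. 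Smoothness and \frechet-continuity in $p$ follow from the continuity of $\reginttext$ and the Closed Graph Theorem (as already used for the construction of $\pi_*$).

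\emph{Step 3 (Verification and conclusion).} A direct calculation, using $\partial_r G_{f_2} = f_2 - e(r)\pi_*(f_2\,dr)$ and the fact that $d_M$ commutes with both $\reginttext$ and with the primitive $f\mapsto \int_0^\bullet f\,ds$, yields $dK\go_{II} + Kd\go_{II} = \go_{II} - e_*\pi_*\go_{II}$ for the $dr$-type term. For the term $\go_I = f_1\pi^*\eta_1$ one has $e_*\pi_*\go_I = 0$, and $Kd\go_I$ reduces to $\pm\pi^*\eta_1 \cdot \int_0^r \partial_s f_1(s,p)\,ds = \pm f_1\pi^*\eta_1$ once one uses Def.~\ref{def-1.1}~(3b) (closedness of $\reginttext$, i.e.\ $\regint \partial_s f_1(\cdot,p) = 0$ for $f_1(\cdot,p) \in \cA_0$) to kill the correction $e\cdot\regint\partial_s f_1$; together with $f_1(0,p)=0$ this gives $dK\go_I + Kd\go_I = \go_I - e_*\pi_*\go_I$. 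Hence $\pi_*$ and $e_*$ are mutually inverse on cohomology in every degree $k\ge 1$. For $k=0$ a closed element of $\cA_0(\halfline\times M)$ is locally constant in $r$ but vanishes for $r$ near $0$, so $H^0\cA_0(\halfline\times M)=0$, matching $H^{-1}(M)=0$.

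\emph{Main obstacle.} The only non-formal point is that $K$ really takes values in $\Omega^{\bullet-1}\cA_0(\halfline\times M)$; this rests entirely on the axiomatic input (3b)--(3c) of Def.~\ref{def-1.1}, which was tailored precisely to make the ``subtract-the-mean antiderivative'' construction work in $\cA_0$. Once this is in place, the verification of the homotopy identity is parallel to the classical Bott--Tu computation.
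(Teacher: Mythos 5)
Your proof is correct and follows essentially the same route as the paper's own argument: choose $\phi=e\in\cA_0$ with $\regint e=1$, define the section $s_*=e_*$ by wedging with $e(r)\,dr$, and build the chain homotopy $K$ via the ``subtract-the-mean antiderivative'' that lands in $\cA_0$ by axioms (3b)--(3c) of Def.~\ref{def-1.1}. The only differences are cosmetic (sign conventions, naming).
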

\details{
\begin{proof}
As in the proof of the Thom isomorphism for compactly supported smooth forms we construct a right inverse
of $\pi_*$ and an appropriate homotopy operator. \TODO{refer to Bott/Tu}
In detail. By Def. \plref{def-1.1} (3) there is a $\phi\in\cA_0$ with $\regint \phi=1$.
We then put
\begin{equation}
\begin{split}
     s_*:\Omega^{k-1}(M)&\longrightarrow \Omega^k\cA_0(\halfline\times M)\\
                     \eta&\mapsto \phi(r) \pi^*\eta\wedge dr.
\end{split}
\end{equation}
Obviously, $s_*$ commutes with $d$ and 
\begin{equation}
\pi_*\circ s_*= \id.
\end{equation}
Next we define a homotopy operator
\begin{equation}\label{eq:homotopy-operator}
K:\Omega^k\cA_0(\halfline\times M)\longrightarrow \Omega^{k-1}\cA_0(\halfline\times M)
\end{equation}
as follows. For a $k$--form $\go$ of the form  \eqref{eq:special-forms}
we put
\begin{equation}\label{eq:homotopy-operator-a}
  K\go:= (-1)^{k-1}\int_0^r\Bigl(f_2(s,p) -\bigl(\regint_0^\infty f_2(\cdot,p)\bigr)\phi(s)\Bigr)ds\; \pi^*\eta_2.
\end{equation}
Note that by construction 
$\reginttext_0^\infty\Bigl(f_2(s,p) -\bigl(\regint_0^\infty f_2(\cdot,p)\bigr)\phi(s)\Bigr)ds=0$ and
thus
$r\mapsto \int_0^r\Bigl(f_2(s,p) -\bigl(\regint_0^\infty f_2(\cdot,p)\bigr)\phi(s)\Bigr)ds$
is in $\cA_0$ by Def. \plref{def-1.1} (4).

Extending $K$ by linearity gives indeed a well--defined homomorphism as claimed in Eq. \eqref{eq:homotopy-operator}.
Exploiting the fact that $\reginttext$ is closed a straightforward calculation yields
\begin{equation}
   dK+Kd=\id-s_*\pi_*.
\end{equation} ya
Thus $K$ is a homotopy operator showing that at the level of cohomology $s_*$ is
the inverse of $\pi_*$. The Theorem is proved.
\end{proof}
}

\bibliography{lesch}
\bibliographystyle{amsalpha-lmp}
\end{document}